\newcommand\CorrespondingAuthor[1]{%
  \begingroup%
  \def\@makefnmark{}%
  \footnotetext{Corresponding author: #1}%
  \endgroup%
}
\renewenvironment{abstract}{%
  \small%
  \providecommand\keywords{%
    \par\medskip\noindent\textit{Keywords:}\xspace}%
  \begin{center}%
    \bfseries \abstractname\vspace{-.5em}\vspace{\z@}%
  \end{center}%
  \quote%
}{\endquote}
\setlist{
  listparindent=\parindent,
  parsep=0pt,
}
\newtheorem{thm}{Theorem}[section]
\newtheorem{cor}[thm]{Corollary}
\newtheorem{lem}[thm]{Lemma}
\newtheorem{defi}[thm]{Definition}
\newtheorem{prop}[thm]{Proposition}
\theoremstyle{definition}
\theoremstyle{remark} 
\newtheorem{rem}[thm]{Remark}
\def\mc{\mathcal}
\DeclareMathOperator{\Cov}{Cov}
\DeclareMathOperator{\Var}{\mathsf{Var}}
\newcommand{\X}{\mathcal{X}}
\newcommand{\E}{\mathbb{E}}
\def\P{\mathbb P}
\newcommand{\Ha}{\mathcal{H}}
\newcommand{\PP}{\mathcal{P}}
\def\PPn{\mc P_n}
\newcommand{\N}{\mathbb{N}}
\newcommand{\Z}{\mathbb{Z}}
\newcommand{\R}{\mathbb{R}}
\newcommand{\eps}{\varepsilon}
\def\d{{\mathrm{d}}}
\def\xx{\boldsymbol x}
\def\dist{\mathsf{dist}}
\def\bR{\breve{\mathbb R}}
\def\bWn{\breve W_n}
\def\bxx{\breve{\xx}}
\def\bx{\breve x}
\def\bPPn{\breve{\mc P_n}}
\def\doublespacing{\renewcommand{\baselinestretch}{1.5}\large\normalsize}
\begin{document}
\doublespacing

\title{A functional central limit theorem for the empirical
Ripley's $K$-function}	
\author[1]{Christophe Ange Napol\'eon Biscio}
\author[1]{Anne Marie Svane}
\affil[1]{Department of Mathematical Sciences,
Aalborg University, Skjernvej 4A, DK-9220 Aalborg, Denmark}

\date{}
\maketitle

\begin{abstract}
	We establish a functional central limit theorem for Ripley's $K$-function for two classes of point processes. One is the class of point processes having exponential
	decay of correlations and further 
	satisfying a conditional m-dependence condition. The other is a
	family of Gibbs point processes. We illustrate the use of our
	theorem for goodness-of-fit tests in simulations.


	\keywords point processes, Ripley's $K$-function, functional
	central limit theorem, goodness-of-fit test 
\end{abstract}

\section{Introduction}	

Ripley's $K$-function \citep{ripley2} is a classical way of
summarizing the second order structure of a stationary point
process in spatial statistics. For a given $r>0$, $K(r)$
is defined to be the expected number of points in a ball of
radius $r$ around a typical point of the point process. Apart
from being geometrically intuitive, an important reason for
studying the $K$-function is that for isotropic point processes
it contains all information about the pair correlation function,
see e.g.~\cite{chiu}.
Consequently, the Ripley's $K$ function is commonly used for
statistical inference in spatial statistics for point processes,
see e.g.~\cite{Baddeley:Rubak:Wolf:15}.

A common problem in this field is to assess how well an observed
point pattern $\xx$ fits an assumed model $\PP_0$ based on a
chosen summary statistics, e.g. the Ripley's $K$ function
and the pair correlation function. A standard way to perform
goodness-of-fit tests is to use confidence bands derived
from central limit theorems of  the chosen summary
statistic under $\PP_0$ . However, in the case of the Ripley's $K$ function,
such theorems have only been established when $\PP_0$ is a
Poisson point process with known or unknown intensity,
see~\cite{heinrich91}. When no confidence bands are available,
pointwise envelope tests, see e.g.~\cite{Baddeley:Rubak:Wolf:15},
and their extension to global envelope tests have been a popular
alternative~\citep{globalenv17}. However, pointwise envelope
tests are restricted to comparison of the summary statistics
evaluated at a given argument while global envelope tests require
several thousands of simulations of point patterns following
$\PP_0$ with on average the same number of points as in the
observed pattern $\xx$. This allows for goodness-of-fit test of
more models than the Poisson point process but may be infeasible
when the number of observed points is large.

Thus, establishing central limit theorems for the  $K$-function
of $\PP_0$ when it is not necessarily a Poisson point process
would allow for goodness-of-fit tests which are not based on
envelope methods as in~\cite{heinrich91}. In
Theorem~\ref{mainthm} of the present paper, we provide a first
necessary step in that direction by establishing central limit
theorems for Ripley's $K$-function when $\PP_0$ does not depend
of unknown parameters. However, a problem arises when  $\PP_0$
belongs to a parametric family of models that contains several
unknown parameters, e.g. the intensity as described
in~\cite{Baddeley:Rubak:Wolf:15}. As observed in
\cite{heinrich91} for Poisson processes, replacing the intensity
by an estimate changes the limiting distribution. For more
general point process models, generalising Theorem~\ref{mainthm}
in the presence of unknown parameters appears to be challenging
and require a further development on our main result
in Theorem~\ref{mainthm}.  We thus leave this open for future
works.
To the best of the authors' knowledge, the only previous
contribution to a functional central limit theorem for the
$K$-function is~\cite{heinrich91,kclt} where the framework is
limited to stationary Poisson processes. Other recent results on
functional central limit theorems for geometric summary
statistics are given in \cite{hirsch,owada21}.

The key to our results is to rewrite the estimator for Ripley's $K$-function  in the form
\begin{equation*}
\sum_{x\in \PP} \xi (x,\PP \cap W_n),
\end{equation*}
where $\PP$ is the point process, $W_n$ is an observation window,
and $\xi$ is a so-called score function. There has been a lot of
recent activity to establish the asymptotic behaviour of  summary
statistics of this form \citep{yogesh,gibbsCLT} when either  $\PP$
has fast decay of correlations or it belongs to a suitable class
of Gibbs processes. This immediately leads to point-wise laws of
large numbers for the mean and variance and, when a certain
variance lower bound is satisfied, these papers also provide
point-wise central limit theorems. This variance bound can be
shown for the $K$-function by applying techniques of
\cite{bchs20,gibbsCLT}.  To obtain a functional central limit
theorem, tightness is shown by applying a machinery developed in
\cite{bchs20} for persistence diagrams and adapting it to Gibbs
point processes.

The paper is structured as follows. In Section \ref{sec:K} we
introduce the $K$-function and its most common edge corrected estimators. In Section \ref{sec:classes} we introduce the
classes of point processes to which our results apply. The main
results for the $K$-function are stated in Section
\ref{sec:mainresults} and corollaries for related functionals are
given in Section \ref{sec:corollaries}. We investigate the
statistical performance of a test based on the central limit
theorem in Section \ref{sec:gof} before proving the main results
in Sections~\ref{sec:proofs}-\ref{s:tightsec}.
Appendix~\ref{app:condvar} contains two required
results on the conditional variance of random variables and
Appendix~\ref{app:thinning} presents some background on Gibbs point
processes. 

\section{The $K$-function} \label{sec:K}

Let $\PP\subseteq \R^d$ be a simple stationary point process of intensity
$\rho>0$ and $A\subseteq \R^d$ a set of  positive and finite volume $|A|$. 
For all $r\geq 0$,  the $K$-function is defined by
\begin{equation*}
  K(r) = \frac{1}{|A|\rho^{2}}\E \sum_{x\in \PP\cap A}
  \sum_{y\in \PP }
  \mathds{1}_{\{0<|x-y|\leq r\}} 
  = \frac{1}{\rho}\E_o
  \sum_{y\in \PP}
  \mathds{1}_{\{0<|y|\leq r\}},
\end{equation*}
where $\E_o$ denotes the Palm expectation given $o\in \PP$. This
definition is independent of the set $A$.

Typically, the point process is only observed inside a bounded
observation window. Throughout this paper, we will consider a
square observation window  $W_n =
[-\frac{1}{2}n^{1/d},\frac{1}{2}n^{1/d}]^d$ of volume $n$ and
write $\PP_n = \PP \cap W_n$. The most naive estimator for the
$K$-function based on $\PP_n$ is  
\begin{equation}\label{eq:def estim K no correction}
  \hat{K}_n(r) = \frac{1}{n\rho^{2}} \sum_{x\in \PP_n}
\sum_{y\in \PP_n}
\mathds{1}_{\{0<|x-y|\leq r\}}.
\end{equation}
However, this estimator is downward biased due to points $y$
close to the edges of $W_n$ not being counted. This bias tends to
zero when the volume of the window goes to infinity, see
Theorem \ref{LLNthm} below, but for finite window sizes, an edge
corrected estimator is typically used to avoid the bias
\citep{ripley}. This is a weighted estimator of the form 
\begin{equation}\label{eq:def estim K}
  \hat{K}_{e,n}(r) = \frac{1}{n\rho^{2}} \sum_{x\in \PP_n}
  \sum_{y\in \PP_n }
  \mathds{1}_{\{0<|x-y|\leq r\}} e_{n}(x,y),
\end{equation}
where $e_{n}$ is an edge correction factor depending on the window.  Several
edge correction factors have been proposed in the literature, see e.g.\ \cite{ripley}. In the
case of stationary point processes, the most commonly used edge corrections are:
\begin{itemize}
\item No correction: $e_{1,n}(x,y) = 1$. This corresponds to the
uncorrected estimator $\hat{K}_n$.
\item Translation correction:
$e_{2,n}(x,y) = \frac{|W_{n}|}{|W_{n} \cap (W_{n}+x-y)|}$, where
$|\cdot| $ denotes volume.
\item Rigid motion correction:  
\begin{equation*}
e_{3,n}(x,y) = \frac{|W_{n}|}{\int_{SO(d)}|W_{n} \cap (W_{n}+\eta (x- y))|\nu(\d \eta)},
\end{equation*}
 where $SO(d)$ is the space of rotations equipped with the
 normalized Haar measure $\nu$. This is the inverse of the
 proportion of all rigid motions keeping $x$ in $W_n$ that also
 keep $y$ in $W_n$.
\item Border correction (minus sampling): $e_{4,n}(x,y) = \mathds{1}_{W_n \ominus B_r(0)}(x)\frac{n}{|W_n\ominus B_r(0)|}$, where $B_r(x)$ is the ball around $x$ of radius $r$ and $\ominus$ denotes Minkowski set difference. This edge correction is equivalent to sampling $x$ from a smaller window such that all points within distance $r$ from $x$ can be observed in $W_n$.
\item Isotropic correction: $e_{5,n}(x,y) = \frac{\Ha^{d-1}(B_{|x-y|}(x))}{\Ha^{d-1}(B_{|x-y|}(x)\cap W_n)}$, where $\Ha^{d-1} $ denotes $(d-1)$-dimensional Hausdorff measure (surface area).
\end{itemize}
The  edge corrections $e_{2,n}$ and $e_{4,n}$ lead to unbiased estimators for all stationary point processes, while $e_{3,n}$ and $e_{5,n}$ also require that the point process is  isotropic to yield unbiasedness. 

In the proofs below, we write the estimators  as 
\begin{align*}
  \hat{K}_{n}(r){}&  =  \frac{1}{n} \sum_{x\in \PP_n} \xi_{r}(x,\PP_n)\\
  \hat{K}_{e,n}(r) {}& =  \frac{1}{n} \sum_{x\in \PP_n} \xi_{e,n,r}(x,\PP_n),
\end{align*}
where $\xi_r$ and $\xi_{e,n,r}$ are the so-called score functions defined for a locally finite point pattern $\mathcal{X}$ and a point $x\in \mathcal{X}$ as
\begin{align}\label{score}
  \xi_{r}(x,\mathcal{X}) {}&=  \frac{1}{\rho^{2}} \sum_{y\in \mathcal{X} } \mathds{1}_{\{0<|x-y|\leq r\}}\\ \nonumber
\xi_{e,n,r}(x,\mathcal{X}) {}&=  \frac{1}{\rho^{2}} \sum_{y\in \mathcal{X} } \mathds{1}_{\{0<|x-y|\leq r\}} e_{n}(x,y). 
\end{align}
Note that, for all the non-trivial edge corrections listed above, the associated score functions depend on $n$. Moreover, some of them are not invariant with respect to translations $(x,\mathcal{X} )\mapsto (x+y,\mathcal{X} + y)$. 


\section{Classes of point processes} \label{sec:classes}
In this section, we introduce the two main types of point processes that we are going to consider. One is the class of conditionally $m$-dependent point processes 
 having fast decay of
correlations as considered in \cite{bchs20}. The second is a class of Gibbs point processes considered in \cite{gibbs_limit}. In Section \ref{sec:LLN}, we state a formula from \cite{yogesh,gibbsCLT} for the limiting mean and covariance of $\hat{K}_n(r)$ for fixed value(s) of $r$ when the volume of the observation window goes to infinity.

A point process $\PP$ is formally defined as a random variable taking values in
the space of locally finite subsets $\mathcal{N}$ of $\R^d$ endowed with the
smallest $\sigma$-algebra $\mathfrak N$ such that the number of points
in any given Borel set is measurable. We assume that $\PP$ is simple with intensity $\rho$ and stationary.

We will assume that all factorial moment measures exist and are
absolutely continuous, that is, the \emph{$p$th factorial moment
density} $\rho^{(p)}$ is determined via the identity
\begin{align*}
\E\Big[\prod_{i \le p} \PP(A_i)\Big] = \int_{A_1 \times \cdots \times A_p} \rho^{(p)}(x_1,\ldots,x_p)\d x_1\dotsm \d x_p 
\end{align*}
for all pairwise disjoint bounded Borel sets $A_1, \dots, A_p \subseteq \R^d$. Here $\PP(A_i)$ denotes the number of points of $\PP$ in $A_i$. 

\subsection{Conditionally $m$-dependent point processes}
The first class of point processes we will consider satisfy a set
of conditions that we introduce in this section, namely fast
decay of correlations, conditional $m$-dependence, and Conditions
{\bf (M)} and {\bf (R)} below. We say that a function $\phi: [0,
\infty) \to [0, 1]$  is \emph{fast decreasing} if $\lim_{t \to
\infty} t^m \phi(t) = 0$ for all $m \ge 1$.

\begin{defi}\label{def:expdecay} Let $\PP$ be a simple stationary
  point process in $\R^d$, such that the $p$th factorial moment density
  $\rho^{(p)}$ exist for all $p \ge 1$.  Then, $\PP$ exhibits
  \emph{fast decay of correlations} if there exists a fast
  decreasing function $\phi$ and 
    constants $c_n,C_n>0$ for all $n\in \mathbb{N}$ such that for any $p,q\in \mathbb{N}$ and
  $\xx = \{x_1, \dots, x_p\}, \xx' = \{x_{p + 1}, \dots, x_{p + q}\}
  \subset \R^d$,
  \begin{equation*}
    |\rho^{(p + q)}(\xx,\xx') - \rho^{(p)}(\xx)
    \rho^{(q)}(\xx')| \leq C_{p + q} \phi(c_{p+q}\, \dist(\xx, \xx')).
    \end{equation*}
    Here $\dist$ denotes the distance between two point sets, i.e.\
    \begin{equation*}
    \dist(\xx,\xx')= \min_{x_i\in \xx,x_j\in \xx'} |x_i-x_j|.
    \end{equation*} 
\end{defi}

For $p\in \N$, let $\PP^{\ne}_p$ denotes $p$-tuples of
pairwise distinct points in $\PP$ and recall that the
\emph{$p$-point Palm distribution} $\P_{x_1,\dots,x_p}$ is
determined by
\begin{align*}
\E\Big[{}&\sum_{{(x_1, \dots, x_p) \in \PP^{\ne}_p}} f(x_1, \dots, x_p; \PP) \Big]\\
& = \int_{\R^{pd}}\E_{x_1,\ldots,x_p}[f(x_1,\ldots,x_p; \PP)]  \rho^{(p)}(x_1,\ldots,x_p ) \d x_1,\ldots,x_p,
\end{align*}
for any bounded measurable $f: \R^{pd} \times \mathcal{N} \to \R
$. 
 With this notation, we make the following
condition on the Palm moments: 
\begin{itemize}
	\item[{\bf (M)}] For every $p\ge1$,
	$$\sup_{\substack{l \le p \\ x_1,\ldots,x_l \in \R^{d}}}\E_{x_1,\ldots,x_l}[\PP(W_1)^p] < \infty.$$
\end{itemize}

We summarize for later reference the conditions of \cite{yogesh} that are satisfied by the point processes and score functions we consider.
\begin{lem}\label{A2} Suppose that $\PP$ has fast decay of
	correlations and let $\xi$ be a linear combination of score
	functions $\xi_{r_1},\ldots,\xi_{r_p}$ of the form
	\eqref{score}. Then $(\PP,\xi)$ is admissible of class (A1)
	in the sense of \cite{yogesh} and $\xi(x,\mathcal{X})$
	depends only on $\mathcal{X}\cap B_r(x)$ where
	$r=\max_{i=1,\ldots,p} r_i$. If, moreover, $\PP$ satisfies
	the moment condition {\bf (M)},  then the $p$-moment
	condition  \cite[(1.19)]{yogesh} is satisfied, i.e.
	for all $p> 0$, there is an $M_p>0$ such that
	\begin{equation*}
    	\sup_{1\leq n\leq \infty} \sup_{1\leq l\leq  p  } \sup_{x_1,\ldots,x_{l} \in \R^d} \E_{x_1,\ldots,x_{l}} (|\xi_r(x_1,\PP_n)|\vee 1)^p \leq M_p.
	\end{equation*}
\end{lem}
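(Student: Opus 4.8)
The plan is to treat the locality and admissibility claims by inspection and to reserve the actual work for the $p$-moment bound. From \eqref{score} one sees directly that $\xi_r(x,\mathcal{X})$ only counts points $y$ with $0<|x-y|\le r$, so $\xi_r(x,\mathcal{X})=\xi_r(x,\mathcal{X}\cap B_r(x))$, and a finite linear combination $\xi=\sum_i a_i\xi_{r_i}$ therefore depends only on $\mathcal{X}\cap B_r(x)$ with $r=\max_i r_i$. In stabilization language this is exactly the statement that $\xi$ has the \emph{deterministic} radius of stabilization $r$. The remaining defining properties of class (A1) in \cite{yogesh} are measurability of $\xi$, which is clear, and translation covariance, which follows from the change of variables $y\mapsto y+a$ yielding $\xi_r(x+a,\mathcal{X}+a)=\xi_r(x,\mathcal{X})$ for every $a\in\R^d$. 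It is essential here that we work with the uncorrected scores $\xi_{r_i}$ rather than the edge-corrected $\xi_{e,n,r_i}$, which need not be translation invariant. Linearity then transfers all three properties to $\xi$.

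For the $p$-moment bound it suffices to treat a single score $\xi_r$, since the bound for a linear combination follows from the elementary estimate $(\sum_i|a_i|b_i\vee1)^p\le C\sum_i(b_i\vee1)^p$. The starting point is the deterministic domination
\begin{equation*}
|\xi_r(x_1,\PP_n)|=\frac{1}{\rho^2}\#\{y\in\PP_n:0<|x_1-y|\le r\}\le\frac{1}{\rho^2}\PP(B_r(x_1)),
\end{equation*}
valid for every $1\le n\le\infty$ because $\PP_n\subseteq\PP$; in particular the right-hand side is independent of $n$, which gives the uniformity in $n$ for free. I would then cover the ball $B_r(0)$ by a fixed number $N=N(r,d)$ of unit cubes, $B_r(0)\subseteq\bigcup_{j=1}^N(W_1+w_j)$ with offsets $w_j$ depending only on $r$ and $d$, so that $B_r(x_1)\subseteq\bigcup_{j=1}^N(W_1+x_1+w_j)$ and hence $\PP(B_r(x_1))\le\sum_{j=1}^N\PP(W_1+x_1+w_j)$. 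Convexity then gives $(|\xi_r(x_1,\PP_n)|\vee1)^p\le C(\rho,N,p)\big(1+\sum_{j=1}^N\PP(W_1+x_1+w_j)^p\big)$.

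The crux is to bound $\E_{x_1,\ldots,x_l}[\PP(W_1+x_1+w_j)^p]$ uniformly in the conditioning points. Here I would invoke the translation covariance of the Palm distributions of a stationary process, $\E_{x_1,\ldots,x_l}[f(\PP)]=\E_{x_1-a,\ldots,x_l-a}[f(\PP+a)]$, applied with the shift $a=x_1+w_j$. This turns the count in the moving cube $W_1+x_1+w_j$ into the count in the fixed cube $W_1$, at the cost of replacing the conditioning configuration by $(-w_j,x_2-x_1-w_j,\ldots,x_l-x_1-w_j)$. Taking the supremum over all such configurations and all $l\le p$, each term is bounded by $\sup_{l\le p}\sup_{y_1,\ldots,y_l}\E_{y_1,\ldots,y_l}[\PP(W_1)^p]$, which is finite by condition {\bf (M)}. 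Assembling the constants yields the desired $M_p$, uniform in $n$, in $l\le p$, and in $x_1,\ldots,x_l$.

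The step I expect to require the most care is this last one: keeping the bookkeeping of the Palm translation covariance correct so that the supremum over conditioning configurations is genuinely reduced to the form controlled by {\bf (M)}, and checking that the covering constant $N(r,d)$, and hence $M_p$, is independent of $n$ and of the positions $x_1,\ldots,x_l$. Everything else is a routine combination of the monotonicity of $A\mapsto\PP(A)$ and convexity.
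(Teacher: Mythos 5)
Your proof is correct and follows essentially the same route as the paper's: verify class (A1) from the translation invariance and finite deterministic range of the score (the paper phrases this via the bounded, translation-invariant $U$-statistic kernel $h$ vanishing for $|x-y|>r$), then dominate $|\xi_r(x_1,\PP_n)|$ by $\rho^{-2}\PP(B_r(x_1))$ and invoke condition \textbf{(M)}. The only difference is that you spell out the reduction from $\PP(B_r(x_1))^p$ to $\PP(W_1)^p$ via a unit-cube covering and Palm translation covariance, a step the paper leaves implicit; your bookkeeping there is correct.
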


\begin{proof}
	It suffices to show the lemma for $\xi=\xi_r$, the extension
	to general linear combinations being trivial. 
	
	Since $\PP$ has fast decay of correlations by assumption and
	since $\xi_r(x,\PP)$ has the form of a $U$-statistics, i.e.\
	\begin{equation*}
	\xi_r(x,\mathcal{X}) =\tfrac{1}{2}\sum_{y\in \mathcal{X}} h(x,y)
	\end{equation*}
	where $h(x,y)=\frac{2}{\rho^2}\mathds{1}_{\{0<|x-y|\leq r\}}$
	is bounded and translation invariant and vanishes for
	$|x-y|>r$, the pair $(\PP,\xi)$ is of class (A1). 

    Since
    \begin{equation*}
   		\E_{x_1,\ldots,x_{l}} (|\xi_r(x_1,\PP_n)|\vee 1)^p \leq \E_{x_1,\ldots,x_{l}} \PP(B_r(x_1))^p, 
    \end{equation*}  
    the $p$-moment condition
	\cite[(1.19)]{yogesh} follows from the moment condition {\bf
	(M)}.
\end{proof}

The central limit theorems in Section \ref{sec:mainresults}, require a variance lower bound. To obtain this, we make two further assumptions. For this, we assume that we study the $K$-function on an interval $[r_0,R]$.

\begin{defi} A point process $\PP$ is said to be \emph{conditionally $m$-dependent} if there exists a random measure $\Lambda$ such that
$\PP \cap A$ and $\PP \cap A'$ are conditionally independent given
$\sigma(\Lambda, \PP \cap {A''})$ for any bounded Borel sets
$A, A', A'' \subseteq \R^d$ such that the distance between $A$ and $A'$
is larger than some $m$. Here, $\sigma(\Lambda, \PP \cap {A''})$
denotes the $\sigma$-algebra generated by $\Lambda$ and
$\PP \cap {A''}$.  For such a process, we let $\tilde{R} = \max(m,R)$. 
\end{defi}

For a conditionally $m$-dependent process, we introduce the following condition, which essentially guarantees the realizability of certain point configurations. 

\begin{enumerate}
	\item[{\bf (R)}] Let $\PP$ be a conditionally $m$-dependent point process. For any $r_0\leq r_1<r_2 \leq R$, define events $F_1, F_2 \in \mathfrak N$ by
	\begin{align*}
	F_{1} ={}& \lbrace  \forall (x,y) \in (\PP_{(5\tilde{R})^{d}})_2^{\neq} : |x-y|>R \rbrace \\
	F_{2} ={}& \left\lbrace \forall (x,y) \in (\PP_{(5\tilde{R})^{d}})_2^{\neq}: |x-y|>r_1\right\rbrace \\
	&\cap \left\lbrace  \exists (x,y) \in (\PP_{(3\tilde{R})^{d}})_2^{\neq}: |x-y|\leq r_2 \right\rbrace.
	\end{align*}
	Then it must hold that
	\begin{equation*}
	\E\big[\min_{i \in \{1,2\}} \P\big( \PP_{(5\tilde{R})^d} \in F_i\,|\, \sigma(\Lambda , \PP \setminus W_{\tilde{R}^d})\big)  \big] > 0,
	\end{equation*}
	where $\Lambda$ is the measure from the definition of conditional $m$-dependence.
\end{enumerate}

Examples of conditionally $m$-dependent processes having fast
decay of correlations and satisfying {\bf (M)} and {\bf (R)} are
log-Gaussian Cox processes and Mat\'{e}rn cluster processes, see
\cite{bchs20}. Note that, although the proof for
log-Gaussian Cox processes is only given for compactly supported
covariance functions, the arguments easily generalize to fast
decaying covariance functions such as the exponential covariance
function, which will be used for simulations in Section
\ref{sec:gof}.

\subsection{Gibbs point processes}\label{sec:gibbs}
The second class of point processes we shall consider is a class of Gibbs point processes which we call $\mathbf{\Psi}^*$. This will be almost the same as the class $\mathbf{\Psi}^*$ in \cite{gibbs_limit,gibbsCLT} but with a few restrictions. We consider an energy functional $\Psi$ defined on finite point sets $\X$ that  satisfies the following conditions:
\begin{itemize}
	\item Translation invariance: $\Psi(\X) = \Psi(\X + x)$ for all $x\in \R^d$.
	\item Rotation invariance: $\Psi(\X) = \Psi(\X')$ whenever $\X'$ is a rotation of $\X$.
	\item Monotonicity: $\Psi(\X) \leq \Psi(\X')$ whenever $\X\subseteq \X'$.
	\item Positivity: $\Psi(\X)\in [0,\infty]$.
	\item Non-degeneracy: $\Psi(\{x\})<\infty$ for all $x\in \R^d$.
\end{itemize}

Let 
\begin{equation*}
\Delta^{\Psi} (x,\X) = \Psi(\X\cup \{x\}) - \Psi(\X)
\end{equation*}
with the convention $\infty - \infty = 0$. We say that $\Psi $ has finite range if there is a radius $r^{\Psi}$ such that
\begin{equation*}
\Delta^\Psi(x,\X) = \Delta^\Psi(x,\X\cap B_{r^{\Psi}}(x)).
\end{equation*}
for all $(x,\mathcal{X})$.

For a  finite range energy functional $\Psi$, we may  consider the infinite volume Gibbs point process with inverse temperature $\beta$ and activity $\tau$ satisfying 
\begin{equation}\label{eq:tau_condition}
\tau \kappa_d (r^{\Psi} )^d<1,
\end{equation}
 where $\kappa_d =\pi^{d/2}/\Gamma(1+d/2)$ is the volume of the $d$-dimensional unit ball.
Condition \eqref{eq:tau_condition}, together with the finite
range, ensures existence and uniqueness of the infinite volume
Gibbs process \cite[Thm. 4]{dereudre}. This  is the point process
$\PP$ satisfying that for any bounded domain $D$,
conditionally on $\PP\cap D^c=\X_0$,
$\PP\cap D$ is absolutely continuous with respect to a
Poisson process $\mathcal{Q}$  on $ D$ of intensity
$\tau$ with density 
\begin{equation}\label{eq:gibbs_def}
\X \mapsto 
\frac{\exp(-\beta(\Delta^\Psi_D(\X , \X_0 ))}{ 
	\E[ \exp(-\beta\Delta^\Psi_D(\mathcal{Q}  , \X_0))]},
\end{equation}
where expectation is taken with respect to $Q$, and for $\X
\subseteq D$, 
\begin{equation*}
\Delta^\Psi_D(\X , \X_0 ) = 
\Psi(\X \cup (\X_0 \cap (D\oplus B_{r^\Psi}(0)))) - 
\Psi(\X_0 \cap (D\oplus B_{r^\Psi}(0))).
\end{equation*}

The class $\mathbf{\Psi}^*$ consists of all infinite volume Gibbs point processes satisfying \eqref{eq:tau_condition}, where the energy functional has one of the following forms: 
\begin{itemize}
	\item[(i)] Pair potential: There is a pair potential function $\phi:[0,\infty)\to [0, \infty]$ such that $\phi$ has compact support, $\phi^{-1}(\infty)\subseteq [0,r_0]$ and $\phi$ is bounded on compact subintervals of $(r_0,\infty )$. Then
	\begin{equation*}
	\Psi(\{x_1,\ldots,x_n\}) = \sum_{i<j} \phi(|x_i-x_j|).
	\end{equation*}
	\item[(ii)] Area interaction process: Let $K\subseteq  \R^d $ be a deterministic compact convex set. Then
	\begin{equation*}
	\Psi(\{x_1,\ldots,x_n\}) = \Big|\bigcup_{i=1}^n(x_i+K)\Big|  
	\end{equation*}
	\item[(iii)] For a fixed $R>0$ and $k>2$,
	\begin{equation*}
	\Psi(\{x_1,\ldots,x_n\}) =\infty
	\end{equation*}
	if there is a ball of radius $R$ containing at least $k$ of the points. Otherwise, $\Psi(\{x_1,\ldots,x_n\}) =0$. 
\end{itemize}
Note that all these energy functionals have finite range. It was shown in \cite{gibbs_limit} that the point processes in $\PP$ can be constructed by a backwards oriented  perfect simulation technique, which is recalled in Appendix \ref{app:thinning} for reference in the proofs. It follows from this construction that all Gibbs point processes of class $\mathbf{\Psi}^*$ have fast decay of correlations as noted in \cite{yogesh}.

\begin{rem}
	The energy functionals here are essentially the same as in \cite{gibbsCLT,gibbs_limit}, except that 1) we modified the pair potentials in (i) to have finite range and 2) some of the energy functionals in \cite{gibbs_limit} allowed an extra term $\alpha n$, where $n$ is the number of points in $\X$. However, removing the term $\alpha n$ yields the same point process if $\tau$ is replaced by $\tilde{\tau}=\tau \exp(-\beta \alpha)$, see \cite[Def. 8]{dereudre}. The requirement
	\begin{equation*}
	\tau \exp(-\beta \alpha) \kappa_d (r^{\Psi} )^d<1
	\end{equation*}
	of \cite{gibbsCLT,gibbs_limit} then becomes equivalent to our
	condition $\tilde{\tau} \kappa_d (r^{\Psi} )^d<1.$
\end{rem}

\subsection{Laws of large numbers}\label{sec:LLN}
    
For both types of point processes we consider, the literature provides formulas for the limiting mean and variance when $n\to \infty$.

\begin{thm}[LLN for $\hat{K}_n(r)$]\label{LLNthm} Let $\PP$ be
	either a simple stationary point process in $\R^{d}$ that
	exhibits fast decay of correlations as in
	Definition~\ref{def:expdecay} and satisfies Condition {\bf
	(M)} or a Gibbs process of class $\mathbf{\Psi}^*$. Further,
	let $\hat{K}_n$ be defined as in~\eqref{eq:def estim K no
	correction}. Then, for any $r>0$, there is a constant $C_r$
	such that
	\begin{equation}\label{asympbias}
		\left| \E\hat{K}_n(r) -
		K(r)  \right|
		\leq C_rn ^{-\frac{1}{d}}.
	\end{equation}
	Moreover, for $r_1,r_2>0$,
	\begin{equation}\label{VarLim}
		\lim_{n \rightarrow \infty} 
		n \Cov (\hat{K}_n(r_1),\hat{K}_n(r_2)) =
		\frac{1}{\rho^{3}} \E_{o}  \sum_{x,y \in\PP\setminus \lbrace 0 \rbrace} \mathds{1}_{\{|x|\leq r_1\}} \mathds{1}_{\{|y|\leq r_2\}} +
		\int_{\R^{d}} a_{r_1,r_2}(x) \d x
	\end{equation}
	where
	\begin{align*}
	a_{r_1,r_2}(x) =  {}& \frac{\rho^{(2)}(0,x)}{2\rho^{4}} \E_{{0,x}} \Big( \sum_{u\in \PP\setminus \lbrace 0\rbrace }\sum_{v\in \PP\setminus \lbrace x \rbrace }  \Big(\mathds{1}_{\{|u|\leq r_1\}}
	\mathds{1}_{\{|x-v|\leq r_2\}} +\mathds{1}_{\{|u|\leq r_2\}}
	\mathds{1}_{\{|x-v|\leq r_1\}} \Big) \Big) \\
	&   -   K(r_1)K(r_2).
	\end{align*}
	The limit \eqref{VarLim} is finite.
	In particular, $\hat{K}_n(r)$ converges in probability towards $K(r)$. 
\end{thm}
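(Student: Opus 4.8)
The plan is to establish the three claims of Theorem~\ref{LLNthm} — the bias bound~\eqref{asympbias}, the covariance limit~\eqref{VarLim}, and finiteness of that limit — by invoking the general stabilization machinery for score-function sums that was set up in Lemma~\ref{A2}. By that lemma, for any $r>0$ the pair $(\PP,\xi_r)$ is admissible of class (A1) and, under Condition~\textbf{(M)} (or for the Gibbs class $\mathbf{\Psi}^*$, which has fast decay of correlations), the $p$-moment condition \cite[(1.19)]{yogesh} holds. This places us squarely inside the framework of \cite{yogesh,gibbsCLT}, whose general limit theorems I intend to apply with $\xi=\xi_r$. The whole proof is therefore a matter of specialization rather than of building new estimates from scratch.

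\emph{Bias bound.} First I would write $\E\hat K_n(r) = \frac1n\E\sum_{x\in\PP_n}\xi_r(x,\PP_n)$ and compare it to $K(r)$. The discrepancy comes entirely from edge effects: for a point $x$ at distance greater than $r$ from $\partial W_n$, the truncation $\PP_n = \PP\cap W_n$ does not affect the inner sum, so $\E_x[\xi_r(x,\PP_n)] = \E_o[\xi_r(o,\PP)] = \rho K(r)$. The points that contribute a defect are those within distance $r$ of the boundary, a region of volume $O(r\cdot n^{(d-1)/d}) = O(n^{1-1/d})$. Using the Palm formula together with the uniform moment bound from Lemma~\ref{A2} to control $\E_x|\xi_r(x,\PP_n)|$ on this boundary layer, the total defect is $O(n^{1-1/d})$, and dividing by $n$ gives the claimed $C_r n^{-1/d}$. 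The convergence in probability of $\hat K_n(r)$ to $K(r)$ then follows once the variance is shown to vanish.

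\emph{Covariance limit.} For~\eqref{VarLim} I would apply the second-order (variance/covariance) limit theorem of \cite{yogesh} to the admissible pair $(\PP,\xi_{r_1}),(\PP,\xi_{r_2})$. The general theorem expresses $\lim_n n\Cov(\hat K_n(r_1),\hat K_n(r_2))$ as an integral of a two-point correlation of the score functions against the Palm structure. The task is then the bookkeeping of specializing that abstract formula to $\xi_{r_i}(x,\mathcal X)=\rho^{-2}\sum_{y}\mathds 1_{\{0<|x-y|\le r_i\}}$. Splitting the double score-sum into the diagonal contribution (the same point $x$ scored at both radii, which after the Palm calculation yields the first term $\frac{1}{\rho^3}\E_o\sum_{x,y}\mathds 1\{|x|\le r_1\}\mathds 1\{|y|\le r_2\}$) and the off-diagonal contribution (distinct reference points $x$ and $x+u$, producing the integrand $a_{r_1,r_2}(x)$ via the two-point Palm expectation $\E_{0,x}$ weighted by $\rho^{(2)}(0,x)/\rho^4$, minus the product of means $K(r_1)K(r_2)$) should reproduce~\eqref{VarLim} exactly. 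I would verify the symmetrization in $r_1,r_2$ that gives the $+$ of the two indicator products.

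\emph{Finiteness.} The main obstacle, and the place requiring genuine estimation rather than quotation, is showing that the limit in~\eqref{VarLim} is finite — equivalently, that the $x$-integral of $a_{r_1,r_2}$ converges. The integrand is a difference of two quantities each of which does not individually decay, so finiteness must come from cancellation: $a_{r_1,r_2}(x)$ decays because the two-point Palm expectation factorizes as $|x|\to\infty$ and $\rho^{(2)}(0,x)\to\rho^2$, making the first term approach $K(r_1)K(r_2)$ and the difference tend to zero. To make this rigorous I would use the fast decay of correlations from Definition~\ref{def:expdecay} to bound $|\rho^{(2)}(0,x)-\rho^2|$ and, more delicately, the deviation of the Palm expectation from its product form, by the fast-decreasing function $\phi$ applied to $\dist$; the uniform moment control from Lemma~\ref{A2} keeps the prefactors bounded. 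Since $\phi$ decays faster than any polynomial while the integration region grows only polynomially, integrability over $\R^d$ follows. For the Gibbs class the same conclusion holds because membership in $\mathbf\Psi^*$ likewise grants fast decay of correlations, so the identical decay estimate applies. I expect this cancellation-and-decay argument to be the technical heart of the proof, with everything else being a specialization of the cited general theorems.
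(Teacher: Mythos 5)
Your proposal is correct and follows essentially the same route as the paper: both reduce everything to the general limit theory of \cite{yogesh} (and \cite{gibbsCLT} in the Gibbs case) via the admissibility and moment bounds of Lemma~\ref{A2}, with the bias handled by a direct boundary-layer computation and the covariance obtained by specializing the second-order theorem (the paper does this via the polarization identity applied to $\xi_{r_1}+\xi_{r_2}$, which is only a cosmetic difference from your direct application). Your extra cancellation-and-decay argument for finiteness is sound but not needed in the paper, where finiteness comes packaged with the cited theorems.
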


\begin{proof}
	In the case of fast decay of correlations, the inequality
	\eqref{asympbias} and the limit \eqref{VarLim} for $r_1=r_2$
	follow directly from \cite[Thm. 1.12]{yogesh}  and Lemma
	\ref{A2}. The case $r_1\neq r_2$ in \eqref{VarLim} follows
	from the identity
	\begin{equation*}
	\Cov(X,Y)=\tfrac{1}{2}(\Var(X+Y)-\Var(X)-\Var(Y))
	\end{equation*}
{together with \cite[Thm. 1.12]{yogesh}
	and Lemma~\ref{A2} applied to $\xi_{r_1} + \xi_{r_2}$.}
	
	For Gibbs point processes, the first statement follows from a direct computation valid for any stationary point process, while the limiting covariance follows from \cite[Thm. 1.1]{gibbsCLT}. 
\end{proof}


\section{Main results}\label{sec:mainresults}

In this section, we state the main results of this paper, which
is a  central limit theorem for $\hat{K}_{e,n}(r)$ when
restricted to a bounded interval $[r_0,R]$. Throughout the paper,
$r_0$ and $R$ will denote the constants in condition \textbf{(R)}
in the case of conditionally $m$-dependent processes, and the
constants from the definition of the energy functional for Gibbs
point processes. 

We first state a central limit theorem for the finite dimensional
distributions of $\hat{K}_{e,n}(r)$ as described
in~\eqref{eq:def estim K}.  The proof is given in Section
\ref{sec:proofs} . 
\begin{thm}\label{thmfindim} 
	Let $\PP$ be a conditionally
	$m$-dependent point process having fast decay of correlations
	and satisfying condition {\bf (M)} and {\bf (R)}  or a Gibbs
	point process having energy functional of class
	$\mathbf{\Psi}^*$. Let $r_0\leq r_1<\dotsm < r_p \leq R$ and
	let $e=e_{i,n}, i\in \{1,\ldots,5\}$ be one of the
	edge corrections from Section \ref{sec:K}. Then 
	\begin{equation*}
	\sqrt{n}\Big(\hat{K}_{e,n}(r_1)- \E\hat{K}_{e,n}(r_1),\ldots, \hat{K}_{e,n}(r_p)-\E\hat{K}_{e,n}(r_p)\Big)
	\end{equation*}
	converges in distribution to a multivariate Gaussian variable
	with mean zero and covariance structure given by Theorem
	\ref{LLNthm}. 
\end{thm}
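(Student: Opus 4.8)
The strategy is to establish the multivariate central limit theorem via the Cramér–Wold device, reducing the joint convergence to a one-dimensional CLT for an arbitrary linear combination. Fix real coefficients $\lambda_1, \ldots, \lambda_p$ and set
\begin{equation*}
S_n = \sqrt{n} \sum_{j=1}^p \lambda_j \big(\hat{K}_{e,n}(r_j) - \E\hat{K}_{e,n}(r_j)\big).
\end{equation*}
It suffices to show that $S_n$ converges in distribution to a centred Gaussian whose variance matches the quadratic form $\sum_{j,k}\lambda_j\lambda_k \Sigma_{jk}$, where $\Sigma$ is the limiting covariance matrix supplied by Theorem~\ref{LLNthm}. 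Since each $\hat{K}_{e,n}(r_j)$ is of the form $\frac{1}{n}\sum_{x\in\PP_n}\xi_{e,n,r_j}(x,\PP_n)$, the combination $\xi = \sum_j \lambda_j \xi_{e,n,r_j}$ is again a score function, so $S_n/\sqrt n$ is a centred, normalized sum of a single score functional over the point process restricted to $W_n$.

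The plan is then to split by the two classes of processes. For the case of fast decay of correlations, I would verify that the linear combination $\xi$ satisfies the hypotheses of the pointwise CLT machinery of \cite{yogesh}: by Lemma~\ref{A2}, $(\PP,\xi)$ is admissible of class (A1) and satisfies the $p$-moment condition under \textbf{(M)}, so the main obstacle is \emph{not} the upper variance bound or the moment control but the \emph{variance lower bound}, which is exactly what conditions \textbf{(M)} and \textbf{(R)} are designed to supply. I would invoke the techniques of \cite{bchs20,gibbsCLT} as flagged in the introduction to deduce that $n\Var(S_n/\sqrt n)$ is bounded below by a positive constant, ensuring nondegeneracy; combined with the stabilization and moment conditions this yields asymptotic normality with the variance identified by \eqref{VarLim}. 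For the edge-corrected case, I would note that the edge-correction factors $e_{i,n}$ are bounded (uniformly in $n$ for fixed $R$, since $r\le R$ and the window grows) and depend only on $x-y$ or on a bounded neighbourhood, so that $\xi_{e,n,r}$ inherits the boundedness, finite-range and moment properties of $\xi_r$; the asymptotic covariance is unchanged because the correction factors converge to $1$ as $n\to\infty$ on the relevant scale.

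For the Gibbs case I would instead appeal directly to the CLT of \cite{gibbsCLT} (their Thm. 1.1), which applies to score-function sums over Gibbs processes of class $\mathbf{\Psi}^*$ and already delivers the limiting covariance recorded in Theorem~\ref{LLNthm}; here the perfect-simulation construction recalled in Appendix~\ref{app:thinning} provides the exponential mixing needed, and the variance lower bound again follows by the arguments of \cite{gibbsCLT}.

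The step I expect to be the genuine obstacle is the variance lower bound in the conditionally $m$-dependent setting: the upper bounds and moment conditions are essentially routine consequences of Lemma~\ref{A2}, but proving that the limiting variance in \eqref{VarLim} is strictly positive requires showing that the score functional is not asymptotically degenerate. This is precisely where condition \textbf{(R)} enters—guaranteeing that the two competing point configurations $F_1$ and $F_2$ (one with all pairwise distances exceeding $R$, one realizing a pair within $[r_1,r_2]$) both occur with positive conditional probability, so that conditioning on the exterior configuration still leaves genuine fluctuation in the count $\hat K_{e,n}(r)$. I would isolate a block of the window of side comparable to $\tilde R$, use conditional $m$-dependence to make it conditionally independent of the rest, and apply the conditional-variance lemmas of Appendix~\ref{app:condvar} to lower-bound the total variance by a sum of such block contributions, each bounded below via \textbf{(R)}. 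Once the nondegeneracy is secured, convergence of the one-dimensional distributions follows from the cited CLTs, and Cramér–Wold assembles these into the claimed multivariate Gaussian limit.
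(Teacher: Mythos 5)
Your proposal follows essentially the same route as the paper: Cram\'er--Wold reduction to a linear combination of score functions, the pointwise CLT of \cite{yogesh} (resp.\ \cite{gibbsCLT}) whose only non-routine hypothesis is the variance lower bound, a block decomposition over a lattice of side comparable to $\tilde R$ combined with conditional $m$-dependence, the conditional-variance lemmas of Appendix~\ref{app:condvar}, and condition \textbf{(R)} to secure nondegeneracy. The one place where your sketch is looser than the paper is the edge-corrected case: since the corrected score functions are not translation invariant, the paper does not feed them into the CLT machinery directly but instead proves the quantitative bound $n\Var(\mathcal{E}_{e,n}(r))\le Cn^{-1/d}$ of Lemma~\ref{varbound_all} (exploiting that the corrections differ from $1$ only on a boundary layer of volume $O(n^{1-1/d})$), so that the corrected and uncorrected centred estimators are asymptotically equivalent at scale $\sqrt{n}$ --- your appeal to the correction factors ``converging to $1$'' should be replaced by this variance estimate.
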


The next theorem is a functional central limit theorem for $\hat{K}_{e,n}(r)$. The proof is given in Section \ref{s:tightsec}
\begin{thm}\label{mainthm}
	Let $\PP$ be a conditionally $m$-dependent point process  with fast decay of
	 correlations satisfying conditions {\bf (M)} and {\bf (R)} or a Gibbs point process of class $\mathbf{\Psi}^*$.  Let $e=e_{i,n}$, $i\in \{1,\ldots,5\}$, be one of the edge corrections in Section \ref{sec:K}.
	The process
	\begin{equation*}
	\big\{\sqrt{n}(\hat{K}_{e,n}(r)-\E\hat{K}_{e,n}(r)) \big\}_{r\in [r_0,R]} 
	\end{equation*}
	converges weakly in Skorokhod topology to a centered Gaussian process with covariance structure given by Lemma \ref{LLNthm}.  The limiting process has a modification that is H\"{o}lder continuous for any exponent $\gamma < 1/2$.
\end{thm}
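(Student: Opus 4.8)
The plan is to combine the finite-dimensional convergence already supplied by Theorem~\ref{thmfindim} with a tightness estimate, and to extract both the weak convergence and the H\"older regularity of the limit from a single moment bound on increments. Write $Z_n(r)=\sqrt n\,(\hat{K}_{e,n}(r)-\E\hat{K}_{e,n}(r))$. By Theorem~\ref{thmfindim} the finite-dimensional laws of $Z_n$ converge to those of a centered Gaussian vector with the covariance of Theorem~\ref{LLNthm}, so the candidate weak limit is forced to be the centered Gaussian process with that covariance, and the entire task reduces to tightness in the Skorokhod space $D[r_0,R]$. I would obtain tightness (and simultaneously the H\"older exponent) from the estimate that for every integer $k\ge1$ there is $C_k$, independent of $n$, with
\begin{equation}\label{eq:incrbound}
\E\big|Z_n(r_2)-Z_n(r_1)\big|^{2k}\le C_k\Big(|r_2-r_1|^{k}+n^{1-k}|r_2-r_1|\Big),\qquad r_0\le r_1<r_2\le R.
\end{equation}
The jumps of the step process $Z_n$ have size $O(n^{-1/2})\to0$, so \eqref{eq:incrbound} with $k\ge2$ yields tightness in $D[r_0,R]$ by the (Billingsley-type) moment criterion; passing the first term of \eqref{eq:incrbound} to the limit and invoking Kolmogorov--Chentsov produces a modification of the limit that is H\"older of any order $\gamma<(k-1)/(2k)$, and letting $k\to\infty$ gives every $\gamma<1/2$.

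To prove \eqref{eq:incrbound} I would first express the increment through a thin-annulus score. Since
\[
\xi_{e,n,r_2}(x,\X)-\xi_{e,n,r_1}(x,\X)=\frac1{\rho^{2}}\sum_{y\in\X}\mathds{1}_{\{r_1<|x-y|\le r_2\}}\,e_n(x,y),
\]
one has $Z_n(r_2)-Z_n(r_1)=n^{-1/2}(S_n-\E S_n)$ with $S_n=\sum_{x\in\PP_n}g(x,\PP_n)$ and $g$ the annulus score above. The heuristic is that $g$ is supported on a shell of volume $\asymp|r_2-r_1|$, so the per-unit-volume variance of $S_n$ scales like $|r_2-r_1|$; for a sum of $\asymp n$ weakly dependent bounded-moment terms the centered $2k$-th moment should then be of Gaussian order $(\Var S_n)^{k}\asymp(n|r_2-r_1|)^{k}$, which after the factor $n^{-k}$ is precisely the first term of \eqref{eq:incrbound}. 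Making this rigorous uniformly in $n$ is where the tightness machinery of \cite{bchs20} enters: I would partition $W_n$ into a grid of blocks of side $\asymp\tilde R$, use the conditional $m$-dependence to render scores in blocks separated by more than $m$ conditionally independent given $\Lambda$ and the configuration outside, and run the combinatorial cumulant/partition estimate of \cite{bchs20} on the resulting array. The edge correction enters only through the uniformly bounded factor $e_n$, which does not alter the scaling, while condition {\bf (M)} and Lemma~\ref{A2} provide the uniform $L^{p}$ control of the per-point scores needed at every step.

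For Gibbs processes of class $\mathbf{\Psi}^{*}$ there is no measure $\Lambda$, and the conditional $m$-dependence must be replaced by the finite range together with the backward perfect-simulation construction recalled in Appendix~\ref{app:thinning}. Here the clan-of-ancestors coupling plays the role of the decoupling device: scores supported in well-separated blocks become independent once the associated ancestor clans fail to meet, an event of fast-decaying probability under \eqref{eq:tau_condition}. I would adapt each step of the \cite{bchs20} combinatorial estimate to this coupling, checking that the partition bookkeeping and the per-block variance estimate $\asymp|r_2-r_1|$ survive unchanged.

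I expect the genuine obstacle to lie in the regime of very small increments, $|r_2-r_1|\lesssim n^{-1}$, where the annulus contains at most one pair and $S_n$ is visibly discrete: a single pair then forces an $O(1)$ jump, so the clean Gaussian scaling $|r_2-r_1|^{k}$ fails and one is left with the residual term $n^{1-k}|r_2-r_1|$ of \eqref{eq:incrbound}. To absorb this I would use the refined moment criterion of \cite{bchs20}, which tolerates such an $n$-dependent remainder, exploiting the monotonicity of the annulus count in $r$ to control the oscillation of $Z_n$ over sub-resolution intervals and confining the bad contribution to a set of summable probability. Verifying that this refinement, designed for persistence diagrams with conditionally $m$-dependent inputs, transfers to the Gibbs coupling is the delicate point of the whole argument.
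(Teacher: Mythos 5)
Your overall architecture---finite-dimensional convergence from Theorem~\ref{thmfindim} plus a moment bound on increments for tightness, with Kolmogorov--Chentsov supplying the H\"older modification---matches the paper's, and you correctly locate the difficulty in the sub-resolution regime $|r_2-r_1|\lesssim n^{-1}$. The gap lies in how you convert your increment bound into tightness. A bound on the $2k$-th moment of a \emph{single} increment does not yield tightness in $D[r_0,R]$ under any standard criterion: the usual moment criteria (Billingsley's Theorem~13.5, or the criterion of \cite{heinrichschmidt} that the paper actually invokes) require a bound on the \emph{joint} moment $\E\big[Z_n(I_1)^2Z_n(I_2)^2\big]$ over neighboring intervals of the form $C|I_1|^{1/2+\eps}|I_2|^{1/2+\eps}$, uniformly in $n$ and with no additive remainder. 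Your remainder term $n^{1-k}|r_2-r_1|$ does not fit that form, and the ``refined criterion tolerating an $n$-dependent remainder'' that you attribute to \cite{bchs20} is not there in that shape---that paper uses the same neighboring-increment fourth-moment criterion. The paper sidesteps the small-increment problem entirely: it does not aim for the Gaussian-order scaling $|I|^k$ (which, as you observe, genuinely fails below resolution $n^{-1}$), but only for exponent $3/4$ per interval in the joint fourth moment. This is obtained from the fourth-cumulant bound $c^4(n\bar{K}_{e,n}(I_1),n\bar{K}_{e,n}(I_1),n\bar{K}_{e,n}(I_2),n\bar{K}_{e,n}(I_2))\le Cn|I_1|^{3/4}|I_2|^{3/4}$ of Lemma~\ref{c4bound}, together with $n\Var(\hat{K}_{e,n}(I))\le C|I|$ from Lemma~\ref{varbound_all} and the identity $\E(X^2Y^2)=c^4(X,X,Y,Y)+\Var(X)\Var(Y)+2\Cov(X,Y)^2$; after normalization the cumulant term even carries a factor $n^{-1}$, so the required bound holds at every scale with no remainder.

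The second gap is that your central estimate is only asserted heuristically. The substantive content of the paper's proof is exactly the machinery you defer to references: the cumulant-measure and semi-cluster decompositions of Section~\ref{s:prel-moment-decomp}, the mixed $\xi$-moment bounds of Lemma~\ref{ximomentbound} that produce the crucial factors $|I_1|^A|I_2|^B$ (whose proof uses the specific annulus geometry of the $K$-score, not merely weak dependence and bounded moments), and Lemma~\ref{lem:fast_xi}, which establishes fast decay of mixed $\xi$-moments, including the Gibbs case via the ancestor-clan coupling. Without these, neither your bound nor the paper's is established, and the per-block variance heuristic does not by itself control a $2k$-th moment uniformly in $n$. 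Finally, your route to H\"older continuity of the limit is heavier than needed: since the limit is Gaussian, the second-moment bound $\E[(k_{r_2}-k_{r_1})^2]\le C|r_2-r_1|$, inherited from Lemma~\ref{varbound_all} via the Portmanteau theorem, already yields $\E[(k_{r_2}-k_{r_1})^{2m}]\le C_m|r_2-r_1|^m$ for every $m$, so Kolmogorov's continuity theorem applies directly without any $2k$-th moment control of $Z_n$ itself.
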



\subsection{Generalization to other summary statistics}\label{sec:corollaries}
The arguments used to prove Theorem \ref{thmfindim} and
\ref{mainthm} apply to other geometric functionals given in terms
of score functions. The key ingedients in the proofs is that the
score function satisfies the inequalities in Lemmas
\ref{varbound_all},  \ref{ximomentbound}, and \ref{lem:fast_xi} .
However, the proofs of these lemmas, especially Lemma
\ref{ximomentbound}, rely on the geometric properties of the
specific score function. Thus, a direct generalization of the
proofs seems only possible for score functions similar in flavour
to the $K$-function. The most obvious such functional is  the
pair correlation function given for isotropic point patterns by
$g(r)=\rho^{(2)}(r)/\rho^2$, where
$\rho^{(2)}(x,y)=\rho^{(2)}(|x-y|)$. The pair correlation
function relates to the $K$-function by $d\kappa_d r^{d-1}g(r)=
K'(r)$. A
kernel estimator for $g(r)$ is given in \cite{chiu} by 
\begin{equation}\label{eq:gndef}
\hat{g}_n(r)=\frac{1}{n d\kappa_d r^{d-1}}\sum_{x\in \PP_n} \sum_{y\in \PP_n} k(r-|x-y|)e_{2,n}(x,y),
\end{equation} 
where $k$ is a compactly supported kernel function which is $C^1$
on its support. Another related functional is the nearest
neighbor function given by $D(r)= \mathbb{P}_o((\PP\backslash
\{o\})\cap B_r(o)\neq \emptyset)$, which can be estimated
\citep{chiu} by 
\begin{equation}\label{eq:Dndef}
\hat{D}_n(r)=\frac{1}{\rho n}\sum_{x\in \PP_n} \mathds{1}_{\{(\PP_n\backslash \{x\})\cap B_r(x)\neq \emptyset\} }e_{4,n}(x,y).
\end{equation}

\begin{cor}\label{cor:pcf}
	Let $\PP$ be a conditionally $m$-dependent point process  with fast decay of correlations satisfying conditions {\bf (M)} and {\bf (R)} or a Gibbs point process of class $\mathbf{\Psi}^*$.  Let $\hat{g}_n$ be as in \eqref{eq:gndef} with $k=f\mathds{1}_{[-\delta,\delta)}$ where $f$ is $C^1$ on $[-\delta,\delta]$, and let $\hat{D}_n$ be as in \eqref{eq:Dndef}.
	The processes
	\begin{align*}
	&\big\{\sqrt{n}(\hat{g}_n(r)-\E\hat{g}_{n}(r)) \big\}_{r\in [r_0+\delta,R-\delta]} \\
	&\big\{\sqrt{n}(\hat{D}_n(r)-\E\hat{D}_{n}(r)) \big\}_{r\in [r_0+\delta,R-\delta]}
	\end{align*}
	converge weakly in Skorokhod topology to a centered Gaussian process, which has a modification that is H\"{o}lder continuous for any exponent $\gamma < 1/2$.
\end{cor}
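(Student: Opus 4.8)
The plan is to recast both estimators in the score-function form of Section~\ref{sec:K} and then invoke the observation, made at the start of Section~\ref{sec:corollaries}, that the proofs of Theorems~\ref{thmfindim} and~\ref{mainthm} go through for any score function satisfying the three structural inputs, Lemmas~\ref{varbound_all}, \ref{ximomentbound}, and~\ref{lem:fast_xi}. Accordingly I write
\[
\hat{g}_n(r)=\frac1n\sum_{x\in\PP_n}\zeta_{n,r}(x,\PP_n),\qquad \hat{D}_n(r)=\frac1n\sum_{x\in\PP_n}\eta_{n,r}(x,\PP_n),
\]
with score functions
\[
\zeta_{n,r}(x,\X)=\frac{1}{d\kappa_d r^{d-1}}\sum_{y\in\X}k(r-|x-y|)\,e_{2,n}(x,y),\qquad \eta_{n,r}(x,\X)=\frac1\rho\,\mathds{1}_{\{(\X\setminus\{x\})\cap B_r(x)\neq\emptyset\}}\,e_{4,n}(x),
\]
where $e_{4,n}(x)$ is the border factor, which depends only on $x$. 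The restriction to $r\in[r_0+\delta,R-\delta]$ guarantees that the only pairs $(x,y)$ entering $\zeta_{n,r}$ satisfy $|x-y|\in(r_0,R]$, so all relevant distances stay inside the interval $[r_0,R]$ on which condition~{\bf(R)} and the geometric estimates of Section~\ref{s:tightsec} are formulated.

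The finite-dimensional central limit theorem follows exactly as in the proof of Theorem~\ref{thmfindim} once one repeats Lemma~\ref{A2} for the new score functions. The function $\zeta_{n,r}$ again has the $U$-statistic form $\tfrac12\sum_{y\in\X}h_{n,r}(x,y)$ with $h_{n,r}$ bounded (since $k$ is bounded and compactly supported and $e_{2,n}$ is bounded for large $n$) and of range $R$, so $(\PP,\zeta_{n,r})$ is admissible of class~(A1); the Palm-moment bound~\cite[(1.19)]{yogesh} follows from $|\zeta_{n,r}(x,\PP_n)|\le C\,\PP(B_R(x))$ together with condition~{\bf(M)}, verbatim as in Lemma~\ref{A2}. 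For $\eta_{n,r}$ the same conclusions are immediate, since $\eta_{n,r}$ is bounded by a constant and depends only on $\X\cap B_R(x)$.

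For tightness it remains to transfer the increment estimates. For $r_1<r_2$ I decompose $\zeta_{n,r_2}-\zeta_{n,r_1}$ into a smooth part, controlled by the $C^1$ regularity of $f$ and the Lipschitz dependence of $(d\kappa_d r^{d-1})^{-1}$ and $e_{2,n}$ on $r$ and hence bounded by $C(r_2-r_1)\,\PP(B_R(x))$, together with two jump parts arising as $|x-y|$ crosses $r\pm\delta$, each bounded by the number of points in an annulus of width $r_2-r_1$. For $\hat{D}_n$, monotonicity of $r\mapsto\mathds{1}_{\{(\X\setminus\{x\})\cap B_r(x)\neq\emptyset\}}$ gives that $\eta_{n,r_2}-\eta_{n,r_1}$ equals $\rho^{-1}$ times the indicator that the nearest neighbor of $x$ lies in $B_{r_2}(x)\setminus B_{r_1}(x)$, up to a Lipschitz-in-$r$ contribution from the deterministic border factor. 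In both cases the increment is dominated by a constant times $\PP(B_{r_2}(x)\setminus B_{r_1}(x))$, precisely the quantity whose Palm moments Lemma~\ref{ximomentbound} controls for $\xi_r$; rerunning that argument yields Lemma~\ref{ximomentbound} for $\zeta_{n,r}$ and $\eta_{n,r}$, while Lemmas~\ref{varbound_all} and~\ref{lem:fast_xi} follow from the arguments of Sections~\ref{sec:proofs}--\ref{s:tightsec} applied verbatim, using that both score functions are finite range and inherit fast decay from $\PP$. Tightness in the Skorokhod topology and the H\"older-continuous modification of the limit then follow as in Theorem~\ref{mainthm}.

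The main obstacle is the increment moment bound, Lemma~\ref{ximomentbound}, for $\hat{D}_n$: unlike the additive $U$-statistic $\xi_r$, the nearest-neighbor indicator is a genuinely nonlinear functional of $\X\cap B_R(x)$, so its increment is not a sum over pairs and cannot be expanded directly. The remedy is the union bound $\mathds{1}_{\{(\X\setminus\{x\})\cap(B_{r_2}(x)\setminus B_{r_1}(x))\neq\emptyset\}}\le\PP(B_{r_2}(x)\setminus B_{r_1}(x))$, which reduces the $p$-th Palm moment of the increment to the annulus moments already established for $\xi_r$; one has to check that this domination is compatible with the Palm-moment estimates of condition~{\bf(M)}. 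A secondary subtlety is the variance lower bound in Lemma~\ref{varbound_all}, whose proof for the $K$-function uses condition~{\bf(R)} to realize the relevant configurations with positive conditional probability; for $\hat{D}_n$ and $\hat{g}_n$ one must produce the analogous annulus-occupation events, which again reduces to~{\bf(R)} on $[r_0,R]$. Finally, because $k$ has jump discontinuities at $\pm\delta$, the score $\zeta_{n,r}$ is not continuous in $r$, so one cannot expect continuity at the score level; after summation and centering the jumps contribute only annulus counts of width $r_2-r_1$, so the Skorokhod tightness criterion still applies and delivers the stated H\"older-continuous limit.
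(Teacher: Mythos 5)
Your overall strategy --- recasting both estimators as sums of score functions and re-verifying Lemmas \ref{varbound_all}, \ref{ximomentbound} and \ref{lem:fast_xi} --- is the route the paper indicates for $\hat g_n$ (the paper also records a shortcut you do not use: write $\hat{g}_n(r)= f(-\delta)\hat{K}_{e,n}(r+\delta) - f(\delta)\hat{K}_{e,n}(r-\delta) + \int_{r-\delta}^{r+\delta} \hat{K}_{e,n}(s) f'(r-s)\,\d s$ and apply the continuous mapping theorem to Theorem~\ref{mainthm}, which avoids redoing any lemma). The genuine gap is in your treatment of $\hat D_n$: you assert that Lemma~\ref{varbound_all} follows ``verbatim'' once the increment of the score is dominated by the annulus count. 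Domination of a nonnegative score does not dominate a \emph{variance}. The proof of Lemma~\ref{varbound_all} expands $n\hat K_{e,n}(I)$ as a double sum over pairs and exploits the cancellation $\rho^{(4)}-\rho^{(2)}\rho^{(2)}$ in the four-point term; the nearest-neighbour indicator is not a pair sum, so this expansion is unavailable, and replacing $\Var$ by the second moment of the dominating annulus count introduces a term of order $n|I|^2$, which destroys the bound $n\Var(\hat D_n(I))\le C|I|$ as soon as $|I|$ exceeds order $1/n$. This is precisely the step the paper singles out: for $\hat D_n$, Lemma~\ref{varbound_all} requires a new proof, obtained by running the mixed-moment/semi-cluster decomposition of Section~\ref{s:tightsec} on the second cumulant (as in Lemma~\ref{c4bound}), interpolating between the fast decay in Lemma~\ref{lem:fast_xi}(ii) and the $O(|I|)$ pointwise bound from the generalized Lemma~\ref{ximomentbound} to recover a power of $|I|$ from the off-diagonal part.

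A smaller slip: you refer to ``the variance lower bound in Lemma~\ref{varbound_all}''; that lemma contains only upper bounds. The asymptotic variance lower bound is Proposition~\ref{prop:variance lower bound} (and point 4 of the Gibbs argument), and you are right that it must be re-established for $\hat D_n$ and $\hat g_n$ by exhibiting realizable annulus-occupation events under condition \textbf{(R)} --- a step the paper itself leaves implicit. The remainder of your argument (the $U$-statistic verification for $\zeta_{n,r}$, the union bound reducing the Palm moments of the $\hat D_n$-increment to the annulus moments controlled by Lemma~\ref{ximomentbound}, and the jump/smooth decomposition of $\zeta_{n,r_2}-\zeta_{n,r_1}$) is consistent with the paper's intended proof.
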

In the case of $D_n$, the score function
$\xi=\mathds{1}_{(\PP\backslash \{x\})\cap B_r(x)\neq \emptyset}$
can be bounded by the score function \eqref{score} of the
$K$-function, which can be used to generalize Lemmas
\ref{ximomentbound}-\ref{lem:fast_xi}. Only Lemma
\ref{varbound_all} needs a new proof, which can be given in a way
similar to Lemma \ref{c4bound}. 

The case of $g_n$ can also be shown by a straightforward generalization of Lemmas \ref{varbound_all}, \ref{ximomentbound}, and \ref{lem:fast_xi}.
Alternatively, the result could be derived directly from Theorem~\ref{mainthm} by writing
\begin{equation*}
\hat{g}_n(r)= f(-\delta)\hat{K}_{e,n}(r+\delta) - f(\delta)\hat{K}_{e,n}(r-\delta) + \int_{r-\delta}^{r+\delta} \hat{K}_{e,n}(s) f'(r-s)\d s 
\end{equation*}
and applying the continuous mapping theorem, noting that the functional
\begin{align*}
\beta \mapsto  \Big(f(-\delta)\beta(r+\delta) - f(\delta)\beta(r-\delta) + \int_{r-\delta}^{r+\delta} \beta(s) f'(s-r)\d s \Big)_{r\in [r_0+\delta,R-\delta]},
\end{align*}
where $\beta$ is a cadlag function on $[r_0,R]$, is continuous in the Skorokhod topology, see arguments of \cite[Cor. 3.4]{bchs20}. 

In \cite{kclt}, a functional limit theorem was shown in the Poisson case for the more general multi-parameter $K$-function 
	\begin{equation*}
	K(r_1,\ldots,r_d) = \frac{1}{n\rho^{2}}\E \sum_{x\in \PP_n}
	\sum_{y\in \PP \setminus \lbrace x \rbrace }
	\mathds{1}_{\{x-y \in \prod_{i=1}^d [0,r_i] \}} 
	= \frac{1}{\rho}\E_o
	\sum_{y\in \PP\backslash \{o\}}
	\mathds{1}_{\{y \in \prod_{i=1}^d [0,r_i] \}}.
	\end{equation*}
	The results of this paper easily generalize to the
	multiparameter $K$-function, using a multiparameter version
	of the tightness criterion \cite[Lem. 3]{heinrichschmidt}.

A functional central limit theorem for persistence diagrams was
shown in \cite{bchs20} for conditionally $m$-dependent processes.
The proofs easily generalize to Gibbs point processes using Lemma
\ref{lem:fast_xi} of the present paper. The only
case where one has to be careful
is when a hardcore radius is present since this may rule out the
possibility of a variance lower bound in certain parts of the
persistence diagram.


\section{Statistics}\label{sec:gof}

\subsection{Goodness-of-fit tests}
\label{sec:gooftest}

Consider the problem of how well an observed point pattern $\xx$
fits an assumed model $\PP_0$. We
present how the results of
Theorem~\ref{mainthm} can be used for construction of  
goodness-of-fit tests by considering a
Kolmogorov-Smirnov type test based on Ripley's $K$-function. We assume that all parameters are known. 
Let $\mathcal{H}_0$ be the hypothesis that $\xx$ is a realisation
of $\PP_0$. If $\mathcal{H}_0$ is true, then for a given
$R>0$, 
\begin{equation*}
\sup_{r \in [0,R]}	
|\sqrt{n}(\hat{K}_{e,n}(r)-\E\hat{K}_{e,n}(r))|
\approx_{n \rightarrow \infty}
\sup_{r \in [0,R]}	|Y(r)|
\end{equation*}
where $Y(r)$ is a centered Gaussian process with covariance
$C_{\PP_0}$ given by the limiting covariance function
in~\eqref{VarLim}. Let $q_\alpha$ denotes the quantile of
$\sup_{r \in [0,R]}	|Y(r)|$ defined by $P(\sup_{r \in [0,R]}
|Y(r)| \leq q_\alpha) = 1-\alpha$. It follows that under
$\mathcal{H}_0$,
\begin{equation}\label{eq:quantileKS}
\P(\sup_{r \in [0,R]}
|\sqrt{n}(\hat{K}_{e,n}(r)-\E\hat{K}_{e,n}(r))| \leq q_\alpha)
\approx_{n \rightarrow \infty} 1-\alpha.	
\end{equation}
Note that in place of the supremum in~\eqref{eq:quantileKS}, we
could have used the integral over $[0,R]$. Altenatively, one might use either the Cramer-von Mieses type test or the $\chi^2$-test presented
in~\cite{heinrich91}.

\subsection{Simulation study} 

We illustrate the goodness-of-fit test presented above in a
simulation study where we estimate the rejection rate of
$\mathcal{H}_0$ when $\xx$ is indeed a realisation of $\PP_0$ and
when it is a realisation of an alternative
model having the same intensity as $\PP_0$. All point processes
considered have intensity $1$. Each rejection rate has been
estimated, with $\alpha=5\%$, by simulating $10000$ point
patterns $\xx$ on $W_n$ for $n=20^2, 50^2, 100^2, 200^2$.
Moreover, the statistic in~\eqref{eq:quantileKS} has been
computed for $R=1,2,3,4,5$. All the simulations have been done in
R with the package \texttt{spatstat} and the border edge
correction was used.

To shorten, we denote a Poisson point process by $Poi$,  
a log-Gaussian Cox process (LGCP) with exponential covariance
function, variance $\sigma^2$, and scale parameter $a$ by
$LGCP_a(\sigma^2)$, and a Strauss point process with interaction
parameter $\gamma$ and interaction radius $0.4$ by
$Str_{0.4}(\gamma)$. The dependence on the activity parameter
$\beta$ of the Strauss process is omitted in the notation as we
always determine it by simulation so that the intensity of the
process is $1$. 


We first let $\PP_0$ be a Poisson point process with known
intensity $\rho=1$. In that case we have $K(r)=\pi r^2$ and
$C_{\PP_0}(r_1,r_2)=2\pi\min(r_1,r_2)^2/\rho^2+4\pi^2
r_1^2 r_2^2 /\rho$. We have discretized the segment $[0,R]$ by
steps of $0.1$ and estimate empirically the quantile $q_\alpha$
in~\eqref{eq:quantileKS} with $100000$ realisations of a
multivariate centered Gaussian random variable with covariance
matrix $(C_{\PP_0}(r_1,r_2))_{r_1,r_2 \in D_R}$, where $D_R
= \lbrace 0.1x, \ x=0,1,2,\ldots, 10R  \rbrace$. We estimate the
rejection rate of $\mathcal{H}_0$ over $10000$ realisations of
$\PP_0$ and $10000$ realisations of an $LGCP_2(0.2)$ model. The results are
reported in Table~\ref{table:poisson1}.

Second, we repeat the same procedure but with $\PP_0\sim
LGCP_2(\sigma^2)$ for $\sigma=0.2, 1$, and $\PP_0\sim
Str_{0.4}(\gamma)$, for $\gamma=0.2, 0.5, 0.8$. Using 100000
realisations, the intensity of the point process is estimated to
be $1$ when $\beta=1.556, 1.298, 1.107$ for $\gamma=0.2,0.5,0.8$,
respectively. Note that
Inequality~\eqref{eq:tau_condition} holds for these
values. Then, $C_{\PP_0}$ and $\E\hat{K}_{e,n}(r)$ have been
estimated using $10000$ realisations of $LGCP_2(\sigma^2)$ and
$Str_{0.4}(\gamma)$. The rejection rates have been estimated over
$10000$ realisations of $\PP_0$ and $10000$ realisations of a
Poisson process with intensity $1$. Due to computational power
limitation, we have only run perfect
simulations of the Strauss process up to $n=100^2$. We report the
results for  $LGCP_2(0.2)$ and $Str_{0.4}(0.2)$ in
Tables~\ref{table:LGCP02}  and \ref{table:Straussgam02},
respectively, and comment on the remaining cases below.

\begin{table}
	\resizebox{\textwidth}{!}{
		\begin{tabular}{lllllllllll}
			$\PP_0 \sim Poi(1)$			& \multicolumn{5}{c}{$\xx\sim Poi(1)$}						  & \multicolumn{5}{c}{$\xx\sim LGCP_2(0.2)$}   \\\cmidrule(lr){1-1} \cmidrule(lr){2-6}\cmidrule(lr){7-11}
			$W_n$ | $[0,R]$	& $[0,1]$ 	& $[0,2]$ 	& $[0,3]$ 	& $[0,4]$ 	& $[0,5]$ & $[0,1]$ & $[0,2]$ & $[0,3]$ & $[0,4]$ & $[0,5]$ \\  \cmidrule(lr){1-1}\cmidrule(lr){2-6} \cmidrule(lr){7-11}
			$[0,20]^2$  	&  8.0		& 6.8		& 7.0		& 7.7		& 7.8     & 48.5 	& 45.0	  &  43.7	&  43.5	  & 44.4    \\
			$[0,50]^2$  	&  7.1		& 5.2		& 5.6		& 5.6		& 6.0     & 74.3    & 63.1    &  55.8	&  51.1	  & 48.4    \\
			$[0,100]^2$ 	&  7.2		& 6.0		& 5.8		& 5.7		& 5.8     & 97.3    & 89.8    &  78.4	&  67.7	  & 59.4    \\
			$[0,200]^2$ 	&  6.8		& 5.2		& 4.8		& 4.8		& 4.8     & 100     & 100     &  99.4   &  96.0	  & 88.8    \\
		\end{tabular}
	} \caption{Rejection rate (in
	percent) of the null hypothesis  that $\xx$ is a
	realisation of a Poisson point process
	with intensity $1$ on $W_n$, when  $\xx$ in fact comes from either the null model or an $LGCP_2(0.2)$ model, respectively,  using the test
		statistics in~\eqref{eq:quantileKS} on various intervals $[0,R]$.
	}
	\label{table:poisson1}
\end{table}

\begin{table}[h]
	\resizebox{\textwidth}{!}{
		\begin{tabular}{lllllllllll}
			$\PP_0 \sim LGCP_2(0.2)$	& \multicolumn{5}{c}{$\xx \sim LGCP_2(0.2)$}						  & \multicolumn{5}{c}{$\xx\sim Poi(1)$} \\\cmidrule(lr){1-1} \cmidrule(lr){2-6}\cmidrule(lr){7-11}
			$W_n$ | $[0,R]$	& $[0,1]$ 	& $[0,2]$ 	& $[0,3]$ 	& $[0,4]$ 	& $[0,5]$ & $[0,1]$ & $[0,2]$ & $[0,3]$ & $[0,4]$ & $[0,5]$ \\  \cmidrule(lr){1-1}\cmidrule(lr){2-6} \cmidrule(lr){7-11}
			$[0,20]^2$  	& 3.1 		& 3.3		& 3.9		& 4.2		& 4.4     & 0     & 0     & 0 	    & 0 	  & 0     \\
			$[0,50]^2$  	& 3.8 		& 4.0		& 4.4		& 4.6		& 4.7     & 1.0    & 0    & 0 	    & 0 	  & 0     \\
			$[0,100]^2$ 	& 4.1 		& 4.4		& 4.8		& 4.9		& 4.8     & 90.1    & 38.3    & 6.9 	& 0	      & 0    \\
			$[0,200]^2$ 	& 4.3 		& 4.2		& 4.6		& 4.5		& 4.5     & 100    & 100    & 99.6    & 77.9	  & 28.2   \\
		\end{tabular}
	}  \caption{Rejection rate (in
	percent) of the null hypothesis  that $\xx$ is a
	realisation of an  $LGCP_2(0.2)$ on $W_n$, when  $\xx$ in fact comes from either the null model or a $Poi(1)$ model, respectively,  using the test
	statistics in~\eqref{eq:quantileKS} on various intervals $[0,R]$.}
	\label{table:LGCP02}
\end{table}


\begin{table}[h]
	\resizebox{\textwidth}{!}{
		\begin{tabular}{lllllllllll}
			$\PP_0 \sim Str_{0.4}(0.2)$	& \multicolumn{5}{c}{$\xx\sim Str_{0.4}(0.2)$}						  & \multicolumn{5}{c}{$\xx\sim Poi(1)$}     \\ \cmidrule(lr){1-1} \cmidrule(lr){2-6}\cmidrule(lr){7-11}
			$W_n$ | $[0,R]$	& $[0,1]$ 	& $[0,2]$ 	& $[0,3]$ 	& $[0,4]$ 	& $[0,5]$ & $[0,1]$ & $[0,2]$ & $[0,3]$ & $[0,4]$ & $[0,5]$ \\ \cmidrule(lr){1-1}\cmidrule(lr){2-6} \cmidrule(lr){7-11}
			$[0,20]^2$  	& 5.5 		& 6.0		& 6.4		& 7.0		& 7.5     & 34.6   	& 14.6     & 14.3 	& 14.4    & 14.7     \\
			$[0,50]^2$  	& 5.4 		& 5.3		& 5.6		& 5.6		& 5.9     & 100  	& 18.6     & 13.1 	& 11.8    & 11.7     \\
			$[0,100]^2$ 	& 4.9 		& 4.9		& 5.0		& 5.0		& 5.0     & 100  	& 57.0     & 45.0 	& 43.7 	  & 46.6    \\
		\end{tabular}
	} 
	 \caption{Rejection rate (in
		percent) of the null hypothesis  that $\xx$ is a
		realisation of a $Str_{0.4}(0.2)$ on $W_n$, when  $\xx$ in fact comes from either the null model or a $Poi(1)$ model, respectively,  using the test
		statistics in~\eqref{eq:quantileKS} on various intervals $[0,R]$.}
	\label{table:Straussgam02}
\end{table}

\subsection{Discussion}

The results reported on
Tables~\ref{table:poisson1}-\ref{table:Straussgam02} are in
agreement with Theorem~\ref{mainthm}. When $\PP_0$ is a Poisson
point process or $LGCP$ with variance $\sigma^2=0.2$, we recover
the correct type I error rate as soon as $n\geq 100^2$ and $R\geq
2$. When $\PP_0$ is a LGCP with larger variance $\sigma^2=1$, the
type I error was always estimated around $2\%$ and we actually
need $n\geq 300^2$ to recover the correct type I error meaning
that results in this setting would hold only when a very large
number of points are observed. 
In the case of
the Strauss process, the correct type I error is always found
when $n\geq 50^2$.

When tested against an alternative hypothesis,
the test is good at detecting deviations
from the Poisson point process, see Table~\ref{table:poisson1}.
When $\PP_0\sim LGCP_2(0.2)$ or $LGCP_2(1)$, results are bad.
This is due to the large variance of $\hat{K}_{e,n}(r)$ in these
cases leading to a large value of the quantile $q_\alpha$.
Consequently, better results are obtained for small values of $R$
and large windows. In the case of Strauss processes with
$\gamma=0.2$, the assumption that Ripley's $K$-function is the
one of the Strauss process is correctly rejected only for small
values of $R$ and $n\geq 50^2$. As we increase $\gamma$, the
power decreases, as the null model comes closer to the Poisson
process. Hence, depending on the alternative hypothesis
considered, we suggest to use another functional than the
supremum in~\eqref{eq:quantileKS}.

In the case where the number of points is very large,
i.e. more than some millions, it may not be feasible to estimate
$C_{\PP_0}$ and $\E\hat{K}_{e,n}(r)$ through simulation of
realisations of $\PP_0$ as done in our simulation study for the
LGCP and Strauss process. In this case, both $C_{\PP_0}$ and
$\E\hat{K}_{e,n}(r)$ need to be known which,   
although relying on numerical integrations, holds for LGCPs.
Therefore, when the null model is assumed to be a Poisson point
process or LGCP, our goodness-of-fit test only requires the
estimation of the quantile $q_\alpha$ which can easily be
obtained through simulation of Gaussian paths. Therefore,
assuming a known intensity, the goodness-of-fit test proposed in
Section~\ref{sec:gooftest} only take some minutes to return an
output.


\section{Proof of Theorem \ref{thmfindim}}\label{sec:proofs}
\subsection{ The case of conditionally $m$-dependent processes}

In this section, we prove Theorem \ref{thmfindim} for
conditionally $m$-dependent processes.  When no edge corrections
are present, the proof follows  directly from \cite[Thm.
1.13]{yogesh} once we can show the following variance lower
bound.

 
\begin{prop}
	\label{prop:variance lower bound}
	Let $\PP$ be a conditionally $m$-dependent point process having
	fast decay of correlations and satisfying condition {\bf
		(R)}. Then, for all $p\in\N$, $s_1,\ldots,s_p\in \R$, and
	$r_{1},\ldots, r_{p} \in [r_0,R]$, we have
	\begin{equation*}
	\liminf_{n \to \infty} n \Var\Big( \sum_{i=1}^p s_i \hat{K}_n(r_i) \Big) > 0.
	\end{equation*}
\end{prop}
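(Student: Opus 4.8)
The plan is to prove linear growth of the variance by a spatial conditional-variance decomposition, adapting the strategy of \cite{bchs20}. Write $\zeta=\sum_{i=1}^p s_i\xi_{r_i}$ for the combined score \eqref{score} and $\Sigma_n=\sum_{x\in\PP_n}\zeta(x,\PP_n)$, so that the assertion is equivalent to $\liminf_n \tfrac1n\Var(\Sigma_n)>0$. After discarding trailing vanishing coefficients and merging equal radii, I may assume the radii are distinct and $s_p\neq0$; the feature I will exploit is that a pair at distance $d\in(r_{p-1},r_p]$ contributes to $\zeta$ with total weight $s_p\neq0$, while a pair at distance $d>r_p$ contributes weight $0$.

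First I would tile $W_n$ by $N=\Theta(n)$ congruent boxes $Q_j$ of side $5\tilde R$, and inside each the concentric box $Q_j'$ of side $\tilde R$. Since $\zeta(x,\cdot)$ depends only on $\PP\cap B_R(x)$ with $R\le\tilde R$ (Lemma \ref{A2}) and the $R$-neighbourhood of $Q_j'$ lies inside $Q_j$, the contribution of $\PP\cap Q_j'$ to $\Sigma_n$ is localized in $Q_j$. Conditioning on $\mathcal G=\sigma(\Lambda,\PP\setminus\bigcup_j Q_j')$, the blocks $\PP\cap Q_j'$ are mutually independent by conditional $m$-dependence, as distinct $Q_j'$ are more than $4\tilde R>m$ apart. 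By the law of total variance and, given $\mathcal G$, the orthogonality of the Hoeffding (ANOVA) decomposition of $\Sigma_n$ over these independent blocks,
\begin{equation*}
\Var(\Sigma_n)\ge\E[\Var(\Sigma_n\mid\mathcal G)]\ge\sum_j\E\big[\Var\big(\E[\Sigma_n\mid\mathcal G,\PP\cap Q_j']\,\big|\,\mathcal G\big)\big].
\end{equation*}
Localization then identifies the $j$-th summand with the conditional variance of the local score of $Q_j$: given $\mathcal G$ the shell $Q_j\setminus Q_j'$ is frozen and the remaining blocks lie at distance $>R$, so $\E[\Sigma_n\mid\mathcal G,\PP\cap Q_j']$ equals a $\mathcal G$-measurable constant plus the $\zeta$-contribution of the pairs meeting $Q_j'$, a function of $\PP\cap Q_j'$ alone.

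The core step is a uniform lower bound $\E[\Var(\cdots\mid\mathcal G)]\ge c_0>0$ for each interior box, supplied by condition {\bf (R)} with its split taken as $r_{p-1}<r_p$ together with the conditional-variance lemma of Appendix \ref{app:condvar}. On the positive-probability set of shell configurations where $\min_i\P(F_i\mid\mathcal H)>0$ (with $\mathcal H=\sigma(\Lambda,\PP\setminus Q_j')$), the event $F_1$ forces no pair within $R$, so $Q_j'$ contributes $0$ to $\zeta$; the event $F_2$ forces all pairs beyond $r_{p-1}$ yet a pair within $r_p$, and since positivity of $\P(F_1\mid\mathcal H)$ means the frozen shell carries no pair within $R$, this pair must involve a point of $Q_j'$ and lie in $(r_{p-1},r_p]$. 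Hence on $F_2$ the local score equals $\tfrac{2}{\rho^2}s_p$ times a positive integer, a value of definite sign and magnitude $\ge\tfrac{2}{\rho^2}|s_p|$. The conditional-variance lemma turns this definite gap between two conditionally-positive-probability events into $\Var(\cdot\mid\mathcal H)\ge c\,(\tfrac{2}{\rho^2}|s_p|)^2\min_i\P(F_i\mid\mathcal H)$; taking expectations and invoking $\E[\min_i\P(F_i\mid\mathcal H)]>0$ from {\bf (R)} gives $c_0>0$, independent of $j$ and $n$ by stationarity. Summing over the $\Theta(n)$ interior boxes yields $\Var(\Sigma_n)\ge Nc_0\gtrsim n$, whence $\liminf_n\tfrac1n\Var(\Sigma_n)>0$.

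I expect the per-box bound to be the main obstacle. The delicate point is that $\zeta$ is a \emph{signed} combination, so a generic pivotal configuration need not change $\Sigma_n$ in a controlled way; this is exactly why the pivotal interval must be placed at the top, $(r_{p-1},r_p]$, where the contribution weight is the single coefficient $s_p\neq0$ and pairs beyond $r_p$ are weightless, guaranteeing a definite sign. A second technical care is matching the events and conditioning in {\bf (R)} (which conditions on $\PP\setminus W_{\tilde R^d}$ and constrains configurations in boxes of side $5\tilde R$ and $3\tilde R$) to the block structure of the decomposition, and checking that the conditional independence and the localization survive so that {\bf (R)} applies verbatim to each interior box after translation by stationarity.
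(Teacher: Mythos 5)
Your proposal is correct and follows essentially the same route as the paper: a spatial block decomposition at scale $\Theta(\tilde R)$, conditioning on $\sigma(\Lambda,\PP)$ outside the small inner boxes, conditional $m$-dependence to split the conditional variance over blocks, and a per-block lower bound from Condition \textbf{(R)} with the pivotal interval $(r_{p-1},r_p]$ together with Lemma~\ref{23Lem}. The only differences are cosmetic — you use the first-order Hoeffding/ANOVA projection inequality where the paper splits $T_n(W_n)=T_n(A_{3\tilde R})+T_n(C_{3\tilde R})$ and invokes Lemma~\ref{lemma:ineq conditional variance}, and your tiling constants differ slightly — and your extra observation that on $F_2$ the relevant close pair must meet the inner box (forced by $\P(F_1\mid\mathcal H)>0$) correctly patches the one place where your localized score differs from the paper's $T_n(W_{(3\tilde R)^d})$.
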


To obtain the proof in the edge corrected case, we need to consider the deviation
\begin{equation}\label{diffK}
\mathcal{E}_{e,n}(r)  = (\hat{K}_n(r) - \E \hat{K}_n(r) ) - (\hat{K}_{e,n}(r) - \E \hat{K}_{e,n}(r) )
\end{equation}
between the edge corrected and uncorrected centered
$K$-functions. We show an upper bound on the variance of
$\mathcal{E}_{e,n}(r)$ in the following lemma together with a
variance bound that we are going to need later in the proof of
Theorem \ref{mainthm}. To state these results, we define
for an interval $I=(r_1,r_2]\subseteq [r_0,R]$,
\begin{equation*}
\hat{K}_{e,n}(I)  =  \hat{K}_{e,n}(r_2)  -  \hat{K}_{e,n}(r_1).	
\end{equation*}

\begin{lem}\label{varbound_all} 
	Let $\PP$ be a point process
	having fast decay of correlations and $r_0,R$ 
	be two constants such that $R>r_0>0$. Let $e=e_{i,n}$, $i\in\{1,\ldots,5\}$, be one
	of the edge correction factors in Section \ref{sec:K}. There
	is a constant $C>0$ such that for all $r\in [r_0,R]$ and
	$I=(r_1,r_2]\subseteq [r_0,R]$,
	\begin{align}\label{varKhat}
	n\Var(\hat{K}_{n,e}(I)) {}&\leq C|I|\\
	n\Var(\mathcal{E}_{n,e}(r)) {}&\leq Cn^{-1/d}.\label{varKtilde}
	\end{align}
\end{lem}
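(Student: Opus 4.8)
The plan is to write each quantity as $n^{-1}$ times a sum over ordered pairs of distinct points and to control the variance of such pair-sums by a single general estimate. Let $\PP_n^{\neq}$ denote the set of ordered pairs of distinct points of $\PP_n$. Both $\hat{K}_{e,n}(I)$ and $\mathcal{E}_{e,n}(r)$ have the form $n^{-1}\sum_{(x,y)\in\PP_n^{\neq}}k_n(x,y)$ for a kernel $k_n$ that vanishes once $|x-y|>R$: for \eqref{varKhat} take $k_n(x,y)=\rho^{-2}\mathds{1}_{\{r_1<|x-y|\le r_2\}}e_n(x,y)$, and for \eqref{varKtilde} take $k_n(x,y)=\rho^{-2}\mathds{1}_{\{0<|x-y|\le r\}}(1-e_n(x,y))$, the deterministic means in \eqref{diffK} not affecting the variance. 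The first step is therefore to prove a general bound
\[
\Var\Big(\sum_{(x,y)\in\PP_n^{\neq}}k_n(x,y)\Big)\le C\Big(\int_{W_n^2}k_n(x,y)^2\,\d x\,\d y+\int_{W_n}\bar k_n(x)^2\,\d x+\int_{W_n}\breve k_n(x)^2\,\d x\Big),
\]
where $\bar k_n(x)=\int_{W_n}|k_n(x,y)|\,\d y$, $\breve k_n(x)=\int_{W_n}|k_n(y,x)|\,\d y$, and $C$ depends only on $R$, on $\phi$, and on the factorial moment densities.

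To obtain this I would expand $\E\big[(\sum k_n)^2\big]-\big(\E\sum k_n\big)^2$ through the factorial moment measures, grouping the quadruples $(x,y,u,v)$ by how many of the four points coincide. The two-point (diagonal) terms involve $\rho^{(2)}$ and are bounded by $C\int_{W_n^2}k_n^2$; the three-point terms involve $\rho^{(3)}$ and, after integrating out the non-shared variable, are bounded by $C\int_{W_n}(\bar k_n^2+\breve k_n^2)$. All factorial moment densities up to order four are bounded, which follows by induction from the defining inequality of fast decay of correlations using $\phi\le1$. The genuinely four-distinct term is $\int k_n(x,y)k_n(u,v)\,[\rho^{(4)}(x,y,u,v)-\rho^{(2)}(x,y)\rho^{(2)}(u,v)]$, and here fast decay gives the pointwise bound $C\phi(c\,\dist(\{x,y\},\{u,v\}))\le C\phi(c(|x-u|-2R)_+)$, since the kernels force $|x-y|,|u-v|\le R$. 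Integrating out $y$ and $v$ produces $\bar k_n$, and Young's convolution inequality applied to the fast-decreasing, hence integrable, profile $w\mapsto\phi(c(|w|-2R)_+)$ bounds the remaining double integral by $C\int_{W_n}\bar k_n^2$. This yields the displayed estimate.

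With this in hand, \eqref{varKhat} reduces to two volume computations. The factors $e_{i,n}$, $i\in\{1,2,3,5\}$, are uniformly bounded on $\{x,y\in W_n:|x-y|\le R\}$ once $n$ is large, so the annulus kernel satisfies $\int_{W_n^2}k_n^2\le Cn\,(r_2^d-r_1^d)\le Cn|I|$ and $\bar k_n(x),\breve k_n(x)\le C|I|$, whence $n\Var(\hat{K}_{e,n}(I))=n^{-1}\Var(\sum k_n)\le C(|I|+|I|^2)\le C|I|$. The border correction $e_{4,n}$ must be treated separately because it depends on $r$: I would split $\hat{K}_{4,n}(I)$ into an annulus part, handled exactly as above, and a remainder coming from the difference of the prefactors $\mathds{1}_{W_n\ominus B_r(0)}(x)\,n/|W_n\ominus B_r(0)|$ at $r=r_1,r_2$. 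This remainder has a deep-interior piece of size $O(n^{-1/d}|I|)$ and a piece supported on a boundary shell of volume $O(n^{(d-1)/d}|I|)$, and the general estimate bounds each contribution to $n\Var$ by $C|I|$.

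For \eqref{varKtilde} the same estimate is applied to $k_n(x,y)=\rho^{-2}\mathds{1}_{\{0<|x-y|\le r\}}(1-e_n(x,y))$, and the decisive point is the size and support of $1-e_n$. For the translation and rigid-motion corrections $|1-e_n|\le Cn^{-1/d}$ uniformly, so $\int_{W_n^2}k_n^2$ and $\int_{W_n}\bar k_n^2$ are both $O(n^{1-2/d})$ and $n\Var(\mathcal{E}_{e,n}(r))\le Cn^{-2/d}\le Cn^{-1/d}$. For the isotropic correction $1-e_{5,n}$ is bounded but vanishes unless the centre $x$ lies within distance $R$ of $\partial W_n$, a region of volume $O(n^{(d-1)/d})$; hence both integrals are $O(n^{(d-1)/d})$ and $n\Var(\mathcal{E}_{e,n}(r))\le Cn^{-1/d}$. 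The border correction is the sum of an interior piece of the first type and a boundary piece of the second, and $\Var(X+Y)\le2\Var X+2\Var Y$ gives the claim. I expect the main obstacle to be the self-contained verification of the general variance estimate, in particular the four-point term where one must extract the annulus/boundary volume factor correctly through the $\phi$-integral, together with the bookkeeping forced by the $r$-dependence of the border correction in \eqref{varKhat}; the remaining corrections are routine once the general estimate is established.
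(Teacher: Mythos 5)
Your proposal is correct and follows essentially the same route as the paper: both expand the variance through the factorial moment densities into two-, three-, and four-point contributions, bound the first two by boundedness of $\rho^{(2)},\rho^{(3)}$ together with the volume of the annulus, control the four-distinct-point term via fast decay of correlations (your Young's-inequality step is the paper's direct integration of a power-law majorant of $\phi$ in slightly different packaging), and then obtain \eqref{varKtilde} by noting that $1-e_{i,n}$ is either uniformly $O(n^{-1/d})$ or bounded and supported on a boundary shell of volume $O(n^{1-1/d})$. The one place you go beyond the paper is the explicit treatment of the $r$-dependence of the border correction $e_{4,n}$ in \eqref{varKhat}, which the paper's reduction to the common form \eqref{generalK} with a single factor $e$ silently glosses over; your split into an annulus part plus a prefactor-difference remainder is the right fix, and the order-of-magnitude estimates you give for both pieces check out.
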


We first show how Theorem \ref{thmfindim} follows from these two
results and then give {their} proofs.

\begin{proof}[Proof of Theorem \ref{thmfindim}($m$-dependent case)]
	We first consider the uncorrected case and show the joint 
	convergence of $(\hat{K}_n(r_1),\ldots,\hat{K}_n(r_p))$.	
	By the Cram\'{e}r-Wold device, it
	suffices to show a 
	pointwise central limit theorem for all
	linear combinations of
	$\hat{K}_n(r_1),\ldots,\hat{K}_n(r_p)$. This follows directly
	from \cite[Thm. 1.13]{yogesh}, where the assumptions are
	satisfied by Lemma \ref{A2} and the asymptotic variance lower
	bound in Proposition \ref{prop:variance lower bound}.
	
	Second, by Lemma \ref{varbound_all}, we have that
	\begin{equation*}
	\lim_{n\to \infty} n\Var(\mathcal{E}_{e,n}(r)) = 0.
	\end{equation*}
	Therefore, for each $j=1,\ldots,p$ we have that 
	\begin{equation*}
	\sqrt{n}(\hat{K}_n(r_j) - \E \hat{K}_n(r_j) ) - \sqrt{n}(\hat{K}_{e,n}(r_j) - \E \hat{K}_{e,n}(r_j))
	\end{equation*}
	converges to 0 in probability, so the statement of Theorem \ref{thmfindim} in the edge corrected case follows from the uncorrected case.
\end{proof}

It remains to prove Proposition \ref{prop:variance lower bound} and Lemma \ref{varbound_all}.

\begin{proof}[Proof of Proposition~\ref{prop:variance lower bound}]
Let  $p\in\N$, $s_1,\ldots,s_p\in \R$ and
$r_0\leq r_{1}<\dotsm<r_{p}\leq R$ be given. For any Borel set  $A \subset \R^{d}$, we introduce the notation
\begin{equation*}
T_{n}(A)= \sum_{i=1}^{p} s_{i} \sum_{x\in A\cap \PP_n}
\sum_{y\in \PP_n\setminus \lbrace x \rbrace} \mathds{1}_{\{|x-y|\leq r_{i}\}}.
\end{equation*} 
As a special case, 
\begin{equation*}
T_n(W_{n}) =
\sum_{i=1}^{p}  s_{i} \sum_{x\in \PP_n}
\sum_{y\in \PP_n \setminus \lbrace x \rbrace} 
\mathds{1}_{\{|x-y|\leq r_{i}\}} = n\rho^{2} \sum_{i=1}^p s_i\hat{K}_n(r_i),
\end{equation*}
so what we need to show is
\begin{equation}\label{eq: var order}
\liminf_{n\to \infty} \frac{1}{n}\Var ( T_n(W_n)) >0.
\end{equation}
	
For any Borel set $A \subset \R^{d}$, we let $\Lambda^{\PP}_{A}=\sigma(\Lambda,\PP\cap A)$ to shorten notation  and recall that $\tilde{R} = \max(m,R)$. For $t>0$, let
 \begin{equation*}
C_{t} = \bigcup_{z \in \Z^{d}} (6 \tilde{R} z + W_{t^{d}}) \cap W_{n} 
\end{equation*}
 be the union of cubes of side length $t$ centered at the vertices of the lattice
$6\tilde{R} \Z^{d}$. Finally, let
\begin{equation*}
  A_{t} = W_{n} \setminus C_{t}.
\end{equation*}
From the law of total variance, it follows that
\begin{equation}\label{eq:bound var 1}
  \Var T_{n}(W_{n}) \geq  \E \Var( T_{n}(W_{n}) | \Lambda^{\PP}_{A_{\tilde{R}}}).
\end{equation}
We have that $T_{n}(W_{n}) = T_{n}(A_{3\tilde{R}}) + T_{n}(C_{3\tilde{R}})$ and $T_{n}(A_{3\tilde{R}})$  depends only on $\PP$ inside
$A_{3\tilde{R}} \oplus B_R(o) \subset A_{\tilde{R}}$, where $\oplus$ denotes
Minkowski set addition. Thus $T_{n}(A_{3\tilde{R}})$  is measurable with respect to $A_{\tilde{R}}$ and hence
$\Lambda^{\PP}_{A_{\tilde{R}}}$. 
Thus, by Lemma \ref{lemma:ineq conditional variance} we have 
\begin{equation}\label{eq:bound var 2}
 \Var( T_{n}(W_{n}) | \Lambda^{\PP}_{A_{\tilde{R}}}) = \Var( T_{n}(C_{3\tilde{R}}) | \Lambda^{\PP}_{A_{\tilde{R}}}).
\end{equation}
The squares in $C_{3\tilde{R}}$ are separated by at least distance $3\tilde{R}$, and
$ T_{n}(C_{3\tilde{R}})$ only depends on the points of $\PP$ within distance $R\leq \tilde{R}$ from $C_{3\tilde{R}}$. Thus, the conditional
$m$-dependence and \eqref{eq:bound var 1}--\eqref{eq:bound var 2}
yield
\begin{equation*}
  \Var T_{n}(W_{n}) \geq \sum_{z \in \Z^{d}}  \E  \Var( T_{n}( 6 \tilde{R} z + W_{(3\tilde{R})^{d}}) | \Lambda^{\PP}_{A_{\tilde{R}}}).
\end{equation*}
Since for all $z\in \Z^{d}$,
$ \PP \cap A_{\tilde{R}} \subset  \PP \cap (6 \tilde{R} z + W_{\tilde{R}^{d}})^c$,
  Lemma~\ref{lemma:ineq conditional variance} yields,
\begin{align}
  \nonumber
  \Var T_{n}(W_{n}){}& \geq
  \sum_{z \in \Z^{d}}  \E  \Var\Big( T_{n}(6 \tilde{R} z + W_{(3\tilde{R})^{d}}) | \Lambda^\PP_{(6\tilde{R}z+ W_{\tilde{R}^{d}})^c} \Big)\\ \label{eq:bound var 3}
{}&\geq
\sum_{\substack{z \in \Z^{d}\\ 6\tilde{R}z+ W_{(5\tilde{R})^{d}} \subset W_n } }  \E  \Var\Big( T_{n}( W_{(3\tilde{R})^{d}}) | \Lambda^\PP_{(W_{\tilde{R}^{d}})^c} \Big).
\end{align}
The last inequality used stationarity and the fact that $T_{n}(6 \tilde{R} z + W_{(3\tilde{R})^{d}})$ only depends on $\PP \cap (6 \tilde{R} z + W_{(5\tilde{R})^{d}})$.

We may assume $s_p\neq 0$. Let
\begin{align}
\begin{split}\label{F1F2}
F_{1} ={}& \lbrace  \forall (x,y) \in (\PP\cap W_{(5\tilde{R})^{d}})_2^{\neq}: |x-y|>R \rbrace \\
F_{2} ={}& \left\lbrace \forall (x,y) \in (\PP\cap W_{(5\tilde{R})^{d}})_2^{\neq}: |x-y|>r_{p-1}\right\rbrace \\
&\cap \left\lbrace  \exists (x,y) \in (\PP\cap W_{(3\tilde{R})^{d}})_2^{\neq}: |x-y|\leq r_{p} \right\rbrace. 
\end{split} 
\end{align}
Then, 
\begin{align*}
F_{1} {}&\subset \lbrace  T_{n}(W_{(3\tilde{R})^{d}}) \in I_{1} \rbrace\\
F_{2} {}&\subset \lbrace | T_{n}(W_{(3\tilde{R})^{d}}) | \in I_{2} \rbrace,
\end{align*}
where $I_{1} =\lbrace 0 \rbrace$ and
$I_{2}= [|s_p|, \infty)$.  Therefore, by applying
Lemma~\ref{23Lem} with $Y= T_{n}(W_{(3\tilde{R})^{d}})$, we have
\begin{align*}
\Var\Big( T_{n}(W_{(3\tilde{R})^{d}} ) | \Lambda^\PP_{(W_{\tilde{R}^{d}})^c} \Big)  &\geq
\frac{s_p^2}{4} \min_{i \in \{1,2\}} \P\Big( |T_{n}(W_{(3\tilde{R})^{d}})| \in I_{i}| \Lambda^\PP_{(W_{\tilde{R}^{d}})^c}\Big) \\
&\geq   \frac{s_p^2}{4} \min_{i \in \{1,2\}} \P\Big(F_{i}| \Lambda^\PP_{(W_{\tilde{R}^{d}})^c} \Big).
\end{align*}
Thus, by Condition {\bf (R)},
\begin{equation*}
\E \Big(\Var\Big( T_{n}(W_{(3\tilde{R})^{d}}) | 
\Lambda^\PP_{(W_{\tilde{R}^{d}})^c} \Big) \Big)> 0.
\end{equation*}
This, together with \eqref{eq:bound var 3}, shows \eqref{eq: var
order} since the number of terms in~\eqref{eq:bound var 3} is of
order $n$.  
\end{proof}

\begin{proof}[Proof of Lemma~\ref{varbound_all}]
	For 	$x\in \R^{d}$, let $A_{I}(x) = \{y\in \R^{d}\mid r_1 < |x-y|\leq r_2\}$ denote the annulus
centered at $x$ and having inner and outer radius $r_{1}$ and $r_{2}$, respectively. Note that  $|A_I(x)| = \kappa_d (r_2^d-r_1^d)$ and that $y\in A_I(x)$ is equivalent to $x\in A_I(y)$.

	Both $n\hat{K}_{e,n}(I)$ and $n\mathcal{E}_{e,n}(r)$ take the form
	\begin{equation}\label{generalK}
	\frac{1}{\rho^2} \sum_{x,y \in (\PP_n)_2^{\neq} } \mathds{1}_{\{x\in A_I(y) \}}e(x,y),
	\end{equation}
	where  $n\hat{K}_{e,n}(I)$ corresponds to $e=e_{i,n}$, and $n\mathcal{E}_{e,n}(r)$ corresponds to $e=1-e_{i,n}$ and $I=(0,r] $.

	Since
	  \begin{align*}
	&\E \bigg(  \sum_{x,y \in (\PP_n)_2^{\neq} } \mathds{1}_{\{r_{1}<|x-y|\leq r_{2}\}}  e(x,y) \bigg)^{2}
	\\&\quad =\E \bigg( \sum_{x,y,u,v \in (\PP_n)_4^{\neq}}   
	{e}(x,y){e}(u,v)\mathds{1}_{\{x\in A_I(y)\}} \mathds{1}_{\{u\in A_I(v) \}}  
	\bigg) \\
	& \quad \quad +
	\E\bigg( \sum_{x,y,u \in (\PP_n)_3^{\neq} } ({e}(x,y) + {e}(y,x))({e}(y,u) + {e}(u,y))
	\mathds{1}_{\{x\in A_I(y)\}}  \mathds{1}_{\{u\in A_I(y) \}}  
	\bigg)\\
	& \quad \quad +
	\E \bigg( \sum_{x,y \in (\PP_n)_2^{\neq} } ({e}(x,y)^2 + {e}(x,y){e}(y,x)) \mathds{1}_{\{x\in A_I(y)\}}  
	\bigg),
	\end{align*}
	we may write the variance of \eqref{generalK} as $\mathcal{I}_{1} + \mathcal{I}_{2} + \mathcal{I}_{3}$ where
	\begin{align*}
	\mathcal{I}_{1}=&\int_{W_{n}^{4}} 
	{e}(x,y){e}(u,v)\mathds{1}_{\{x\in A_I(y)\}}\mathds{1}{\{u\in A_I(v) \}}  \\
	&\quad \times (\rho^{(4)}(x,y,u,v) -\rho^{(2)}(x,y) \rho^{(2)}(u,v))\d x \d y \d u \d v \\
	\mathcal{I}_{2}=&
	\int_{W_{n}^{3}} ({e}(x,y) + {e}(y,x))({e}(y,u) + {e}(u,y))
	\mathds{1}_{\{x\in A_I(y)\}}  \mathds{1}_{\{u\in A_I(y) \}}  
	\rho^{(3)}(x,y,u)  \d x \d y \d u \\
	\mathcal{I}_{3}=&
	\int_{W_{n}^{2}} ({e}(x,y)^2 + {e}(x,y){e}(y,x)) \mathds{1}_{\{x\in A_I(y)\}}  
	\rho^{(2)}(x,y) \d x \d y.
	\end{align*}
		
	First consider $n\hat{K}_{e,n}(I)$.  
	Definition~\ref{def:expdecay} implies that $\rho^{(p)}$ is
	bounded on $\R^{pd}$, see also~\cite[(1.11)]{yogesh}. Moreover, the edge correction factors $e_{i,n} $ are  bounded whenever $n>(2R)^d$. Thus, there exists a
	constant $C_1>0$ such that 
	\begin{equation*}
	\mathcal{I}_{2} 
	\leq
	C_1  \int_{W_{n}} \int_{\R^{d}} \int_{\R^{d}}  \mathds{1}_{ \{x \in A_{I}(y)\} } \mathds{1}_{\{u \in A_{I}(y)\}}
	\d u \d x \d y = C_1 |W_n| |A_I(o)|^2.
	\end{equation*}
	Since $r_{1},r_{2} \in [0,R]$, there exists a constant $C_2$,
	depending only on $R$ and $d$, such that
	$|A_I(o)|=\kappa_d (r_{2}^{d}-r_{1}^{d}) \leq C_2 |I|$. Hence, $\mathcal{I}_{2} \leq C_3 |W_{n}| |I|^{2}$.
	Similarly, there exist constants $C_4,C_5>0$ such that
	\begin{equation*}
	\mathcal{I}_{3}
	\leq 
	C_4 \int_{W_{n}} \int_{\R^{d}} \mathds{1}_{\{x \in A_{I}(y)\}}  \d x \d y 
	\leq {C_5  |W_{n}|} |I|.
	\end{equation*}

	By Definition~\ref{def:expdecay}, there exist constants $C_6,C_7,C_8>0$ and a function $\phi(t) \leq C_6( t^{-(d+1)}\wedge 1)$  such that
	\begin{align*}
	\mathcal{I}_{1}{}&
	\leq C_7 \int_{W_{n}^{4}}
	\mathds{1}_{\{ x\in A_I(y) \}}  \mathds{1}_{\{ u \in A_I(v) \}}
	\phi(\dist(\{x,y\}, \{u,v\})) \d x \d y \d u \d v\\
	&\leq
	C_8 \int_{W_{n}^{4}}
	\mathds{1}_{\{x\in A_I(y) \}}  \mathds{1}_{\{ u \in A_I(v) \}}
	\Big(\frac{1}{\dist(\{x,y\}, \{u,v\})^{d+1}} \wedge 1 \Big) \d x \d y \d u \d v.
	\end{align*}
	Renaming variables appropriately, we may assume that $\dist(\{x,y\}, \{u,v\})=|y-u|$. Then, 
	\begin{align*}
	\mathcal{I}_1 
	{}&\leq C_9 \int_{W_{n}} \int_{\R^{d}}  \int_{\R^{d}} \int_{\R^{d}}
	\mathds{1}_{\{x\in A_{I}(y)\}}  \mathds{1}_{\{u \in A_{I}(v)\} }
	\Big(\frac{1}{|y-u|^{d+1}} \wedge 1\Big)
	\d x \d v \d y \d u \\
	&\leq C_{10} |W_{n}||I|^2.
	\end{align*}
	Together, the bounds on $ \mathcal{I}_1$, $\mathcal{I}_2$, and 
	$\mathcal{I}_3$ show \eqref{varKhat}.
	
	Next consider $\Var(n\mathcal{E}_{e,n}(r))$.	
	For $e=1-{e}_{i,n}(x,y)$, $i=4,5$, we note that $e$ is again bounded. Moreover, $e(x,y)$ vanishes outside $(W_{n,2r})^2$, where we use the notation
	\begin{equation*}
	W_{n,s} = W_n \setminus (W_n \ominus B_s(o)) =W_n\backslash \Big[-\tfrac{n^{1/d }}{2} + s, \tfrac{n^{1/d}}{2} - s\Big]
	\end{equation*}
	for the set of points that are within distance $s$ from the boundary of $W_n$. This implies that the factor $n=|W_n|$ in the variance bound can be replaced by $|W_{n,2r}|$.
	Note that 
	\begin{equation*}
	|W_{n,2r}|   = n - (n^{1/d} - 4r)^d  \leq C_{11}n^{1-1/d}.
	\end{equation*}
	Thus, 
	\begin{equation}\label{varKebound}
	n\Var(\mathcal{E}_{e,n}(r)) \leq C_{12}n^{-1/d}.
	\end{equation}  
	
	For $e=1-e_{2,n}$, we note that whenever $|x-y|\leq r$, we have $W_n \ominus B_{r}(o) \subseteq W_n \cap (W_n + x - y)$, so for $n$ large enough
	\begin{equation}\label{volbound}
	|W_n \cap  (W_n + x-y)| \geq (n^{1/d} - 2r)^d \geq  n - C_{13}n^{1-1/d}.
	\end{equation}
	Thus, for some $C_{14}>0$,
	\begin{equation*}
	|{e}_{2,n}(x,y) | = \frac{|W_n| - |W_n\cap (W_n + x-y) |}{|W_n \cap (W_n + x-y)|}\leq  \frac{C_{13}n^{1-1/d}}{n-C_{13}n^{1-1/d}} \leq C_{14}n^{-1/d}. 
	\end{equation*} 
	Using this and proceeding as in the case of $\hat{K}_{e,n}(I)$ yields \eqref{varKtilde}.
	
	The case  $e=1-e_{3,n}$ is very similar. For any rotation $\eta \in SO(d)$, apply the inequality \eqref{volbound} to $|W_n + \eta (x- y)|$. Averaging over all rotations, the inequality is preserved. Proceeding as before, we obtain \eqref{varKtilde}. 
\end{proof}

\subsection{Proof of Theorem \ref{thmfindim} in the Gibbs case}

\begin{proof}[Proof of Theorem \ref{thmfindim}(Gibbs case)]
	We first consider the case without edge corrections.  The modification for edge corrections follows from Lemma \ref{varbound_all} as in the case of conditionally $m$-dependent point processes.
	
	We again use the Cram\'{e}r-Wold device, that is, we need to show a central limit theorem for all linear combinations of $\hat{K}_n(r_1),\ldots,\hat{K}_n(r_p)$, $p\geq 1$,  $0\leq r_1\leq \dotsm \leq r_p \leq R$. Such a linear combination  corresponds to a score function of the type
	\begin{equation*}	
	\xi = \sum_{i=1}^p a_i \xi_{r_i}.
	\end{equation*}
	By \cite[Thm 1.2]{gibbsCLT}, it is enough to show the following four points:
	\begin{enumerate}
		\item $\xi$ is exponentially stabilizing in the sense of \cite[(1.8)]{gibbsCLT}.
		
		\item $\xi$ is translation invariant. 
		
		\item $\xi$ verifies
		\begin{equation*}
		\sup_n \sup_{x\in W_n} \E[\xi(x,\PP\cup{x})^4] < \infty. 
		\end{equation*}

		\item There exists $s>0$ such that
		\begin{equation*}
		\inf_{t\geq s} \E \Var (\sum_{x \in \PP\cap W_t } \xi(x,\PP) | \PP\cap W_s^c) \geq b_0.
		\end{equation*}
	\end{enumerate}
	
	Point 1. is trivially satisfied, since $\xi$ has finite radius of stabilization at most $R$, i.e., $\xi(x,\mathcal{X})$ depends only on $ \mathcal{X}\cap B_R(x)$,  and 2. holds by definition. Point 3. holds because
	\begin{equation*}
	\E(\xi(x,\PP\cup{x})^4) \leq \sum_{i=1}^p |a_i|   \E[\PP(B_R(x))^4] = \sum_{i=1}^p |a_i|  \E[\PP(B_R(o))^4]<\infty.
	\end{equation*}
The last inequality holds because $\PP$ can be given as a thinning of a Poisson process of intensity $\tau$, which has finite moments.
	
	To show 4., we take $s=R^d$ and note that by 
	Lemma \ref{lemma:ineq conditional variance}, and since $\xi$
	has stabilization radius $R$,
		\begin{equation*}
		\inf_{t\geq R^d} \E \Var \Big(\sum_{x \in \PP\cap W_t } \xi(x,\PP) | \PP\cap W_{R^d}^c\Big)=\inf_{t\in [R^d,(2R)^d]}  \E \Var \Big(\sum_{x \in \PP\cap W_t } \xi(x,\PP) | \PP\cap W_{R^d}^c\Big).
		\end{equation*}
		Place two small open balls, $A_1$ and $A_2$, inside $W_{R^d}$ at least distance $r_{p-1}$ apart and both contained inside a ball of diameter $r_p>r_{p-1}$. 
		Define the events
		\begin{align*}
		E'{}&=\{\PP(W_{(3R)^d}\setminus W_{R^d})=0\}\\
		E_1{}&=\{\PP(W_{R^d})=0 \}\\
		E_2{}&=\{\PP(A_1) = \PP(A_2) =1, \PP(W_{R^d}\setminus (A_1\cup A_2)) =0 \}.
		\end{align*}
		Then for every $t\in [R^d,(2R)^d]$, 
		the event $E'\cap E_i$ is contained in the event 		
		\begin{equation*}
					 \bigg\lbrace\big|
			 \sum_{x \in \PP\cap W_{t} } \xi(x,\PP) \big|\in I_i
			 \bigg\rbrace,
		\end{equation*}
		where $I_1=\{0\}$ and $I_2= [|a_p|,\infty)$, and hence by Lemma \ref{23Lem},
		\begin{equation*}
		\Var \bigg(\sum_{x \in \PP\cap W_{t} } \xi(x,\PP) | \PP\cap W_{R^d}^c\bigg) \geq  \frac{a_p^2}{4}\min_{i=1,2} \mathbb{P}(E'\cap E_i|  \PP\cap W_{R^d}^c ).
		\end{equation*}
		Since $E'$ is measurable with respect to $\PP\cap W_{R^d}^c$, we find
		\begin{align*}
		&\mathbb{P}(E'\cap E_2 |  \PP\cap W_{R^d}^c ) \\
		&= \mathds{1}_{E'} \mathbb{P}(\PP(A_1) = \PP(A_2) = 1,\PP(W_{R^d}\setminus(A_1\cup A_2))=0| \PP\cap W_{R^d}^c).
		\end{align*}
		We now apply \cite[Lem. 2.2]{gibbsCLT} to show that
		\begin{align*}
		&\mathbb{P}(E'\cap E_2 |  \PP\cap W_{R^d}^c )\\
		& = \mathds{1}_{E'}\mathbb{P}(\PP(A_1) = \PP(A_2) = 0,\PP(W_{R^d}\setminus(A_1\cup A_2))=0| \PP\cap W_{R^d}^c)\\&
		\geq  e^{-\tau R^d} \mathds{1}_{E'}\mathbb{P}(\PP(A_1) = \PP(A_2) = 1,\PP(W_{R^d}\setminus(A_1\cup A_2))=0| \PP\cap W_{R^d}^c).
		\end{align*}
		Hence 
		\begin{align}\label{eq:gibbs_lower_bound}
		&\E \bigg( \Var \bigg(\sum_{x \in \PP\cap W_{t} } \xi(x,\PP) | \PP\cap W_{R^d}^c\bigg)\bigg) \\ \nonumber
		&\geq \frac{a_p^2}{4} \E (\min_{i=1,2} \mathbb{P}(E'\cap E_i|  \PP\cap W_{R^d}^c ) )\\
		 &\geq  \frac{a_p^2}{4}  e^{-\tau R^d} \E ( \mathds{1}_{E'} \mathbb{P}(\PP(A_1) = \PP(A_2) = 1,\PP(W_{R^d} \setminus(A_1\cup A_2))=0 | \PP\cap W_{R^d}^c)). \nonumber
		\end{align}
		We need to check that the latter expectation is strictly
		positive. Recall that the Gibbs point process has the
		density \eqref{eq:gibbs_def} with respect to the Poisson
		process $Q$ on $W_{R^d}$ when conditioning on $\PP\cap
		W_{R^d}^c=\mathcal{X}_0$. This density is bounded  from
		below on $E_2$ by some $c>0$
		uniformly in all values of
		$\mathcal{X}_0 \in E'$. This is because the denominator
		in the density is bounded from above by 1, and the
		numerator is bounded from below on $E_2\cap E'$. Indeed,
		the maximal number of points in $W_{(3R)^d}$ is two, and
		these points are at distance of at least $r_0$ from each
		other and at least $R$ from any other points. Hence
		$\Delta^\Psi(\mathcal{X},\mathcal{X}_0)$  is bounded from
		above, resulting in a lower bound on the numerator. With
		this bound on the density, we have on $E'$
		\begin{align*}
		&\mathbb{P}(\PP(A_1) = \mathds{1}_{E'} \PP(A_2) = 1,\PP(W_{R^d} \setminus(A_1\cup A_2))=0 | \PP\cap W_{R^d}^c)\\
		&\geq c\mathbb{P}(\mathcal{Q}(A_1) = \mathcal{Q}(A_2) = 1,\mathcal{Q}(W_{R^d} \setminus(A_1\cup A_2))=0 ) >0.
		\end{align*}
		Positivity of \eqref{eq:gibbs_lower_bound} now follows
		because $\mathbb{P}(\PP \in E')\geq
		\mathbb{P}(\mathcal{Q}\in E')$ since $\PP$ can be given
		as a thinning of a Poisson process $\mathcal{Q}$, see
		Appendix \ref{app:thinning}.		 
\end{proof}


\section{Proof of Theorem \ref{mainthm}}\label{s:tightsec}

The proof of Theorem \ref{mainthm} is based on Lemma~\ref{c4bound} below, which provides
a bound on the fourth cumulant $c^4$. To state 
Lemma~\ref{c4bound}, let $r\in [r_0,R] $ and
 $I=(r_1,r_2] \subseteq [r_0,R]$ and introduce the notations
\begin{align*}
\bar{K}_{e,n}(r) {}&= \hat{K}_{e,n}(r)-\E \hat{K}_{e,n}(r)\\
\bar{K}_{e,n}(I) {}&= \bar{K}_{e,n}(r_2) -\bar{K}_{e,n}(r_1).
\end{align*}
Two intervals are called neighboring if they share exactly one
end point. 

\begin{lem}\label{c4bound} 
	Let $\PP$ be a point process having
	fast decay of correlations and satisfying Condition {\bf (M)}
	or a Gibbs point process of class $\mathbf{\Psi}^*$. Let
	$e=e_{i,n}$, $i\in \{1,\ldots,5\}$, be one of the edge
	corrections listed in Section \ref{sec:K} and let
	$I_{1},I_{2}\subseteq [r_0,R]$ be two neighboring intervals.
	Then there is a constant $C>0$ such that
	\begin{equation*}
	c^4(n\bar{K}_{e,n}(I_1),n\bar{K}_{e,n}(I_1),n\bar{K}_{e,n}(I_2),n\bar{K}_{e,n}(I_2)) \leq C n |I_1|^{3/4} |I_2|^{3/4}.
	\end{equation*}
\end{lem}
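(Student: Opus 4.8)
The plan is to realize each centered increment as a sum over pairs and to bound the joint fourth cumulant through a cluster (truncated-correlation) expansion, using fast decay of correlations to control the resulting integrals. Since a Gibbs process of class $\mathbf{\Psi}^*$ also has fast decay of correlations and finite Palm moments (being a thinning of a Poisson process), both hypotheses reduce to \emph{fast decay plus Condition~{\bf (M)}}, and I would treat them simultaneously.

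Following \eqref{generalK}, I would first write
$$n\bar K_{e,n}(I) = \frac{1}{\rho^2}\Big(\sum_{(x,y)\in(\PP_n)_2^{\neq}} g_I(x,y) - \E[\cdots]\Big), \qquad g_I(x,y) = \mathds{1}_{\{x\in A_I(y)\}}\,e_n(x,y),$$
where $A_I(y)=\{z: r_1<|z-y|\le r_2\}$ is the annulus of volume $|A_I|\le C|I|$, and $e_n$ is bounded for $n>(2R)^d$. As the fourth cumulant is invariant under re-centering, I may work with the uncentered pair-sums. I would then use multilinearity of cumulants together with a cluster expansion that writes the joint cumulant as a finite sum of integrals over configurations of at most eight points (one pair furnished by each of the four factors), weighted by the truncated (connected) correlation densities of $\PP$ and summed over the partitions recording coincidences among the eight point-slots. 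Each integrand carries exactly four annulus indicators, two of type $I_1$ and two of type $I_2$, and only connected diagrams survive.

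The two analytic inputs are: (i) fast decay of correlations (Definition~\ref{def:expdecay}) bounds each truncated density and, crucially, forces it to decay faster than any polynomial in the diameter of the configuration, via the estimate $\phi(t)\le C(t^{-(d+1)}\wedge 1)$ used in the proof of Lemma~\ref{varbound_all}, so that every inter-cluster integration converges; (ii) Condition~{\bf (M)} (and Lemma~\ref{A2}) guarantees finiteness of the diagonal densities appearing when point-slots coincide. With these, I would estimate each cluster integral by fixing one point via stationarity (yielding the single volume factor $|W_n|=n$), localizing the remaining relative positions with the annulus indicators to sets of volume $\le C|I_j|$, and integrating out the separations at cost $O(1)$ using the decay. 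The fully non-degenerate diagram, with all eight points distinct and separated, already gives the stronger bound $n|I_1|^2|I_2|^2$.

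The genuine difficulty, and the reason for the exponent $3/4$ rather than $1$, lies in the degenerate diagrams in which point-slots coincide so that several annulus constraints collapse onto the same relative vector, removing localizing integrations; the extreme case retains only one annulus of each type. Controlling these diagonal contributions uniformly — absorbing the coincidences through the moment bound {\bf (M)} and splitting the surviving annulus factors through a Hölder/Cauchy–Schwarz interpolation — is the main obstacle, and it is this interpolation step that converts a clean $|I_j|$ into the weaker $|I_j|^{3/4}$; since $[r_0,R]$ is bounded, a loss of $|I_j|^{1/4}$ costs only a constant, leaving $Cn|I_1|^{3/4}|I_2|^{3/4}$. This exponent is all that is needed downstream: combined with $n\Var(\hat K_{e,n}(I))\le C|I|$ from Lemma~\ref{varbound_all}, it gives $\E[(\sqrt n\,\bar K_{e,n}(I_1))^2(\sqrt n\,\bar K_{e,n}(I_2))^2]\le C|I_1\cup I_2|^{3/2}$ for neighboring intervals, feeding the tightness criterion in the proof of Theorem~\ref{mainthm}.
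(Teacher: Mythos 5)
Your overall architecture (reduce everything to a connected/cluster expansion of the fourth cumulant over configurations of at most eight points, use stationarity for the single factor of $n$, use the annulus indicators for the $|I_j|$ factors, use decay of correlations for convergence of the separations) is the right general shape, but the proposal has a genuine gap at exactly the point where the exponent $3/4$ is produced, and the mechanism you offer for it is not the one that works.

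First, your expansion implicitly assumes that the hypotheses give decay of the full truncated (Ursell-type) correlation densities in the diameter of the configuration, strong enough that ``every inter-cluster integration converges'' \emph{while the annulus localizations are retained}. Definition~\ref{def:expdecay} only provides a two-block factorization estimate, $|\rho^{(p+q)}(\xx,\xx')-\rho^{(p)}(\xx)\rho^{(q)}(\xx')|\le C\phi(c\,\dist(\xx,\xx'))$; this is why the paper works with the semi-cluster decomposition \eqref{eq:semi cluster decomposition 1}--\eqref{eq:semi cluster decomposition 2} one split $(S,T)$ at a time, on the region $\sigma(S,T)$ where the maximal gap separates $S$ from $T$. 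Crucially, the resulting bound on the connected part, $\omega(D(\bxx_{S'\cup T'}))$ from Lemma~\ref{lem:fast_xi}(ii), carries \emph{no} factors of $|I_1|$ or $|I_2|$: you cannot exploit the cancellation and the smallness of the annuli in the same region of integration. If you take the cutoff between ``near-diagonal'' and ``separated'' to be a constant, the separated region contributes $O(n)$ with no interval dependence, which is useless for tightness.

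Second, and consequently, the exponent $3/4$ does not come from degenerate diagrams or from a H\"older interpolation that ``converts $|I_j|$ into $|I_j|^{3/4}$'' --- that conversion is a free weakening (for $|I_j|\le 1$ one has $|I_j|\le|I_j|^{3/4}$) and by your own accounting even the most degenerate diagram retains one annulus of each type, i.e.\ a factor $|I_1||I_2|$, which would give exponent $1$. The actual source is a trade-off over an interval-dependent spatial cutoff: the paper sets $K=(|I_1||I_2|)^{-1/(12d)}$, bounds the contribution of configurations with maximal separation at most $K$ by moment measures via Lemma~\ref{ximomentbound}, yielding $CnK^{3d}|I_1||I_2|=Cn(|I_1||I_2|)^{3/4}$ (the $K^{3d}$ is the volume the three remaining points may occupy), and bounds the contribution of separation larger than $K$ by $\omega(K)\le c'K^{-12d}$ times volume factors, which is again $O\bigl(n(|I_1||I_2|)^{3/4}\bigr)$. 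Without this balancing step your argument does not produce any bound that degenerates correctly as $|I_1|,|I_2|\to 0$, which is the whole content of the lemma. (Your reduction of the Gibbs case to ``fast decay plus {\bf (M)}'' is also not free: the paper handles Gibbs processes separately in Lemma~\ref{lem:fast_xi} via the ancestor-clan construction, but this is a minor point compared with the above.)
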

The proof is given in Section \ref{sec:c4 lem proof}.  We first
show how Lemma~\ref{c4bound} implies
Theorem~\ref{mainthm}.

\begin{proof}[Proof of Theorem \ref{mainthm}]
	Let $k_r$ denote the limit of $\sqrt{n}\bar{K}_{n,e}(r)$ and
	let $I=(r_1,r_2]$. Since by Theorem~\ref{thmfindim},
	$\sqrt{n}\bar{K}_{e,n}(I)$ converges in distribution to
	$(k_{r_2}-k_{r_1})$ as $n$ tends to infinity,  
	the Portmanteau theorem and Lemma
	\ref{varbound_all} yield
	\begin{equation}\label{eq:limProb}
	\Var(k_{r_2}-k_{r_1}) =\E [(k_{r_2}-k_{r_1})^2] \leq n\Var \bar{K}_{e,n}(I) \leq C|I|.
	\end{equation}
	
	According to \cite[Lem. 3]{heinrichschmidt}, convergence in
	Skorokhod topology is ensured if we can show three
	properties. The first is convergence
	of finite-dimensional distributions, which follows from
	Theorem \ref{thmfindim}. The second
	property we need to check is that
	\begin{equation*}
	\lim_{r\to R} P( |k_R-k_r| \geq \eps ) =0
	\end{equation*}
	for any $\eps> 0$. This follows from \eqref{eq:limProb} and
	the Chebyshev inequality. Finally, we need to show that there
	is an $\eps >0 $ and a constant $C>0$ such that for all
	neighboring intervals $I_1$ and $I_2$,
	\begin{equation*}
	\E(n^2\bar{K}_{e,n}(I_1)^2\bar{K}_{e,n}(I_2)^2) \leq C|I_1|^{1/2 + \eps}|I_2|^{1/2 + \eps}.
	\end{equation*}
	This follows by applying Lemma \ref{varbound_all} and
	Lemma~\ref{c4bound} to the following formula  with $X=\sqrt{n}\bar{K}_{e,n}(I_1)$
	and $Y=\sqrt{n}\bar{K}_{e,n}(I_2)$: 
	\begin{align*}
	\E(X^2Y^2){}& = c^4(X,X,Y,Y) + \Var(X)\Var(Y) + 2\Cov(X,Y)^2\\ 
	&\leq c^4(X,X,Y,Y) + 3\Var(X)\Var(Y).
	\end{align*}
	
	H\"{o}lder continuity of  $(k_r)_{r\in [r_0,R]}$ follows from
	the Kolmogorov continuity theorem \citep{kallenberg}. Indeed,
	we know from Theorem \ref{thmfindim} that for any  $r_0\leq
	r_1 < r_2 \leq R$, $(k_{r_2}-k_{r_1})$ is gaussian with mean
	0. It follows from \eqref{eq:limProb} and Jensen's
	inequality that for any integer $m\geq 1$, 
	\begin{equation*}
	\E [(k_{r_2}-k_{r_1})^{2m} ]\leq C_k|r_2-r_1|^{m}.
	\end{equation*}
	Hence $(k_r)_{r\in [r_0,R]}$ has a H\"{o}lder continuous modification for any exponent $\gamma < \frac{1}{2}-\frac{1}{2m}$.
\end{proof}

It remains to show Lemma \ref{c4bound}. 
The proof is based on
decompositions of cumulant measures into mixed-moments and semi-clusters. The necessary background is given in Section \ref{s:prel-moment-decomp}. Two preliminary lemmas are shown in Section \ref{sec:xi_weighted_bound} and  \ref{sec:xi_weighted}, respectively, before the proof is presented in Section \ref{sec:c4 lem proof}. 


\subsection{Background on decomposition of cumulant measures}
\label{s:prel-moment-decomp}

 In this section, we recall the 
necessary background on decomposition of cumulant measures
into mixed-moments and semi-clusters. We refer the reader to~\cite{raic3} for a detailed presentation.

\subsubsection*{Moment and cumulant measures}
Let $\PP$ be a simple point process on $\R^{d}$. We
define a marked version $\breve{\PP} = \PP \times \{1, 2\}$ on
$\bR^d = \R^d \times \{1, 2\}$ and let $\bPPn = \PPn \times
\{1,2\}$.   For any extended score function
$\breve{\xi}:\bR^d\times \mathcal{N} \to \R^d$, we define the
random measure on $\bR^d$
\begin{equation}\label{mun}
\mu_{n} = \sum_{\bx \in \bPPn} \breve{\xi}(\bx, \PPn) \delta_{\bx}.
\end{equation}
Then, following~\cite[Section 3.1]{raic3}, the \emph{$k$-th moment measure}
$M^{k}(\mu_{n})=M_n^k$ is defined as 
\begin{align*}
\int_{(\bR^d)^k} \mathbf{f}(\bxx) M^{k}_n (\mathrm{d} \bxx)
{}&=\E\Big[
\Big(\int_{\bR^d } f_1\d\mu_n  \Big)
\dotsm
\Big(\int_{\bR^d } f_k\d\mu_n  \Big)
\Big]\\
&=
\E\Big[
\Big(\sum_{\bx \in \bPPn} \breve{\xi}(\bx, \PPn) f_{1}(\bx)\Big)
\dotsm
\Big(\sum_{\bx \in \bPPn} \breve{\xi}(\bx, \PPn) f_{k}(\bx)  \Big)
\Big],
\end{align*}
for any non-negative measurable function $\mathbf{f}=f_{1}
\otimes \ldots \otimes f_{k}$, where each $f_{i}$ is defined on
$\bR^{d}$.  Similarly, the
\emph{$k$-th cumulant measure} $c^k(\mu_n)=c_n^k$ is given by
\begin{align} \label{cummesdef}
\int_{(\bR^d)^k} \mathbf{f}(\bxx) c^{k}_n (\mathrm{d} \bxx)
{}&=
c^k\Big(
\int_{\bR^d } f_1\d\mu_n ,
\ldots,
\int_{\bR^d } f_k\d\mu_n 
\Big)\\ \nonumber
&=
c^k\Big(
\sum_{\bx \in \bPPn} \breve{\xi}(\bx, \PPn) f_{1}(\bx),
\ldots,
\sum_{\bx \in \bPPn} \breve{\xi}(\bx, \PPn) f_{k}(\bx)  
\Big).
\end{align}
We have the following expression for $c_n^k$ in terms of moment measures
\begin{equation}\label{eq:cumulant xi n}
c_{n}^{k}
=
\sum_{\{T_1, \dots, T_p\}
	\preceq \{1,\ldots,k\}}(-1)^{p - 1} (p - 1)!\, M^{T_1}_n \cdots M^{T_p}_n, 
\end{equation}
where $\{T_1, \dots, T_p\} \preceq \{1, \dots, k\}$ denotes the set of all partitions of $ \{1, \dots, k\}$ into $p$ non-empty sets $T_1,\ldots,T_p$, $p=1,\ldots,k$, and $M_n^T$ is defined as $M_n^{|T|}$ on the coordinates in $T$.

\subsubsection*{Decomposition of moment measures}

Let $T\subseteq \{1,\ldots,k\}$. For $\bxx \in (\bR^{d})^k$, we let $\bxx_{T}$ be the
projection of $\bxx$ onto the coordinates in $T$, and we denote by $\xx$ and $\xx_T$ the unmarked points corresponding to $\bxx$ and $\bxx_T$, respectively. 
 We define the diagonal of $(\bR^{d})^k$ to be 
  $\Delta = \{ \bxx \in (\bR^{d})^k : \xx = (x,\ldots,x) \}$, and $\Delta^T\subseteq (\bR^{d})^T $ is defined similarly. More generally, for a partition $T_1,\ldots,T_p \preceq \{1, \dots, k\} $, we define $\Delta^{T_1,\ldots,T_p} = \Delta^{T_1}\times \dotsm \times \Delta^{T_p}$. Finally, we let $\pi: \Delta^{T_1,\ldots,T_p} \to \R^p$ be the map that ignores marks and repeated points.

For a given partition $T_{1}, \ldots, T_{p}\preceq \{1,\ldots,k\}$, 
the \emph{mixed $\xi$-moments} of $\mu_{n} $ are defined for $\bxx \in \Delta^{T_1,\ldots,T_p}$ by
\begin{equation}\label{eq:xi-mixed}
m^{T_1, \dots, T_p}_n(\bxx)
=
\E_{\pi(\bxx)}\big[\breve{\xi}(\bx_{1}, \PP_n) \cdots \breve{\xi}(\bx_{k}, \PP_n)\big] \rho^{(p)}(\pi(\bxx)).
\end{equation}
Then we obtain the following expression for $M^{k}_n$,
\begin{equation}\label{eq:density mixed moment}
\d M^{k}_n
=
\sum_{T_1, \ldots, T_{p} \preceq \{1,\ldots,k\} }
m_n^{T_1, \dots, T_{p}}
\bar{\d} \bxx_{T_1} \cdots \bar{\d} \bxx_{T_{p}},
\end{equation}
where the \emph{singular differentials} $\bar{\d} \bxx_{T}$ are given for any non-negative measurable function $f:(\bR^d)^{|T|} \to \R$ by
\begin{equation*}
\int_{(\bR^d)^{|T|}} f(\bxx_{T}) \bar{\d} \bxx_{T} = \sum_{\tau_1,\ldots,\tau_{|T|}\in \{1,2\}}\int_{\R^d } f((x,\tau_1),\ldots,(x,\tau_{|T|}) ){\d} x.
\end{equation*}

\subsubsection*{Semi-cluster decomposition}
The cumulant measures further decompose into the so-called semi-cluster
measures. These are defined for any disjoint non-empty sets $S,T \subset \{1,\ldots k\}$, and
$A \subset (\bR^{d})^{S}$, $B \subset (\bR^{d})^{T}$  by
\begin{equation*}
U^{S,T}_n(A\times B) = M^{S \cup T}_n(A \times B) - M^{S}_n(A)M^{T}_n(B).
\end{equation*}

Let $S,T$ be a fixed non-trivial partition of
$\{1,\ldots,k\}$. Then, as in~\cite[Lemma~3.2]{raic3}, we have the decomposition
\begin{equation}\label{eq:semi cluster decomposition 1}
c_{n}^{k}
=
\sum_{S'\cup T',T_{1},\ldots,T_{p} \preceq \{1,\ldots k\}}
a_{S', T',T_{1},\ldots,T_{p}}
U^{S',T'}_n M^{T_{1}}_n \dotsm M^{T_{p}}_n,
\end{equation}
where $ a_{S',T',T_{1},\ldots,T_{p}} \in \R$ and the sum runs over all
partitions of $\{1,\ldots,k\}$, such that $S'$ and $T'$ are non-empty
subsets of $S$ and $T$, respectively. 

\subsubsection*{Decomposition with respect to the diagonal}

The maximal separation distance of $\bxx \in (\bR^d)^k$ into two disjoint subsets is defined as 
\begin{equation*}
D(\bxx) = \max_{\{T_1, T_2\} \preceq \{1,\ldots,k\}}
\mathsf{dist}(\xx_{T_2}, \xx_{T_2}). 
\end{equation*}
 For $S,T$ a non-trivial partition of $\{1,\ldots,k\}$, we let
\begin{equation*}
\sigma(S, T)
=
\big\{\bxx = (\bxx_S, \bxx_T) \in \bWn^k:\, D(\bxx) = \mathsf{dist}(\bxx_S, \bxx_T)\big\} \setminus \Delta.
\end{equation*}
Let $S'\subseteq S$ and $T'\subseteq T$. On the set $\sigma(S,T)$
we can use~\eqref{eq:density mixed
moment} to decompose $\d U^{S',T'}$ as
\begin{align}\label{eq:semi cluster decomposition 2}
\d U^{S',T'} &=
\sum_{\{S_1', \dots, S_{q}'\} \preceq S' }  \sum_{\{T_1', \dots, T_{r}'\} \preceq T' } 
\Big(m_n^{(S'_1, \dots, S_{q}',T'_1, \dots, T_{r}')} (\bxx_{S'\cup T'}) \notag \\
&\quad
- m_n^{(S'_1, \dots, S_{q}')}(\bxx_{S'}) m_n^{(T'_1, \dots, T_{r}')}(\bxx_{T'}) \Big) 
\bar{\mathbf{d}} \bxx_{S'\cup T'}, 
\end{align}
where  we have written
$\bar{\mathbf{d}} \bxx_{S'\cup T'} = \bar{\d} \bxx_{S_1'} \cdots \bar{\d} \bxx_{S_{q}'}
\bar{\d} \bxx_{T_1'} \cdots \bar{\d} \bxx_{T_{r}'}$. Note here, that there are terms in the definition \eqref{eq:density mixed moment} of $dM_n^{S'\cup T'}$ corresponding to partitions that do not split into a partition of $S'$ and a partition of $T'$, but these vanish on $\sigma(S',T')$.

As in~\cite[(3.28)]{raic3}, we have the following decomposition for any non-negative measurable function
$\mathbf{f}:(\bR^d)^k \to \R$
\begin{equation} \label{eq:cumulant decomposition}
\int_{(\bR^d)^k} \mathbf{f}(\bxx) dc_{n}^{k}(\bxx)
=
\int_{\Delta}\mathbf{f}(\bxx) \mathrm{d} c_{n}^{k}(\bxx)
+
\sum_{S,T \preceq \{1,\ldots,k\}}
\int_{\sigma(S, T)}  \mathbf{f}(\bxx) \mathrm dc_{n}^{k}(\bxx).
\end{equation}



\subsection{Bounds on mixed $\xi$-moments}\label{sec:xi_weighted_bound}

Let $I_1,I_2 \subseteq [r_0,R]$ be intervals and define score functions 
\begin{equation*}
\xi_{e,n,I_{i}}(x,\X) =  \frac{1}{\rho^{2}} \sum_{y\in \X } \mathds{1}_{\{|x-y|\in I_{i}\}} e_{n}(x,y). 
\end{equation*}
Define the extended score function $\breve{\xi}_{e,n}:\bR^d\times \mathcal{N} \to \R^d$ by 
\begin{equation}\label{augmentedxi}
\breve{\xi}_{e,n}((x, \tau), \X)=
\begin{cases}
\xi_{e,n,I_{1}}(x, \X)&\text{ if $\tau = 1$, }	\\
\xi_{e,n,I_{2}}(x, \X)&\text{ if $\tau = 2$}	
\end{cases}
\end{equation}
and let the associated random  measure $\mu_n$ be as in
\eqref{mun}. In this section, we {establish}
a bound on the mixed $\xi$-moments \eqref{eq:xi-mixed} that is
used in the proof of Theorem \ref{c4bound}. Recall the map $\pi :
\Delta^{T_1,\ldots,T_p} \to \R^p$ that ignores repeated points
and let
\begin{equation*}
W_{n}^{(1,2)} = (W_{n} \times \{1\})^2 \times (W_{n} \times \{2\})^2\subseteq (\bR^d)^4.
\end{equation*}
Similarly, for $T\subseteq\{1,2,3,4\}$, $(W_{n}^{(1,2)})^T$ denotes the coordinates in $T$ of points in $W_n^{(1,2)}$.

\begin{lem}\label{ximomentbound} 
	Let $\PP$ be a stationary point
	process having bounded correlation functions. Let
	$T_1,\ldots,T_p$ be a partition of $T\subseteq
	\{1,\ldots,4\}$ and let $ K_0 >0$.
	Then there is a constant $C>0$ such that for all
	$K\geq K_0$, all disjoint intervals $I_1,I_2 \subseteq
	[r_0,R]$, and all $x_0\in \R^d$, we have 
	\begin{align}\label{intWn}
	&\int_{\Delta^{T_1,\ldots,T_p}\cap \pi^{-1}(W_n\times B_K(x_1)^{p-1})\cap (W_n^{(1,2)})^T} 	m_n^{T_1,\ldots,T_p}(\bxx) \bar{\d}\bxx_{T_1} \ldots \bar{\d}\bxx_{T_p} \\ \nonumber
	& \qquad \qquad \qquad \qquad \qquad \qquad \leq C n K^{d(p-1)}  |I_1|^A|I_2|^B\\
	\label{intBK}
	&\int_{\Delta^{T_1,\ldots,T_p}\cap \pi^{-1}( B_K(x_0)^{p})\cap (W_n^{(1,2)})^T}	m_n^{T_1,\ldots,T_p}(\bxx) \bar{\d} \bxx_{T_1} \ldots \bar{\d} \bxx_{T_p}\\ \nonumber
	& \qquad \qquad \qquad \qquad \qquad \qquad \leq C K^{dp}  |I_1|^A|I_2|^B,
	\end{align}
	where the mixed $\xi$-moments are defined via the score function $\breve{\xi}_{e,n}$ in \eqref{augmentedxi}, and  $A=\mathds{1}_{T\cap\{1,2\}\neq \emptyset}$,  $B=\mathds{1}_{T\cap\{3,4\}\neq \emptyset}$.
\end{lem}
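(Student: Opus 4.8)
The plan is to bound the integrand $m_n^{T_1,\ldots,T_p}$ pointwise and then carry out the integration as pure volume bookkeeping, the only source of smallness being the annular support of the indicators $\mathds{1}_{\{|x-y|\in I_i\}}$. The edge corrections $e_n$ are uniformly bounded on the relevant domain (for $n>(2R)^d$, as in the proof of Lemma~\ref{varbound_all}), so they contribute only a multiplicative constant and may be dropped. First I would expand each factor $\breve{\xi}_{e,n}(\bx_i,\PP_n)=\rho^{-2}\sum_{y\in\PP_n}\mathds{1}_{\{|x^{(j(i))}-y|\in I_{\tau_i}\}}e_n(\cdot)$, so that the product of the $k=|T|$ scores inside the Palm expectation $\E_{\pi(\bxx)}$ becomes a $k$-fold sum over $\PP_n$. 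Writing this sum in factorial form and expanding the Palm expectation into factorial moment measures (as in \cite{raic3}), one obtains finitely many terms indexed by the coincidence pattern of the $y_i$: which $y_i$ are forced equal to one another, and which collapse onto one of the $p$ Palm points $\pi(\bxx)$. Each term is an integral of a product of annulus indicators against a factorial moment density $\rho^{(q)}$ of order $q\le p+k$, bounded by hypothesis. Self-coincidences $y_i=x^{(j(i))}$ contribute nothing, since $r_0>0$ forces $0\notin I_{\tau_i}$.

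Having reduced to a sum of indicator integrals with bounded densities, the integration variables are the $p$ block locations $\pi(\bxx)=(x_1,\ldots,x_p)$ together with the new points created by free $y_i$ coinciding with no earlier variable. On the domain of \eqref{intWn}, $x_1$ ranges over $W_n$ and $x_2,\ldots,x_p$ over $B_K(x_1)$, giving the base factor $nK^{d(p-1)}$; on the domain of \eqref{intBK} all blocks lie in $B_K(x_0)$ and the base factor is $K^{dp}$. I would then absorb the indicators as follows: a free $y_i$ is confined to the annulus $A_{I_{\tau_i}}(x^{(j(i))})$, whose volume is $\le C|I_{\tau_i}|$; a $y_i$ collapsing onto a Palm point $x^{(j')}\ne x^{(j(i))}$ turns its indicator into the constraint $|x^{(j(i))}-x^{(j')}|\in I_{\tau_i}$, which confines one block location to an annulus about another and thus merely replaces a spatial volume factor by $C|I_{\tau_i}|$. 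Crucially, the distinguished variable $x_1$ (resp.\ all blocks, in \eqref{intBK}) is always integrated freely, so no coincidence ever loses the $n$ (resp.\ inflates the base $K^{dp}$): each coincidence only trades a $K^d$ for an $|I|\le R-r_0\le K^d$.

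The remaining task is power counting. Whenever $T\cap\{1,2\}\ne\emptyset$, i.e.\ $A=1$, there is at least one mark-$1$ coordinate and hence at least one $I_1$-annular constraint, producing at least one factor $|I_1|$ — even if all mark-$1$ variables collapse to a single point, that point lies in one $I_1$-annulus. The same holds for $|I_2|$ when $B=1$, while $A=0$ (resp.\ $B=0$) means no $I_1$-constraint (resp.\ $I_2$-constraint) arises at all. This yields a bound $C\,nK^{d(p-1)}|I_1|^{m_1}|I_2|^{m_2}$ (resp.\ $CK^{dp}|I_1|^{m_1}|I_2|^{m_2}$) with $m_1\ge A$, $m_2\ge B$ and $m_1=0\iff A=0$, $m_2=0\iff B=0$. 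Since $|I_i|\le R-r_0$ is bounded, I would discard the surplus powers via $|I_i|^{m}\le(R-r_0)^{m-1}|I_i|$, collapsing the exponents to the indicators $A,B$. Summing over the finitely many coincidence patterns and subpartitions (bounded in number since $k\le 4$) gives the stated constant $C$, uniform in $n$, $K\ge K_0$, $I_1,I_2$ and $x_0$.

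The hard part will be the simultaneous verification, over all degenerate coincidence terms, that collapsing integration variables never inflates the base spatial volume (the worry being a $y$–Palm coincidence that could appear to free an extra $W_n$-integration) while at least one genuine $|I_1|$-factor (resp.\ $|I_2|$) always survives when $A=1$ (resp.\ $B=1$). Both follow from the bookkeeping above once it is checked uniformly across the coincidence patterns, but this is the place where the argument must be written out in full rather than sketched.
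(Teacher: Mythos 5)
Your proposal follows essentially the same route as the paper's proof: drop the bounded edge corrections, expand the Palm product via the Campbell formula into coincidence patterns of the $y$-variables, bound the factorial moment densities by a constant, and reduce to volume bookkeeping with annuli, extracting $|I_1|^A|I_2|^B$ and discarding surplus powers. The one step you flag as needing care --- that a variable carrying both an $I_1$- and an $I_2$-constraint still yields both factors --- is resolved in the paper exactly as you anticipate: since $I_1$ and $I_2$ are disjoint, any nonvanishing product of indicators for both intervals must involve at least three distinct points, so the constrained variables can be integrated out one after the other, each trading a spatial volume for a factor $C|I_i|$.
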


\begin{proof}
	It is enough to show the theorem in the case without edge corrections since the edge correction factors are bounded and hence there is a $C>0$ such that $\xi_{e,n,I_i}\leq C\xi_{e_{1,n},n,I_i}$, $i=1,2$.
	On $(W_n^{(1,2)})^T\cap\Delta^{T_1,\ldots,T_p}$, $m_n^{T_1,\ldots,T_p}$ takes the form
	\begin{equation*}
	m_n^{T_1,\ldots,T_p}(\bxx) = \E_{\xx} \Big[\xi_{I_1}^{l_1}(x_1,\PP_n)\xi_{I_2}^{k_1}(x_1,\PP_n)  \dotsm \xi_{I_1}^{l_p}(x_p,\PP_n)\xi_{I_2}^{k_p}(x_p,\PP_n)\Big]\rho^{(p)} (\xx),
	\end{equation*}
	where $l_i=|T_i\cap\{1,2\}|$ and $k_i=|T_i\cap\{3,4\}|$. Let $V=W_n$. Then, by stationarity, the integral in \eqref{intWn} is bounded by
	\begin{align*}
	{}&\int_{V\times B_K(x_1)^{p-1}} \E_{\xx} \Big[\xi_{I_1}^{l_1}(x_1,\PP)\xi_{I_2}^{k_1}(x_1,\PP) \dotsm \xi_{I_1}^{l_p}(x_{p},\PP)\xi_{I_2}^{k_p}(x_{p},\PP)\Big]\rho^{(p)} (\xx)\d \xx\\
	{}&\leq |V|\int_{ B_K(o)^{p-1}} \E_{o,\mathbf{z}} \Big[\xi_{I_1}^{l_1}(o,\PP)\xi_{I_2}^{k_1}(o,\PP) \dotsm \xi_{I_1}^{l_p}(z_{p-1},\PP)\xi_{I_2}^{k_p}(z_{p-1},\PP)\Big]\rho^{(p)} (o,\mathbf{z})\d \mathbf{z}\\
	&\leq	|V||B_K(o)|^{-1}\int_{B_{2K}(o)^{p}} \E_{\xx} \Big[\xi_{I_1}^{l_1}(x_1,\PP)\xi_{I_2}^{k_1}(x_1,\PP)  \dotsm \xi_{I_1}^{l_p}(x_{p},\PP)\xi_{I_2}^{k_p}(x_{p},\PP)\Big]\rho^{(p)} (\xx)\d \xx.
	\end{align*}
	The same bound obviously holds for \eqref{intBK} when $V=B_K(x_0)$.
	Using the Campbell formula and the definition of $\xi_{I_i}$, we rewrite the integral in the bound as
	\begin{align*}
	&\E\bigg[ \sum_{\xx\in (\PP \cap  B_{2K}(o))_p^{\neq}} \bigg(\sum_{y_1^1\in \PP} \mathds{1}_{|x_1-y_1^1|\in I_1}\bigg)^{l_1} \bigg(\sum_{y_1^2\in \PP} \mathds{1}_{|x_1-y_1^2|\in I_2}\bigg)^{k_1}\\
	&\quad \dotsm \bigg(\sum_{y_p^1\in \PP} \mathds{1}_{|x_p-y_p^1|\in I_1}\bigg)^{l_p} \bigg(\sum_{y_p^2\in \PP} \mathds{1}_{|x_p-y_p^2|\in I_2}\bigg)^{k_p}\bigg].
	\end{align*}
	Multiplying out the sums over $y_i^j$, we obtain a sum over $\boldsymbol{y}\in \PP^{|T|}$, where the points in $\boldsymbol{y}$ are not necessarily all different and could equal some of the points in $\xx$. We now apply the Campbell formula backwards. To do so, we must write the above sum as a linear combination of sums  over different points $x_1,\ldots,x_p,y_1^j,\ldots,y_q^j$. 
	
	To illustrate how each such term is bounded, we consider the case $p=3$, $l_1=l_2=l_3=k_1=1$ and $y_1^1=x_2$, $y_2^1=x_3$ og $y_1^2=x_3$ while $y_3^1$ is different from the points in $\xx$. This yields
	\begin{align*}
	&|B_K(o)|^{-1}\E\bigg[ \sum_{\xx\in (\PP\cap B_{2K}(o))^{\neq}_3} \sum_{y \in \PP} \mathds{1}_{|x_1-x_2|\in I_1} \mathds{1}_{|x_2-x_3|\in I_1} \mathds{1}_{|x_3-y|\in I_1}  \mathds{1}_{|x_1-x_3|\in I_2}\bigg]\\
	&= |B_K(o)|^{-1}\int_{ B_{2K}(o)^3}\int_{\R }\rho^{(4)}(\xx,y) \mathds{1}_{|x_1-x_2|\in I_1} \mathds{1}_{|x_2-x_3|\in I_1} \mathds{1}_{|x_3-y|\in I_1}  \mathds{1}_{|x_2-x_3|\in I_2} \d y \d \xx\\
	&\leq C_1|I_1| |B_K(o)|^{-1}\int_{ B_{2K}(o)^3}\mathds{1}_{|x_1-x_2|\in I_1} \mathds{1}_{|x_2-x_3|\in I_1}  \mathds{1}_{|x_2-x_3|\in I_2}  \d x_1 \d x_2 \ dx_3\\
	&\leq C_1|I_1| |B_K(o)|^{-1}\int_{ B_{2K}(o)^3}  \mathds{1}_{|x_2-x_3|\in I_2}  \d \xx\\
	&\leq C_2|B_{2K}(o)|^2|B_K(o)|^{-1}|I_1||I_2|,
	\end{align*} 
	where we used that $\rho^{(4)}$ is bounded in the first inequality.
	
	It is a straightforward check that all other integrals can be
	bounded similarly. Indeed, the $y_i^j$'s that are free can
	always be integrated out. If the obtained bound involves the
	necessary factors of $|I_1|$ or $|I_2|$ we bound the
	remaining integral by $|B_{2K}(o)^p|$. Otherwise, it is
	convenient to bound one or more of the indicator functions in
	the integral by 1, leaving at most one involving $I_1$ and
	one involving $I_2$. As $I_1$ and $I_2$ are disjoint, a
	product of indicator functions for these two sets must
	involve at least three different points to be non-zero. Thus,
	these can be integrated out one after the other to obtain the
	necessary bound.  
\end{proof}

\subsection{Fast decay of mixed $\xi$-moments}\label{sec:xi_weighted}

In this section, we show an analogue of the fast decay of
correlations for a slightly more general version of the
$\xi$-weighted measures.
Throughout the section, we use the notation
$\xx=(x_1,\ldots,x_k)=(\xx_1,\xx_2)$, where
$\xx_1=(x_1,\ldots,x_p)$ and $\xx_2=(x_{p+1},\ldots,x_k)$. 
	
	Let $\xi_{ij}$,  $i=1,\ldots,k$, $j=1,\ldots L_i$, be score
	functions with a common deterministic radius of stabilization
	$R$ and that there is a constant $\hat{c}>0$ such that for all $i,j$, 
	\begin{equation}\label{score_bound}
	\xi_{ij}(x,\mathcal{X}\cap B_r(x)) 
	\mathds{1}_{\{|\mathcal{X}\cap B_r(x)| = N\}}\leq N\hat{c}.
	\end{equation}
	Define the associated $\xi$-weighted moments 
	\begin{equation*}
	m^{L_1,\ldots,L_k}_n(\xx ) = 
	\E_{\xx} \bigg[ \prod_{l_1=1}^{L_1} 
	\xi_{1l_1}(x_1,\mathcal{P}_n) \dotsm 
	\prod_{l_k=1}^{L_k} \xi_{kl_k}(x_k,\mathcal{P}_n) \bigg]
	\rho^{(k)}(\xx).
	\end{equation*}

\begin{lem}\label{lem:fast_xi} 
	Let $\PP$ be a point process
	having fast decay of correlations and satisfying Condition
	{\bf (M)} or a Gibbs point process of class
	$\mathbf{\Psi}^*$. Let $\xi_{ij}$ be score functions for
	$i=1,\ldots,k$, $j=1,\ldots L_i$ with a common deterministic
	radius of stabilization $R$ and all satisfying the common
	bound \eqref{score_bound}. Then there exists a constant $C>0$
	and a fast decreasing function $\omega $ depending only on
	the score functions via the constant $\hat{c}$ in
	\eqref{score_bound}, the common radius of stablization $R$,
	and the numbers $k$, $L_1,\ldots,L_k$  such that for all $n$
	and almost all $\xx =(\xx_1,\xx_2)$,
	$\xx_1=(x_1,\ldots,x_p)$, $\xx_2=(x_{p+1},\ldots,x_k)$,
	\begin{itemize}
	\item[(i)] $|m^{L_1,\ldots,L_k}_n(\xx ) | \leq C$
	\item[(ii)] $\big|m^{L_1,\ldots,L_k}_n(\xx ) - m^{L_1,\ldots,L_p}_n(\xx_1 )m^{L_{p+1},\ldots,L_k}_n(\xx_2 ) \big|\leq \omega(\dist(\xx_1,\xx_2 ))$.
	\end{itemize}
\end{lem}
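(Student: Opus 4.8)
The plan is to prove the uniform bound (i) first and then the decorrelation estimate (ii), handling the two classes of point processes as uniformly as possible and invoking the thinning representation only to streamline the Gibbs case.

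For (i), I would begin from the observation that each $\xi_{ij}(x_i,\PP_n)$ depends only on $\PP_n\cap B_R(x_i)$ and, by the growth bound \eqref{score_bound}, is dominated by $\hat c\,\PP(B_R(x_i))$. Hence the full product is bounded by $\hat c^{\,L}\prod_{i=1}^k \PP(B_R(x_i))^{L_i}$ with $L=\sum_i L_i$. Applying the generalized H\"older inequality under the Palm measure $\E_{\xx}$ with exponents $L/L_i$ reduces matters to bounding the single-ball moments $\E_{\xx}[\PP(B_R(x_i))^{L}]$ uniformly in $\xx$ and $n$. Covering $B_R(o)$ by a fixed number of unit cubes and using subadditivity, condition {\bf (M)} gives such a bound for the conditionally $m$-dependent class, and for the Gibbs class the same follows from domination by the Poisson process $\mathcal{Q}$ of intensity $\tau$. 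Since $\rho^{(k)}$ is bounded — by Definition~\ref{def:expdecay} (cf.\ the boundedness of $\rho^{(p)}$ used in Lemma~\ref{varbound_all}) in the fast-decay case and again by Poisson domination in the Gibbs case — multiplying through yields (i).

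For (ii) the first step is to localize. Writing $U_1=\bigcup_{i\le p}B_R(x_i)$ and $U_2=\bigcup_{i>p}B_R(x_i)$, the partial products $G_1=\prod_{i\le p}\prod_l\xi_{il}(x_i,\PP_n)$ and $G_2=\prod_{i>p}\prod_l\xi_{il}(x_i,\PP_n)$ depend only on $\PP_n\cap U_1$ and $\PP_n\cap U_2$, and $\dist(U_1,U_2)\ge t-2R$ where $t=\dist(\xx_1,\xx_2)$. I would then expand $G_1$ and $G_2$ in a factorial-moment (difference-operator) expansion around the empty configuration; because the two functionals are supported on disjoint windows, the Leibniz rule factorizes the joint expansion into a product of a $U_1$-expansion and a $U_2$-expansion. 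Under the appropriate Palm measures this rewrites both $m^{L_1,\ldots,L_k}_n(\xx)$ and the product $m^{L_1,\ldots,L_p}_n(\xx_1)\,m^{L_{p+1},\ldots,L_k}_n(\xx_2)$ as sums over $a,b\ge0$ of integrals over $U_1^a\times U_2^b$ of the \emph{same} kernels $(D^aG_1)(D^bG_2)$, weighted by $\rho^{(k+a+b)}(\xx,\mathbf y^{(1)},\mathbf y^{(2)})$ in the first case and by $\rho^{(p+a)}(\xx_1,\mathbf y^{(1)})\,\rho^{(k-p+b)}(\xx_2,\mathbf y^{(2)})$ in the second. Subtracting, the difference is a single sum whose integrand carries the factor $\rho^{(k+a+b)}(\xx,\mathbf y^{(1)},\mathbf y^{(2)}) - \rho^{(p+a)}(\xx_1,\mathbf y^{(1)})\,\rho^{(k-p+b)}(\xx_2,\mathbf y^{(2)})$, which Definition~\ref{def:expdecay} bounds by $C_{k+a+b}\,\phi\big(c_{k+a+b}(t-2R)\big)$, since $\xx_1$ and $\mathbf y^{(1)}$ lie in $U_1$ while $\xx_2$ and $\mathbf y^{(2)}$ lie in $U_2$.

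The main obstacle is that the constants $C_n,c_n$ of Definition~\ref{def:expdecay} may depend on the total number of points $n=k+a+b$, so the termwise bounds cannot simply be summed. To overcome this I would truncate the expansion: the growth bound \eqref{score_bound} forces the kernels $D^aG_1$ to be supported on configurations with many points in $U_1$, whose contribution is controlled by the moment bound {\bf (M)}, so the tail $a+b>N$ contributes at most a quantity $\psi(N)$ that is fast decreasing in $N$; on the retained range $a+b\le N$ the constants are bounded by some $C(N)$ depending only on $N,k,R,\hat c$. Choosing $N=N(t)$ to grow as a suitable small power of $t$ balances $\psi(N)$ against $C(N)\,\phi(c(t-2R))$ and yields a single fast decreasing $\omega$ depending only on $\hat c,R,k,L_1,\ldots,L_k$, as claimed; when the $\xi_{ij}$ are of the $U$-statistic form the expansion terminates at order $L$ and no balancing is needed. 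For the Gibbs class this balancing can be bypassed: using the backward perfect-simulation/thinning construction of Appendix~\ref{app:thinning}, $G_1$ and $G_2$ are measurable with respect to the ancestor clusters of the thinning meeting $U_1$ and $U_2$, these clusters have exponentially decaying radius, so they are disjoint and $G_1,G_2$ conditionally independent with probability at least $1-e^{-c(t-2R)}$, which together with the $L^2$-bounds from (i) gives exponential, hence fast, decay directly.
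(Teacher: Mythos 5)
Your argument follows essentially the same route as the paper. Part (i) is identical: domination of the score product by $\hat c^{L}\prod_i\PP(B_R(x_i))^{L_i}$, H\"older under the Palm measure, Condition {\bf (M)} and boundedness of $\rho^{(k)}$ in the fast-decay case, Poisson domination in the Gibbs case. For (ii) in the Gibbs case you use exactly the paper's mechanism: the ancestor-clan/thinning construction of Appendix~\ref{app:thinning}, conditional independence of $G_1$ and $G_2$ on the event that the clans stay within $s$ of $U_1$ and $U_2$, and the exponential tail \eqref{eq:gibbs_ancestor} combined with the moment bounds from (i). (One technical point you gloss over: for Gibbs processes the mixed moments are only defined almost everywhere, so the paper proves the bounds for integrals over small product sets $A=\prod_j A^j$ and concludes by Lebesgue differentiation; your pointwise phrasing should be routed through the same device.)

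The one place you genuinely diverge is (ii) for the fast-decay class: the paper simply invokes \cite[Thm.~1.11]{yogesh} and checks that its proof tolerates non-identical, non-translation-invariant score functions, whereas you unpack that proof via the factorial moment expansion. Your structural picture is right --- disjoint supports $U_1,U_2$ at distance $\ge t-2R$, Leibniz factorization of the difference operators, and a difference of the form $\rho^{(k+a+b)}(\xx,\mathbf y^{(1)},\mathbf y^{(2)})-\rho^{(p+a)}(\xx_1,\mathbf y^{(1)})\rho^{(k-p+b)}(\xx_2,\mathbf y^{(2)})$ controlled by Definition~\ref{def:expdecay}. The weak link is the tail of the expansion: your claim that the terms with $a+b>N$ contribute a quantity $\psi(N)$ that is fast decreasing in $N$ does not follow from \eqref{score_bound} and Condition {\bf (M)} as stated, since {\bf (M)} gives no control on how the moment constants $M_p$ (nor $\sup\rho^{(p)}$, nor the $C_p,c_p$ of Definition~\ref{def:expdecay}) grow with $p$, and the $N$th difference operator already carries a factor $2^{N}$. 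This summability/remainder control is precisely the nontrivial content that the paper delegates to the cited theorem, so you should either cite it as the paper does or supply the truncation estimate in detail. As you note, for the $U$-statistic score functions actually used for the $K$-function the expansion terminates at order $L$, so the issue is moot in the paper's application, but the lemma is stated for general score functions satisfying \eqref{score_bound}.
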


\begin{proof}
First we consider a point process having fast decay of
correlations and satisfying Condition {\bf (M)}. By the
H\"{o}lder inequality,
\begin{align*}
	m_n^{L_1,\ldots,L_k}(\xx ) {}&= 
	\E_{\xx } \bigg[ \prod_{l_1=1}^{L_1} \xi_{1l_1}(x_1,\PP_n) 
	\dotsm \prod_{l_k=1}^{L_k} \xi_{kl_k}(x_k,\PP_n) \bigg]
	\rho^{(k)}(\xx)\\
	&\leq \hat{c}^{L}\rho^{(k)}(\xx)
	\E_{\xx } \bigg[  \prod_{j=1}^{k} \PP(B_R(x_j))^{L_j} \bigg]\\
	&\leq   \hat{c}^{L}\rho^{(k)}(\xx)  \prod_{j=1}^{k}\E_{\xx } \bigg[  \PP(B_R(x_j))^{kL_j} \bigg]^{1/k},
\end{align*}
where $L=L_1 + \dotsm + L_k$. By Condition {\bf (M)} and
boundedness of $\rho^{(k)}(\xx)$, this is bounded, which
shows (i). 

In  \cite[Thm. 1.11]{yogesh}, (ii) was shown for the case 
\begin{equation*}
m_n^{L_1,\ldots,L_k}(\xx ) = \E_{\xx} \Big[\xi(x_1,\PP_n)^{L_1} 
  \dotsm  \xi(x_k,\PP_n)^{L_k}\Big]\rho^{(k)} (\xx),
\end{equation*}
where $\xi$ is a score function that does not depend on $n$,
satisfies \eqref{score_bound}, and is invariant under
translation, i.e.\ $\xi(x,\mathcal{X}) = \xi(x+y,\mathcal{X}
+y)$.
 
Our case is different in several ways. First of all, not all
score functions are identical. However, this is not a problem for
the proof of \cite[Thm. 1.11]{yogesh}, since the only properties
needed in the proof was a factorization \cite[(3.21)]{yogesh}
of the form
	\begin{equation*}
\prod_{j=1}^k\prod_{l_j=1}^{L_j} \xi_{jl_j}(x_j,\mathcal{X}) = \bigg(\prod_{j=1}^p\prod_{l_j=1}^{L_j} \xi_{jl_j}(x_j,\mathcal{X})\bigg)\bigg(\prod_{j=p+1}^k\prod_{l_j=1}^{L_j} \xi_{jl_j}(x_j,\mathcal{X})\bigg)
\end{equation*}
 and a common bound of the form \eqref{score_bound} for all the
 involved score functions. The proof yields a bound of the form
 (ii) where the fast decreasing function $\omega$ depends only on
 the score functions involved via the constant $\hat{c}$.
 
Moreover, the involved score function was assumed translation
invariant in \cite{yogesh}, which is not assumed here. However,
the proof of \cite[Thm. 1.11]{yogesh} does not use translation
invariance, since the factorial moment expansion \cite{fme}, on
which the proof was built, does not require translation
invariance. As long as the bound \eqref{score_bound} holds for
all $(x,\mathcal{X})$, the proof still goes through. 

Next, we consider the Gibbs case. Assume that $\PP$ is
constructed by thinning a free birth-death process $\gamma(t)$ of
intensity $\tau$ as described in Appendix \ref{app:thinning}. Let
$\mathcal{Q}$ be the Poisson process $\gamma(0)$ and
$\PP=\gamma^{\Psi}(0)\subseteq \gamma(0) = Q$. For any
$A\subseteq \R^{dk}$, 
\begin{align*}
\int_{A} m_n^{L_1,\ldots,L_k}(\xx )  \d \xx {}& = \E\bigg[\sum_{\xx \in \PP_n^{\neq}} \mathds{1}_{A}(\xx)\prod_{l_j=1}^{L_j} \xi_{jl_j}(x_j,\PP_n)  \bigg]\\
&\leq \hat{c}^{L}  \E\bigg[\sum_{\xx \in \PP_n^{\neq}} \mathds{1}_{A}(\xx ) \PP(B_{R}(x_j))^{L_j}   \bigg]\\
 &\leq\hat{c}^{L}  \E\bigg[\sum_{\xx \in \mathcal{Q}_n^{\neq}} \mathds{1}_{A}(\xx) \mathcal{Q}(B_R(x_j))^{L_j}\bigg]\\
 &\leq |A| C',
\end{align*}
where  the last inequality follows from (i) that has been proved
 for point processes satisfying Condition \textbf{(M)} and thus
 for Poisson point processes. It follows that
 $m_n^{L_1,\ldots,L_k}(\xx,\PP_n)$ must be bounded by $C'$ for
 almost all $\xx$.

To show (ii) we fix $\xx$, let $A = \prod_{j=1}^k A^j$ where $x_j\in A^j \subseteq B_1(x_j)$, and let $s=\tfrac{1}{2}\dist(\xx_1,\xx_2 ) - R-1 $. Let $A_1=\prod_{j=1}^p A^j$ and $A_2=\prod_{j=p+1}^k A^j$. For $s>0$,
\begin{align}\nonumber
\int_{A} m_n^{L_1,\ldots,L_k}(\mathbf{y})  \d \mathbf{y} {}& = \E\bigg[\sum_{\mathbf{y} \in (\PP_n)_k^{\neq}} \mathds{1}_{A}(\mathbf{y})\prod_{j=1}^k\prod_{l_j=1}^{L_j} \xi_{jl_j}(y_j,\PP_n) \bigg]\\
& = \E(f(\PP_n)g(\PP_n)), \label{campbell_omskrivning}
\end{align}
where
\begin{align*}
f(\mathcal{X}){}& = \sum_{\mathbf{y}_1 \in \mathcal{X}_p^{\neq}} \mathds{1}_{A_1}(\mathbf{y}_1)\prod_{j=1}^p \prod_{l_j=1}^{L_j} \xi_{jl_j}(y_j,\mathcal{X}) \\
g(\mathcal{X}) {}& = \sum_{\mathbf{y}_2\in \mathcal{X}_{k-p}^{\neq}} \mathds{1}_{A_2}(\mathbf{y}_2)\prod_{j=p+1}^{k}\prod_{l_j=1}^{L_j} \xi_{jl_j}(y_j,\mathcal{X}).
\end{align*}

Let $E_1$ and $E_2$ be the events that all ancestors of points in $A_1'= (\bigcup_{j=1}^p A^j)\oplus B_R(o)$  and  $A_2'= (\bigcup_{j=p+1}^k A^j)\oplus B_R(o)$ are within distance $s-r^{\Psi}$ from  $A_1'$ and $A_2'$, respectively. The events $E_i$ depend only on $\gamma(t)$ restricted to $(A_i'\oplus B_s(o)) \times \R$, which are disjoint sets. On $E_i$, $\PP\cap A_i'$ depends only on $\gamma(t) $ restricted to $(A_i'\oplus B_s(o)) \times \R$. 

Then, since $\mathds{1}_{E_i} = 1 - \mathds{1}_{E_i^c}$, we may write
\begin{align*}
&\E[f(\mathcal{P}_n)g(\PP_n)] = \E[f(\mathcal{P}_n)\mathds{1}_{E_1}] \E[ g(\PP_n)\mathds{1}_{E_2}] +  \E [f(\mathcal{P}_n )g(\PP_n)\mathds{1}_{(E_1\cap E_2)^c}] \\
& = \E[f(\mathcal{P}_n)]\E [ g(\PP_n)] - \E[f(\mathcal{P}_n)\mathds{1}_{E_1^c}]\E [ g(\PP_n)] - \E[f(\mathcal{P}_n)]\E [ g(\PP_n)\mathds{1}_{E_2^c}]\\
& +\E[f(\mathcal{P}_n)\mathds{1}_{E_1^c}]\E [ g(\PP_n)\mathds{1}_{E_2^c}] + \E [f(\mathcal{P}_n )g(\PP_n)\mathds{1}_{(E_1\cap E_2)^c}]. 
\end{align*}
The first equality follows from independence between disjoint parts of $\gamma(t)$. We need  to bound the last four terms. Since the argument is the same in all four cases, we just consider $\E[f(\mathcal{P}_n)\mathds{1}_{E_1^c}]\E[ g(\PP_n)]$. By (i), $\E [ g(\PP_n)] \leq C  |A_2| $. By the Cauchy-Schwarz inequality,
\begin{align*}
\E[f({\PP_n})\mathds{1}_{E_1^c} ]\leq \E[f({\PP_n})^2]^{1/2}\mathbb{P}(E_1^c )^{1/2} \leq C_1  |A_1|\exp(-s/C_2).
\end{align*}
The latter bound follows from \eqref{eq:gibbs_ancestor} in Appendix \ref{app:thinning} and because
\begin{align*}
 &\E[f({\PP_n})^2] \leq \hat{c}^{2L}\E  \bigg[ \bigg(\sum_{\mathbf{y}_1 \in (\mathcal{Q}_n)_p^{\neq}} \mathds{1}_{A_1}(\mathbf{y}_1) \mathcal{Q}_n(B_R(y_j))^{L_j}\bigg)^2 \bigg]\\
 &\leq \hat{c}^{2L} \E\bigg[\sum_{\{j_1,\ldots,j_l\} \subseteq \{1,\ldots,p\}} c_l \sum_{\tilde{\mathbf{y}}\in (Q_n)_{p+l}^{\neq} }  \mathds{1}_{A_1}(\mathbf{y}_1)  \prod_{i=1}^{l} \mathds{1}_{A^{j_i}}(y_{j+i}) \prod_{j=1}^{p+l}  \mathcal{Q}_n (B_R(y_j))^{2L}\bigg]  \\
 &\leq \hat{c}^{2L}  \sum_{\{j_1,\ldots,j_l\} \subseteq \{1,\ldots,p\}}c_l\tau^{p+l}\int_{ A_1 \times \prod_{i=1}^{l} A^{j_i} }\E \left[ \prod_{j=1}^{p+l}  (\mathcal{Q} \cup \tilde{\mathbf{y}} ) (B_R(y_j))^{2L}\right] \d \tilde{\mathbf{y}}\\  
 &\leq C|A_1|,
\end{align*}
where $\tilde{\mathbf{y}} = (\mathbf{y}_1,y_{p+1},\ldots,y_{p+l})$. The last inequality follows from the H\"{o}lder inequality and boundedness of moments for the Poisson process.

The above shows that
\begin{align*}
&\bigg|\int_{A} ( m_n^{L_1,\ldots,L_k}(\mathbf{y},\PP_n) - m_n^{L_1,\ldots,L_p}(\mathbf{y}_1,\PP_n)m_n^{L_{p+1},\ldots,L_k}(\mathbf{y}_2 ,\PP_n) )\d \mathbf{y} \bigg|\\
&\quad \leq |A|C\exp(-\dist(\xx_1,\xx_2)/c).
\end{align*}
Finally, (ii) is shown by the Lebegue
differentiation theorem when $A$ tends to $\xx$.

\end{proof}

\subsection{Proof of Lemma \ref{c4bound}}\label{sec:c4 lem proof}

\begin{proof}[Proof of Lemma \ref{c4bound}]
According to~\eqref{cummesdef} and~\eqref{eq:cumulant decomposition} with  $k=4$ and  $\mathbf{f}= \mathds{1}_{W_{n}^{(1,2)}}$, we have
\begin{align*}
  c^4(n\bar{K}_{e,n}(I_1), & n\bar{K}_{e,n}(I_1), n\bar{K}_{e,n}(I_2),n\bar{K}_{e,n}(I_2)) \\
  &= c^4(\mu_n(W_n\times \{1\}),\mu_n(W_n\times \{1\}),\mu_n(W_n\times \{2\}),\mu_n(W_n\times \{2\}))\\
  & =\int_{(\bR^d)^4} \mathds{1}_{W_n^{(1,2)}} \d c_n^4\\
                     &=
                       \int_{\Delta} \mathds{1}_{W_{n}^{(1,2)}}(\bxx) \mathrm{d} c_{n}^{4}(\bxx) 
                       + 
                       \sum_{S,T \preceq \{1,\ldots,4\}}
                       \int_{\sigma(S, T)}  \mathds{1}_{W_{n}^{(1,2)}}(\bxx) \mathrm 
                       dc_{n}^{4}(\bxx).
\end{align*}

We first consider the diagonal term.
  In~\eqref{eq:cumulant xi n}, 
 only the term $p=1$ contributes on $\Delta$, i.e. $(c_n^4)_{| \Delta} = (M^4_n)_{|\Delta}$. Also in \eqref{eq:density mixed moment}, only the term $p=1$ contributes on $\Delta$ such that $(c_n^4)_{| \Delta} = m_n^{\{1,2,3,4\}} \d \bxx_{\{1,2,3,4\}}$.  Inserting the definition of the score function \eqref{augmentedxi}, we get
\begin{align*}
\int_{\Delta} \mathds{1}_{W_{n}^{(1,2)}}(\bxx) \mathrm{d}
c_{n}^{4}(\bxx)
&=\int_{W_n}  \E_{x}\big[ \xi_{e,n,I_1}(x,\PP_n)^2   
\xi_{e,n,I_{2}}(x, \PPn)^2   \big]\rho\d x \\
&\leq nC \E_{o}\big[ \xi_{e_{1,n},n,I_1}(o,\PP)^2   
\xi_{e_{1,n},n,I_{2}}(o, \PP)^2   \big] \rho\\
&\leq nC'|I_1||I_2|.
\end{align*}
The first inequality follows because the edge correction 
factors are bounded and hence there is a $C>0$ such that $\xi_{e,n,I_i}\leq C\xi_{e_{1,n},n,I_i}$, $i=1,2$, and the uncorrected score function $\xi_{e_{1,n},n,I_i}$ is translation invariant.
The last inequality used the special case $T=\{1,2,3,4\}$ and $p=1$ in Lemma~\ref{ximomentbound}.

It remains to bound the terms
\begin{equation*}
 \int_{\sigma(S, T)} \mathds{1}_{W_{n}^{(1,2)}}(\bxx) \mathrm
dc_{n}^{4}(\bxx).
\end{equation*}
To that end, fix a non-trivial partition
 $\{S,T\}$ of $\{1,2,3,4\}$. Set $K=(|I_1||I_2|)^{-1/(12d)}$ and
 let $D_{K}=\{ \bxx \in W_{n}^{(1,2)} \cap \sigma(S,T) : D(\bxx)
 > K \}$. We study the integral over $D_{K}$ and $D_{K}^{c}$
 separately.

We first consider the integral over $D_K$. By~\eqref{eq:semi
  cluster decomposition 1}-\eqref{eq:semi cluster decomposition
  2}, this is bounded by integrals of the form
\begin{equation*}
  \int_{D_K}
  \big(m_n^{S'_1, \dots, S_{q}',T'_1, \dots, T_{r}'}(\bxx_{S'\cup T'})
  -
  m_n^{S'_1, \dots, S_{q}'}(\bxx_{S'}) m_n^{T'_1, \dots, T_{r}'}(\bxx_{T'})  \big) \notag 
  \bar{\mathbf{d}} \bxx_{S'\cup T'}
  \d M^{T_{1}} \ldots \d M^{T_{p}},
\end{equation*}
where $S'\subseteq S$, $T'\subseteq T$, $S'_1, \dots, S_{q}'$ is
a partition of $S'$, $T'_1, \dots, T_{r}'$ is a partition of
$T'$, and $S'\cup T', T_{1},\ldots T_{p}$ is a partition of
$\{1,2,3,4\}$.

To bound this, we obtain from (ii) in Lemma
\ref{lem:fast_xi}  a fast decreasing function
$\omega$ not depending on $n$, $I_1$, and $I_2$ such that
\begin{equation*}
   \big|m_n^{S'_1, \dots, S_{q}',T'_1, \dots, T_{r}'}(\bxx_{S'\cup T'})
  -
  m_n^{S'_1, \dots, S_{q}'}(\bxx_{S'}) m_n^{T'_1, \dots, T_{r}'}(\bxx_{T'}) \big|
  \leq
   \omega \left( D(\bxx_{S'\cup T'}) \right).
\end{equation*}

Let $m=12d$. 
By Definition~\ref{def:expdecay}, there exists a constant
$c'>0$ such that $\omega(t) \leq {c'}{t^{-m}}$ so that
\begin{equation*}
  \int_{D_{K}}
  \omega\left( D(\bxx_{S'\cup T'})  \right)
  \bar{\mathbf{d}} \bxx_{S'\cup T'}
  \d M^{T_{1}} \dotsm \d M^{T_{p}}
  \leq
  c'
  \int_{D_{K}}
  \frac{\bar{\mathbf{d}} \bxx_{S'\cup T'}}{D(\bxx_{S'\cup T'})^{m}} 
  \d M^{T_{1}} \dotsm \d M^{T_{p}}.
\end{equation*}

Since we are on $\sigma(S,T)$, $D(\bxx_{S'\cup T'}) \geq \dist(\bxx_{S'},\bxx_{T'})\geq \dist(\bxx_S,\bxx_T)$. Moreover, noting that $1\leq |S|,|T|\leq 3$, all points in $\bxx_{S}$ and $\bxx_{T}$, respectively, must be within distance $2\dist(\bxx_{S},\bxx_{T} ) $ of each other.

Letting $\delta(\bxx)$ denote the maximal pairwise distance between points in $\bxx$, we get 
\begin{align}\label{eq:ugly_int}
&\int_{D_{K}}
\frac{\bar{\mathbf{d}} \bxx_{S'\cup T'}}{D(\bxx_{S'\cup T'})^{m}} 
\d M^{T_{1}} \dotsm \d M^{T_{p}}\\
&\quad =
\int_{D_{K}}
\frac{1_{\{\delta(\bxx_S),\delta(\bxx_T)\leq 2 \dist(\bxx_S,\bxx_T)\}}}{\dist(\bxx_{S},\bxx_{ T})^{m}} \bar{\mathbf{d}} \bxx_{S'\cup T' }
\d M^{T_{1}} \dotsm \d M^{T_{p}}.\nonumber
\end{align}
 We now recall the decomposition \eqref{eq:density mixed moment}
\begin{equation}
\d M^{T_i}
=
\sum_{T_1'', \dots, T_{p'}'' \preceq T_i }
m_n^{T''_1, \dots, T_{p'}''} ( \bxx )
\bar{\d} \bxx_{T_1''} \cdots \bar{\d} \bxx_{T_{p'}''}. 
\end{equation}
We note that on $\sigma(S,T)$ only terms with all $T_j''\subseteq S$ or $T_j''\subseteq T$ contribute since the measure $\bar{\d} \bxx_{T_j''}$ is concentrated on the diagonal of $\bR^{|T_j''|}$.
Moreover, we know from Lemma \ref{lem:fast_xi} that all $m_n^{T_1'',\ldots,T_s''}$ with $\sum_i |T_i''|\leq 3$  are bounded by a constant.

Letting  $\boldsymbol{x}=\xx_{S'}=(x_1,\ldots,x_q)$ , $\boldsymbol{y}=\xx_{T'}=(y_1,\ldots,y_r)$, $\boldsymbol{z}=(z_1,\ldots,z_s)$, and $\boldsymbol{w}=(w_1,\ldots,w_t)$, $2\leq q+r+s+t\leq 4$, the integral \eqref{eq:ugly_int} is thus bounded by terms of the form  
\begin{align*}
C\int_{W_n^{t+s}}\int_{W_n^{r+q}}
\frac{1_{\{\dist((\boldsymbol{x},\boldsymbol{z}),(\boldsymbol{y},\boldsymbol{w}))> K\}}1_{\{\delta(\boldsymbol{x},\boldsymbol{z}),\delta(\boldsymbol{y},\boldsymbol{w})\leq 2\dist((\boldsymbol{x},\boldsymbol{z}),(\boldsymbol{y},\boldsymbol{w}) )\} }}{\dist((\boldsymbol{x},\boldsymbol{z}),(\boldsymbol{y},\boldsymbol{w}))^m} \d \boldsymbol{x} \d \boldsymbol{y}
\d \boldsymbol{z} \d \boldsymbol{w}.
\end{align*}
On the set where $\dist((\boldsymbol{x},\boldsymbol{z}),(\boldsymbol{y},\boldsymbol{w}))$ is attained  as $|x_1 - y_1|$, the integral is bounded by
\begin{align*}
&C\int_{W_n^{t+s+r+q}}
\frac{1_{\{|x_1-y_1|> K\}}1_{\{\max\{|x_1-x_i|,|x_1-z_j|,|y_1-y_k|,|y_1-w_l|\}\leq 2 |x_1-y_1|\} }}{|x_1-y_1|^m} \d \boldsymbol{x} \d \boldsymbol{y}
\d \boldsymbol{z} \d \boldsymbol{w}\\
&\leq C'\int_{W_n^{2}}
\frac{1_{\{|x_1-y_1|> K\}}}{|x_1-y_1|^{m-d(s+t+q+r-2)}} \d x_1
\d y_1 \\
&	\leq C''|W_n| K^{d(3-s-t-q-r)-m}\\
&\leq C''n(|I_1||I_2||)^{3/4},
\end{align*}
where the maximum is taken over $i=1,\ldots,q, j=1,\ldots, s,k=1,\ldots,r,
l=1,\ldots,t$.
The remaining integrals are bounded similarly. 

On $D_K^c$, each of the four points in $\bxx\in (\bR^d)^4$ must be within 
distance $3K$ from each other. Thus,
\begin{equation*}
c^4_n(W_n^{(1,2)}\cap D_K^c) \leq \int_{W_n^{(1,2)}} \mathds{1}_{B_{3K}(x_1)^3}(x_2,x_3,x_4) c_n^4(\d \bxx).
\end{equation*}
Using the decomposition \eqref{eq:cumulant xi n} and
\eqref{eq:density mixed moment} and stationarity, we obtain a
factorization of this integral into factors of the
same form as the ones in Lemma
\ref{ximomentbound}. This yields the bound  
\begin{equation*}
c^4_n(W_n^{1,2}\cap D_K^c) \leq C n K^{3d} |I_1||I_2| =  C n ( |I_1||I_2|)^{3/4}.
\end{equation*}

\end{proof}

\bibliographystyle{plainnat}


\appendix
\section{Results on conditional variances}\label{app:condvar}
This appendix contains two lemmas referred to in the proof of
Proposition~\ref{prop:variance lower bound}.
\begin{lem}\label{23Lem}
	Let $Y$ be a square integrable random variable and let $\sigma$ be a  $\sigma$-algebra. Let $I_1, I_2$
	be Borel subsets of $\R$. Then,
	\begin{equation*}
	\Var(Y|\sigma) \geq \frac{1}{4} \min_{i \in \{1,2\}} \P(Y \in I_i|\sigma)\inf_{x_1 \in I_1, x_2 \in I_2}|x_1 - x_2|^2.
	\end{equation*}
\end{lem}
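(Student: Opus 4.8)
The plan is to argue conditionally on $\sigma$, using that the conditional mean $\E[Y\mid\sigma]$ cannot be close to both $I_1$ and $I_2$ at once when the two sets are far apart. Write $\mu=\E[Y\mid\sigma]$, which is $\sigma$-measurable, and set $d=\inf_{x_1\in I_1,\,x_2\in I_2}|x_1-x_2|$. If $d=0$ the right-hand side vanishes and the inequality is trivial, so I assume $d>0$. For $t\in\R$ write $\dist(t,I_i)=\inf_{x\in I_i}|t-x|$.

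The first step is a one-sided conditional second-moment bound. Because $Y\in I_i$ forces $|Y-\mu|\ge\dist(\mu,I_i)$, and because $\dist(\mu,I_i)$ is $\sigma$-measurable and hence may be pulled out of the conditional expectation, I would write, for each $i\in\{1,2\}$,
\begin{align*}
\Var(Y\mid\sigma)
&=\E\big[(Y-\mu)^2\mid\sigma\big]
\ge \E\big[(Y-\mu)^2\mathds{1}_{\{Y\in I_i\}}\mid\sigma\big]\\
&\ge \dist(\mu,I_i)^2\,\P(Y\in I_i\mid\sigma).
\end{align*}

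The second step shows the prefactor $\dist(\mu,I_i)^2$ is large for at least one $i$. For any $x_1\in I_1$, $x_2\in I_2$ the triangle inequality gives $|\mu-x_1|+|\mu-x_2|\ge|x_1-x_2|\ge d$; taking the infimum first over $x_1$ and then over $x_2$ yields $\dist(\mu,I_1)+\dist(\mu,I_2)\ge d$, so $\max_i\dist(\mu,I_i)\ge d/2$. The event $A=\{\dist(\mu,I_1)\ge d/2\}$ is $\sigma$-measurable; on $A$ the bound of the previous step with $i=1$ gives $\Var(Y\mid\sigma)\ge (d^2/4)\,\P(Y\in I_1\mid\sigma)$, while on $A^c$ we have $\dist(\mu,I_2)\ge d/2$, so the bound with $i=2$ gives $\Var(Y\mid\sigma)\ge (d^2/4)\,\P(Y\in I_2\mid\sigma)$. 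In both cases the right-hand side is at least $(d^2/4)\min_{i\in\{1,2\}}\P(Y\in I_i\mid\sigma)$, which is the asserted inequality.

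I do not expect any genuine obstacle: the whole argument is an elementary conditional second-moment estimate. The only points needing mild care are the $\sigma$-measurability of $\mu$ and of $\dist(\mu,I_i)$, which is what licenses pulling the prefactor out of the conditional expectation, and the handling of the degenerate case $d=0$, both of which are immediate.
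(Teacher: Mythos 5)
Your proof is correct, and it is precisely the natural conditionalization of the argument behind the unconditional statement: the paper gives no proof of this lemma, saying only that it is ``a conditional version of [Lemma 2.3]'' of Xia and Yukich, whose proof runs on the same second-moment estimate around the mean that you use around $\E[Y\mid\sigma]$. The measurability points you flag (pulling the $\sigma$-measurable factor $\dist(\mu,I_i)^2$ out of the conditional expectation, and splitting on the $\sigma$-measurable event $\{\dist(\mu,I_1)\ge d/2\}$) are exactly the right details to supply, so your write-up in fact fills the gap the paper leaves to the reader.
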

This is a conditional version of \cite[Lemma 2.3]{gibbsCLT}. 
The second lemma contains some results on conditional variances. The proofs are included for completeness. 

\begin{lem}\label{lemma:ineq conditional variance}
	For any square integrable random variables $X$ and $Y$ and $\sigma$-algebras
	$\sigma_{1}$, $\sigma_{2}$ such that $\sigma_{1} \subset \sigma_{2}$, the following holds:
	\begin{enumerate}
		\item \label{varineq} 
		\begin{equation*}
		\E(\Var(X|\sigma_{2})) \leq  \E(\Var(X|\sigma_{1})).
		\end{equation*}
		\item \label{vareq} If $Y$ is measurable with respect to $\sigma_1$, then
		\begin{equation*}
		\Var(X+Y|\sigma_{1}) =  \Var(X|\sigma_{1}).
		\end{equation*}
	\end{enumerate}	
\end{lem}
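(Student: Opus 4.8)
The plan is to treat the two parts separately, in both cases reducing everything to elementary properties of conditional expectation; both are classical facts, so the work is in organizing the identities cleanly rather than in overcoming a genuine obstacle.

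For part \ref{varineq}, I would start from the identity $\Var(X\mid\sigma_i) = \E(X^2\mid\sigma_i) - \E(X\mid\sigma_i)^2$ for $i=1,2$. Taking expectations and using the tower property $\E(\E(X^2\mid\sigma_i))=\E(X^2)$ gives
\begin{equation*}
\E(\Var(X\mid\sigma_i)) = \E(X^2) - \E\big(\E(X\mid\sigma_i)^2\big),
\end{equation*}
so the claimed inequality $\E(\Var(X\mid\sigma_2))\leq \E(\Var(X\mid\sigma_1))$ is equivalent to $\E(\E(X\mid\sigma_1)^2)\leq \E(\E(X\mid\sigma_2)^2)$. To establish the latter, I use that $\sigma_1\subseteq\sigma_2$ yields $\E(X\mid\sigma_1)=\E(\E(X\mid\sigma_2)\mid\sigma_1)$ by the tower property, and then apply the conditional Jensen inequality to the convex map $t\mapsto t^2$ to obtain $\E(X\mid\sigma_1)^2\leq \E(\E(X\mid\sigma_2)^2\mid\sigma_1)$ almost surely. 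Taking expectations and invoking the tower property once more closes this part. An equivalent route is to use the conditional law of total variance $\Var(X\mid\sigma_1)=\E(\Var(X\mid\sigma_2)\mid\sigma_1)+\Var(\E(X\mid\sigma_2)\mid\sigma_1)$ and take expectations, discarding the nonnegative second term.

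For part \ref{vareq}, the key observation is that $\sigma_1$-measurability of $Y$ lets it be pulled out of the conditional expectation, so that $\E(X+Y\mid\sigma_1)=\E(X\mid\sigma_1)+Y$. Subtracting, the additive term $Y$ cancels and
\begin{equation*}
(X+Y)-\E(X+Y\mid\sigma_1) = X-\E(X\mid\sigma_1).
\end{equation*}
Squaring and taking the conditional expectation given $\sigma_1$ then yields $\Var(X+Y\mid\sigma_1)=\Var(X\mid\sigma_1)$ directly, with no inequality needed.

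The only step requiring any care is the Jensen estimate in part \ref{varineq}, where one must invoke the \emph{conditional} Jensen inequality rather than the unconditional one; apart from this the argument is entirely routine.
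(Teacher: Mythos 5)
Your proof is correct. Part \ref{vareq} is identical to the paper's argument: pull the $\sigma_1$-measurable $Y$ out of the conditional expectation, observe the cancellation $(X+Y)-\E(X+Y\mid\sigma_1)=X-\E(X\mid\sigma_1)$, and square. For part \ref{varineq} you take a slightly different route from the paper. The paper applies the (unconditional) law of total variance twice: once to $X_2=\E(X\mid\sigma_2)$ with respect to $\sigma_1$ to deduce $\Var(X_1)\leq\Var(X_2)$, and once to $X$ with respect to each $\sigma_i$ to convert that into the claimed inequality on $\E(\Var(X\mid\sigma_i))$. You instead expand $\E(\Var(X\mid\sigma_i))=\E(X^2)-\E\big(\E(X\mid\sigma_i)^2\big)$ and reduce the claim to the monotonicity of $\E\big(\E(X\mid\sigma_i)^2\big)$ in the $\sigma$-algebra, which you obtain from the tower property plus conditional Jensen. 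The two arguments are really establishing the same intermediate fact — since $\E X_1=\E X_2=\E X$, the paper's inequality $\Var(X_1)\leq\Var(X_2)$ is exactly your $\E(X_1^2)\leq\E(X_2^2)$ — but your mechanism (conditional Jensen applied to $t\mapsto t^2$) replaces the paper's second invocation of the total-variance decomposition. Both are equally elementary and fully rigorous under the square-integrability hypothesis; your version arguably makes the role of $L^2$-projection more transparent, while the paper's stays entirely within variance identities. The alternative route you sketch via the conditional law of total variance is essentially the paper's argument in conditional form.
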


\begin{proof}
	First we prove \ref{varineq}. For $i=1,2$, we let $X_{i}=\E(X|\sigma_{i})$. Then, the law of total variance applied to $X_2$ yields
	\begin{equation*}
	\Var(X_2) = \E \Var(X_2 | \sigma_1) + \Var (\E(X_2|\sigma_1)) = \E \Var(X_2 | \sigma_1) + \Var (X_1).
	\end{equation*}
	It follows, that $\Var(X_1)\leq \Var(X_2)$. The result now follows from the law of total variance applied to $X$:
	\begin{equation*}
	\Var(X) = \E \Var(X | \sigma_1) + \Var (X_1) = \E \Var(X | \sigma_2) + \Var (X_2) .
	\end{equation*}
	
	To prove \ref{vareq}, we compute 
	\begin{align*}
	\Var(X+Y |\sigma_1){}&= \E( (X+Y - \E(X+Y|\sigma_1))^2  | \sigma_1)\\
	&= \E ( (X - \E(X|\sigma_1))^2 |\sigma_1)\\
	& =\Var(X|\sigma_1).
	\end{align*}
\end{proof}

\section{Appendix on Gibbs point processes}\label{app:thinning}

The infinite volume Gibbs point process $\PP$ of class $\mathbf{\Psi}^*$ 
with energy functional $\Psi$,  activity $\tau$ satisfying \eqref{eq:tau_condition}, and inverse temperature $\beta$ was introduced in Section \ref{sec:gibbs} as the point process satisfying \eqref{eq:gibbs_def}. Here we first relate the infinite volume Gibbs point process to the finite volume case and then we recall the perfect simulation construction from \cite{gibbs_limit}.

Consider a finite window $D$. The finite volume Gibbs process $\PP_{D}$ on $D$ is absolutely continuous with respect to an intensity $\tau$ Poisson point process $Q $ on $D$ with density
\begin{equation*}
\X \mapsto \frac{\exp(-\beta \Psi(\X ))}{ \E [\exp(-\beta \Psi(Q\cap D ))]}.
\end{equation*}

Define the stationarization $\bar{\PP}_{W_n}$ of $\PP_{W_n}$ to be the point process satisfying
\begin{equation*}
\int f(\X) \bar{\PP}_{W_n}(\d \X) =\frac{1}{|W_n|} \int_{W_n} \int f(\X + u) {\PP}_{W_n}(\d \X)\d u. 
\end{equation*} 
Since the infinite volume Gibbs process is unique when \eqref{eq:tau_condition} holds, the sequence $(\bar{\PP}_{W_n})_{n\geq 1}$ converges in the local convergence topology to an infinite volume Gibbs process, see \cite{dereudre}. The construction below yields a construction of $({\PP}_{W_n})_{n\geq 1}$ that converges to the infinite Gibbs process.

\subsection{Perfect simulation of Gibbs point processes}

We now recall a construction of the infinite Gibbs process $\PP$
by thinning a free birth-death process $\gamma(t)$ on $\R^d\times
\R$ of birth intensity $\tau$ and death intensity 1. This
construction was introduced in \cite{ferrari} for the area
interaction process and generalized to a larger class of Gibbs
point processes in \cite{gibbs_limit}. Since we only
consider energy functionals of finite range, we only need a
simplified version of the construction presented
in~\cite{gibbs_limit} which we review below.

We start by constructing a birth-death process
$\gamma_D^{\Psi}(t)$ on $D\times \R$ where $D$ is a bounded
domain. Let $\gamma_D(t)$ be the restriction of the free
birth-death process $\gamma(t)$ to $D\times \R$. Every time a
point in $\gamma_D(t)$ is born, we accept it with a certain
probability. We define $\gamma^{\Psi}_D(t)$ as the
birth-death process formed only by the points that are accepted when they are born.
 The probability of
accepting a point $x\in D$ born at time $t$ is $\exp(-\beta
\Delta^{\Psi}(x,\gamma^{\Psi}_D(t-)\cap B_{r^\Psi}(x)))$  where
$\gamma^{\Psi}_D(t-)$ are the accepted points still alive just
before time $t$. This construction is well-defined, since there
is almost surely a $t'<t$ such that
$\gamma_D(t')=\emptyset$. For a fixed $t$, $\gamma^{\Psi}_D(t)$
yields a point process with the same distribution as $\PP_{D}$,
see e.g.\  \cite{kendall}. 

To extend this to an infinite volume process, we define the
ancestors of a point $x$ in $\gamma$ born at time $t$ by
$A(x)=\gamma(t-)\cap B_{r^\Psi}(x)$. These are all points that
(if accepted) could influence the acceptance probability of $x$.
The clan of ancestors $\mathbf{A}(x)$ consists of $x$, its
ancestors, all ancestors of ancestors etc. This is the set of all
points in the free birth-death $\gamma$ process whose acceptance
status could possibly affect the acceptance of $x$.

We first note that all ancestor clans $\mathbf{A}(x)$ are
almost surely finite. Here,  the idea of
\cite{fernandez} was to dominate the clan of ancestors by a
Galton-Watson branching process, where each split has a Poisson
distributed number of branches with mean $\lambda=\tau \kappa_d
(r^{\Psi})^d$. By standard branching process theory \cite[Chapter
1.6]{harris}, the branches die out almost
surely if $\lambda<1$, which is ensured by
\eqref{eq:tau_condition}. This ensures that the thinning
procedure above can also be applied to the full free birth-death
process $\gamma(t)$ resulting in the process $ \gamma^{\Psi}(t)$.

Next, we consider the spatial diameter of
$\mathbf{B}(x)=proj(\mathbf{A}(x))\oplus B_{r^\Psi}(o)$, where
$proj(\mathbf{A}(x))$ is the projection of the ancestor clan to
$\R^d$, such that $\mathbf{B}(x)$ is the projection to $\R^d$ of
all the balls we need to search in order to determine the
acceptance probability of $x$.  Let $Z_{n}$ be the number of
branches in the $n$th generation of the dominating branching
process. Then 
\begin{equation*}
P(diam(\mathbf{B}(x)) > 2kr^{\Psi}) \leq P(Z_k >0) \leq \E(Z_k)= \lambda^k, 
\end{equation*}
the latter equality coming from \cite[Chap. 1, Thm. 5.1]{harris}.

Similarly, define $\mathbf{B}_D(0)=\bigcup_{x\in \gamma_D(0)}
\mathbf{B}(x)$ to  the projection to $\R^d$ of all balls around
ancestors of points in $\gamma_D(t)$. Then,
\begin{equation} \label{eq:gibbs_ancestor}
P(\mathbf{B}_D(0) \cap (D\oplus B_{2kr^{\Psi}}(o))^c \neq \emptyset) \leq \E \sum_{x\in  \gamma_D(0)} \mathds{1}_{\{diam(\mathbf{B}(x))> 2kr^{\Psi}\}} \leq |D| \tau \lambda^k.
\end{equation}
This means that for any bounded $D$, $\PP_{W_n}\cap
D=\gamma_{W_n}^{\Psi}(0)\cap D$ coincides with
$\gamma^{\Psi}(0)\cap D$ with a probability that tends to 1
exponentially fast for $n\to \infty$. This implies the
convergence in the local bounded topology of $\PP_{W_n}$ (and
$\bar{\PP}_{W_n}$) to $\gamma^{\Psi}(0)$, which must thus be the
infinite volume Gibbs process.

\end{document}